\let\oldmarginpar\marginpar
\renewcommand\marginpar[1]{\-\oldmarginpar[\raggedleft\footnotesize #1]%
{\raggedright\footnotesize #1}} %footnotesize
\newtheorem{thm}{Theorem}
\newtheorem{prop}[thm]{Proposition}
\newtheorem{cor}[thm]{Corollary}
\newtheorem{conj}[thm]{Conjecture}
\newtheorem{lemma}[thm]{Lemma}
\newtheorem{claim}[thm]{Claim}
\newtheorem{fact}[thm]{Fact}
\theoremstyle{definition}
\crefname{defi}{Definition}{Definitions}
\theoremstyle{remark}
\newtheorem{rk}{Remark}
\newcounter{cas}
\newtheoremstyle{assert}
  {.5\baselineskip±.2\baselineskip}   % ABOVESPACE
  {.5\baselineskip±.2\baselineskip}   % BELOWSPACE
  {\itshape}  % BODYFONT
  {0pt}       % INDENT (empty value is the same as 0pt)
  {\bfseries} % HEADFONT
  {.}         % HEADPUNCT
  {5pt plus 1pt minus 1pt} % HEADSPACE
  {(\thmnumber{#2})}          % CUSTOM-HEAD-SPEC
\theoremstyle{assert}
\newcommand{\restrict}[2]{{#1}_{\left|#2\right.}}
\newcommand{\bsig}{{\bm{\sigma}}}
\newcommand{\bi}{\mathbf{i_0}}
\newcommand{\bA}{\mathbf{A}}
\newcommand{\bC}{\mathbf{C}}
\newcommand{\bS}{\mathbf{S}}
\newcommand{\bX}{\mathbf{X}}
\newcommand{\bY}{\mathbf{Y}}
\newcommand{\bZ}{\mathbf{Z}}
\newcommand{\W}{\mathcal{W}}
\newcommand{\cN}{\mathcal{N}}
\newcommand{\bU}{\mathbf{U}}
\newcommand{\C}{\mathscr{C}}
\newcommand{\pth}[1]{\left(#1 \right)}
\newcommand{\ceil}[1]{\left\lceil #1 \right\rceil}
\newcommand{\pr}[1]{\mathbb{P}\left[ #1 \right]}
\newcommand{\esp}[1]{\mathbb{E}\left[ #1 \right]}
\newcommand{\midbar}{\;\middle|\;}
\newcommand{\sst}[2]{\left\{#1\,:\,#2\right\}}
\newcommand{\threshold}[3]{\langle #1\;?\; #2 : #3 \rangle}
\newcommand{\odd}{\chi_o}
\newcommand{\pcf}{\chi_{\rm pcf}}
\newcommand{\bigO}[1]{\mathcal{O}\pth{#1}}
\newcommand{\bigo}[1]{\mathcal{O}(#1)}
\tikzstyle{vertex} = [draw,fill,shape=circle,node distance=80pt]
\tikzstyle{wertex} = [draw=black,fill=white,shape=circle,node distance=80pt]
\tikzstyle{gertex} = [draw=black,fill=black!25,shape=circle,node distance=80pt]
\tikzstyle{edge} = [fill,opacity=.5,fill opacity=.5,line cap=round, line join=round, line width=50pt]
\newcommand{\newpar}[1]{%
    \par
    \addvspace{\medskipamount}
    \noindent\textit{#1\@addpunct{.}}\enspace\ignorespaces
    }
\title{New bounds for proper $h$-conflict-free colourings}
\author{Quentin Chuet}
\address{\'Equipe GALaC, LISN (Université Paris-Saclay),
Gif sur Yvette, France.}
\email{quentin.chuet@lisn.fr}
\author{Tianjiao Dai}
\address{School of Mathematics, (East China University of Science and Technology), 
Shanghai, 200237, China. Supported by Natural Science Foundation of Shandong Province (ZR2024QA174).}
\email{tianjiao.dai@sdu.edu.cn}
\author{Qiancheng Ouyang}
\address{Data Science Institute (Shandong University ),
Jinan, China. Supported by Natural Science Foundation of Shandong Province(ZR2024QA107)}
\email{oyqc@sdu.edu.cn}
\author{François Pirot}
\address{\'Equipe GALaC, LISN (Université Paris-Saclay),
Gif sur Yvette, France.}
\email{francois.pirot@lisn.fr}
\begin{document}

\begin{abstract}
A proper $k$-colouring of a graph $G$ is called $h$-conflict-free if every vertex $v$ has at least $\min\, \{h, \deg(v)\}$ colours appearing exactly once in its neighbourhood. Let $\pcf^h(G)$ denote the minimum $k$ such that such a colouring exists. We show that for every fixed $h\ge 1$, every graph $G$ of maximum degree $\Delta$ satisfies $\chi_{\rm{pcf}}^h(G) \le h\Delta + \mathcal{O}(h\log \Delta)$. This expands on the work of Cho \emph{et al.}, and improves a recent result of Liu and Reed in the case $h=1$. We conjecture that for every $h\ge 1$ and every graph $G$ of maximum degree $\Delta$ sufficiently large, the bound $\pcf^h(G) \le h\Delta + 1$ should hold, which would be tight.
When the minimum degree $\delta$ of $G$ is sufficiently large, namely $\delta \ge \max\{100h, 2000\log \Delta\}$, we show that this upper bound can be further reduced to $\chi_{\rm{pcf}}^h(G) \le \Delta + \bigo{\sqrt{h\Delta}}$. This improves a recent bound from Kamyczura and Przyby\l o when $\delta \le \sqrt{h\Delta}$. 
\end{abstract}

\maketitle

\section{Introduction}

For a positive integer $k$, let us denote $[k] \coloneqq \{1,\dots,k\}$ the set of the first $k$ positive integers. A \emph{$k$-colouring} of a graph $G$ is an assignment $\sigma\colon V(G) \rightarrow [k] $. A colouring $\sigma$ of $G$ is \emph{proper} if $\sigma(u) \neq \sigma(v)$ for every edge $uv \in E(G)$. Given a colouring $\sigma$ and a vertex $v \in V(G)$, a \emph{witness} of $v$ is a neighbour $u \in N(v)$ such that the colour $\sigma(u)$ appears exactly once in $N(v)$; we say that $\sigma(u)$ is a \emph{solitary colour} of $v$ in $\sigma$.
The colouring $\sigma$ is \emph{proper conflict-free} (or simply \emph{pcf}) if it is proper and every non-isolated vertex $v$ has a witness. We let $\pcf(G)$ be the smallest integer $k$ such that a pcf $k$-colouring of $G$ exists. 
% The smallest integer $k$ such that a pcf $k$-colouring of $G$ exists is denoted $\pcf(G)$. 
The notion of pcf colouring was introduced by Fabrici \emph{et al.} \cite{fabrici2023proper}, and was first investigated in the class of graphs of maximum degree $\Delta$ by Caro, Petru\v{s}evski and \v{S}krekovski \cite{caro2023remarks}. Among other results, they obtained the upper bound $\pcf(G) \le 5\Delta(G)/2$, and conjectured that the optimal upper bound should actually be $\Delta(G)+1$.

\begin{conj}[Caro, Petru\v{s}evski, \v{S}krekovski {\cite[Conjecture 6.4]{caro2023remarks}}]\label{conj:pcf}
    If $G$ is a connected graph of maximum degree $\Delta\ge 3$, then $\pcf(G)\le \Delta+1$.
\end{conj}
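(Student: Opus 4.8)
The plan is to prove the above conjecture through the paradigm that yields the other tight ``degree plus one'' colouring theorems, Brooks and Vizing: pass to a minimal counterexample, squeeze rigid structure out of it, and then destroy it with a local recolouring (Kempe/Vizing-chain) argument driven by a potential. First I would fix a connected counterexample $G$ with $\pcf(G) > \Delta+1$ and $\abs{V(G)}$ minimum, and verify that such a $G$ must be $2$-connected and of large minimum degree: any vertex of degree $< \Delta$, and more generally any ``missing colour'' in a neighbourhood, is a reservoir of slack that the chain argument can spend, so the difficulty concentrates entirely on the (near-)$\Delta$-regular, locally dense regime. The sporadic small cases $3 \le \Delta \le \Delta_0$ (for a suitable constant $\Delta_0$) I would peel off and settle separately, as is unavoidable for Brooks-type statements; the substance is the large-$\Delta$ case.

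The reformulation I would carry throughout is to build not merely a proper $(\Delta+1)$-colouring $\sigma$ but a pair $(\sigma, w)$, where $w$ assigns to every non-isolated $v$ a \emph{witness} $w(v) \in N(v)$ whose colour is solitary in $N(v)$; call $v$ \emph{deficient} when no witness exists, i.e.\ every colour occurs $0$ or $\ge 2$ times in $N(v)$. The useful arithmetic is that a deficient vertex of degree $d$ partitions its neighbours into colour classes of size $\ge 2$, so it uses at most $\lfloor d/2 \rfloor \le \Delta/2$ distinct colours; in particular there is always a colour $b \in [\Delta+1]$ \emph{missing} from $N(v)$. This is exactly the hook the repair needs. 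Starting from any proper $(\Delta+1)$-colouring (greedily available) I would repair deficient vertices one at a time as follows: at a deficient $v$ pick a colour $a$ occurring twice, say at $x_1, x_2 \in N(v)$, and recolour $x_2$ from $a$ to the missing colour $b$, which makes $a$ solitary at $v$. To keep properness globally and to avoid re-creating an $a$ elsewhere in $N(v)$ I would perform this as the $(a,b)$-Kempe swap on the component of $x_2$; since $b$ is absent from $N(v)$, no neighbour of $v$ turns into $a$, and the swap isolates $a$ precisely when $x_1$ and $x_2$ lie in distinct $(a,b)$-components. The lone obstruction, a single $(a,b)$-chain joining $x_1$ to $x_2$, is the familiar Vizing configuration, to be resolved by rotating a fan at $v$ or switching to another doubled colour.

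The heart of the argument is controlling side effects, since a swap that fixes $v$ may turn a neighbour $u$ deficient, and naive iteration need not terminate. The plan is to endow the process with a potential (a weighted count of deficient vertices, the weights arranged to decay along the chain) that a correctly chosen swap strictly decreases, and to prove that the slack identified in the first step, missing colours at low-degree vertices and at deficient vertices themselves, is enough to absorb the collateral damage. For the genuinely rigid core I would run a discharging argument over the $\Delta$-regular, triangle-rich clusters to show that a deficient vertex cannot be enclosed solely by equally tight deficient vertices: some vertex within reach must carry a missing colour or an odd colour class, furnishing a safe endpoint at which the chain terminates without creating a new deficiency. Chaining this ``nearby flexibility'' to the monotone potential should contradict minimality and thereby establish $\pcf(G) \le \Delta+1$.

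The hard part, and the reason the conjecture is still open, is the exact tightness: with only $\Delta+1$ colours there is no probabilistic room, so the Lovász-Local-Lemma route that delivers $\Delta + \bigo{\log \Delta}$ cannot be sharpened to the additive constant $1$, and every augmenting argument known to me loses at least a constant precisely in the $\Delta$-regular, locally dense case where colour classes in a neighbourhood are forced to pair up. Converting the heuristic ``a nearby vertex is flexible'' into a rigorous, globally terminating recolouring, or, as an alternative, replacing the potential by an entropy-compression / algorithmic-local-lemma argument with a bespoke flaw set engineered so that repairing $v$ provably cannot cascade, is the decisive step I would have to push through to reach $\Delta+1$ rather than $\Delta + \bigo{\log \Delta}$.
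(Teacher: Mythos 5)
This statement is a conjecture, not a theorem of the paper: the paper offers no proof of it, and its own contribution (\cref{thm:pcf-log}) is the much weaker asymptotic bound $\pcf(G)\le \Delta+\bigO{\log\Delta}$. Your text, to its credit, is honest that it is a plan rather than a proof --- your final paragraph concedes that ``the decisive step I would have to push through'' remains open --- but that means it contains genuine gaps at every load-bearing point. The most concrete one is at the very start: the minimal-counterexample reductions you assert (that a vertex of degree $<\Delta$, or any missing colour in a neighbourhood, is automatically ``slack'') do not go through for pcf colourings. If you delete a low-degree vertex $v$, colour $G-v$ by minimality, and try to reinsert $v$, then $v$ must avoid not only the colours on $N(v)$ (properness) but also, for each neighbour $u$, the solitary colour of $u$'s witness in $G-v$, since repeating it would destroy that witness; this is up to $2\deg(v)$ constraints, and $v$ additionally needs a witness of its own. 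This doubling of constraints is precisely why the naive greedy bound is $2\Delta+1$ (cf.\ \cref{prop:h-pcf-greedy} with $h=1$), so ``large minimum degree in a minimal counterexample'' is not the routine step you present it as.

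The repair step has the same problem in sharper form. Your Kempe swap at a deficient $v$ (recolour one of a doubled pair $x_1,x_2$ from $a$ to a missing colour $b$) can be blocked when $x_1$ and $x_2$ lie in the same $(a,b)$-component, and you must rule out the scenario where this happens \emph{simultaneously for every doubled colour $a$ and every missing colour $b$} --- e.g.\ a $\Delta$-regular deficient vertex whose neighbourhood consists of $\Delta/2$ colour pairs, each pair chained through every missing colour. You wave at ``rotating a fan at $v$,'' but Vizing fans are an edge-colouring device with no established analogue here, and even a successful swap can destroy the witness of any vertex whose witness lies on the (arbitrarily long) chain, which is the cascade your potential function is supposed to tame. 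That potential is never defined: ``weights arranged to decay along the chain'' is not a construction, and without one there is no termination argument, hence no proof. Your closing diagnosis is accurate --- the local lemma machinery that gives $\Delta+\bigO{\log\Delta}$ has no evident route to the additive constant $1$ --- but that diagnosis is an explanation of why the conjecture is open, not a resolution of it.
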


The condition $\Delta \ge 3$ is needed because the conjectured bound does not hold for cycles. Indeed $\pcf(C_5)=5 = \Delta(C_5)+3$, and for every other cycle $C$ whose length is not a multiple of $3$, $\pcf(C) = 4 = \Delta(C)+2$. The condition that $G$ is connected ensures that there is no isolated vertex.
The conjectured upper bound is best possible. For instance, if $G$ is a graph obtained by subdividing once any subset of edges from the complete graph $K_n$ with $n\ge 4$, then $\pcf(G)=n=\Delta(G)+1$, while $\chi(G)=2$. In particular, when $G$ is obtained by subdividing once every edge of $K_n$, the difference between $\chi(G)$ and $\pcf(G)$ is unbounded.

As a first step towards resolving \cref{conj:pcf}, Cranston and Liu \cite{CrLi24} observed that a simple greedy algorithm provides a pcf $(2\Delta+1)$-colouring of any graph $G$ of maximum degree $\Delta$. They moreoever proved that $\pcf(G) < 1.656\Delta$ when $\Delta$ is large enough. More recently, Liu and Reed \cite{liu2024asymptotically} proved that \cref{conj:pcf} holds asymptotically, by showing $\pcf(G) \le \Delta + \mathcal{O}(\Delta^{2/3}\log \Delta)$ as $\Delta \to \infty$. Our first contribution is to improve this upper bound by eliminating the polynomial factor in the second-order term.

\begin{thm}\label{thm:pcf-log}
    Let $G$ a graph of maximum degree $\Delta$, then $$\pcf(G) \le \Delta + \bigO{\log \Delta}.$$
\end{thm}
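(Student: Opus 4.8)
The engine of the argument is a single probabilistic estimate. If the neighbours of a vertex $v$ are coloured in a sufficiently spread-out way — say each colour is used on $N(v)$ with probability $\bigO{1/\Delta}$, as happens for a near-uniform colouring — then the expected number of colours appearing \emph{exactly} once in $N(v)$ is a constant fraction of $\deg(v)$, and this count concentrates; consequently $v$ fails to receive a witness with probability only $\exp\!\pth{-\bigOmega{\deg(v)}}$. This is precisely why a surplus of order $\log\Delta$ is the right budget: fixing a palette of size $\Delta+C\log\Delta$, the failure probability above drops below any fixed inverse polynomial in $\Delta$ as soon as $\deg(v)\ge T\coloneqq c\log\Delta$, which is exactly what is needed to beat the $\mathrm{poly}(\Delta)$ local dependency of the witness events in the Lovász Local Lemma. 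I would therefore split the vertices at the threshold $T$: \emph{big} vertices, of degree at least $T$, whose witnesses will be produced probabilistically, and \emph{small} vertices, of degree below $T$, which must be handled by hand.

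For the big vertices the plan is to colour $G$ by a near-uniform colouring and to define, for each big vertex, the bad event that it has no witness. By the estimate above each such event has probability at most $\Delta^{-\bigOmega{1}}$, and since it depends only on the colours in a single neighbourhood it is independent of all but $\mathrm{poly}(\Delta)$ others, so the Local Lemma supplies a colouring in which every big vertex is witnessed. The delicate point already here is \emph{properness}: a plain uniform colouring together with the symmetric Local Lemma cannot guarantee a proper colouring below roughly $e\Delta$ colours, a constant-factor gap that the $\log\Delta$ surplus cannot close. I would resolve this by not colouring from scratch but by controlling properness and the witness-producing spread together — either by running a semi-random (nibble) colouring procedure in the spirit of Molloy and Reed, or by layering the random choices on top of a fixed proper base colouring so that properness is preserved by construction while the neighbourhoods remain spread enough for the witness estimate to apply.

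The small vertices are the genuine difficulty. A vertex of degree $1$ is automatically witnessed by its unique neighbour, and a short case analysis shows that already for degree at least three the witness-free probability beats the local dependency; the obstruction is therefore concentrated on degree-two vertices, the path- and cycle-like configurations that are exactly responsible for the exceptional values of $\pcf$ on short cycles recorded after \cref{conj:pcf}. For such vertices the neighbourhoods are far too small for any concentration to help, so I would reserve a pool $R$ of $\bigO{\log\Delta}$ colours untouched by the big-vertex phase and repair the small vertices deterministically. The subgraph $H$ induced by the small vertices has maximum degree below $T=\bigO{\log\Delta}$, so inside $H$ the dependencies are only $\mathrm{poly}(\log\Delta)$ while $R$ still offers $\bigO{\log\Delta}$ free colours; finishing by a greedy pass along a degeneracy ordering of $H$, or by a second and far cheaper application of the Local Lemma restricted to $H$, should force a witness for every remaining vertex.

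The hard part, and the place I expect most of the work to go, is making the two phases coexist. A recolouring performed to witness a small vertex can destroy a witness previously installed for a big neighbour and can create a monochromatic edge across the interface between $H$ and the rest of $G$. I would keep properness at the interface by insisting that every repair use a colour of $R$ locally unused around the vertex being recoloured, and would make the big-vertex guarantee robust — demanding a linear-in-degree surplus of witnessing colours rather than a single one — so that it survives the bounded number of later changes in each neighbourhood. Checking that one reservoir of size $\bigO{\log\Delta}$ is simultaneously large enough for the probabilistic phase, for the interface repairs, and for the structural colouring of $H$, with all the constants compatible, is the careful bookkeeping on which the bound ultimately rests.
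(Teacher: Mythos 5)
Your overall template (proper base colouring, random layer, Lov\'asz Local Lemma, with a degree split) is the right family of ideas and matches the paper's architecture at a high level, but the specific split you chose breaks both halves of your plan. The paper cuts at degree $\Delta/2$, not at $\log\Delta$: a greedy colouring, performed \emph{first}, with $\Delta+1$ colours already hands a witness to \emph{every} vertex of degree at most $\Delta/2$ (when such a vertex is coloured, its neighbours and their designated witnesses impose at most $2\deg(v)\le\Delta$ constraints), so the random phase only has to serve vertices of degree greater than $\Delta/2$. Your cut at $T=c\log\Delta$ is calibrated to the estimate ``no witness with probability $\exp(-\Omega(\deg v))$'', but that estimate is only valid for a genuinely near-uniform colouring, which you correctly observe you cannot produce with $\Delta+\bigO{\log\Delta}$ colours. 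Your fallback --- layering sparse random recolourings over a fixed proper base, which is essentially what the paper does --- does not rescue it: properness and the survival of solitary occurrences force the sampling probability to be $p=\Theta(1/\Delta)$, so one round creates a witness for a degree-$j$ vertex with probability only $\Theta(j/\Delta)$, and a vertex of degree $\Theta(\log\Delta)$ would need $\Theta(\Delta)$ rounds, i.e.\ $\Theta(\Delta)$ fresh colours, to be served with polynomially small failure probability. In short, the single-shot concentration that justifies your threshold evaporates once you switch to the only construction you can actually build; with the layered construction the threshold must sit at $\Theta(\Delta)$, at which point the ``small'' vertices no longer induce a low-degree subgraph and your plan for them (degeneracy ordering of $H$, cheap LLL inside $H$) collapses.

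The second gap is the interface/repair phase, which you flag as the hard part but do not solve, and which the paper's ordering removes entirely. Because the small-degree vertices are coloured greedily \emph{before} the random phase, and because the random phase only recolours independent sets $C_i$ with brand-new colours while explicitly excluding from $C_i$ any vertex whose recolouring would disturb a designated witness (the exclusion set of $u$ is $N(u)\cup\W(N(u))$), properness and all previously installed witnesses are preserved \emph{deterministically}: no reservoir, no repairs, no robustness argument. In your ordering (random first, deterministic repair after), a repair recolours a neighbour $u$ of a deficient vertex; $u$ may itself be the witness of up to $\Delta$ big vertices, the number of repairs landing inside one neighbourhood $N(w)$ is not bounded by anything you control (so a ``linear-in-degree surplus'' of witnesses for $w$ can be wiped out), reserve colours can accumulate inside $N(u)$ and block the properness of later repairs, and the surplus you want to lean on is itself a consequence of the unproved single-shot estimate from the first gap. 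If you reverse the two phases, raise the threshold to $\Delta/2$, and make witness-preservation a deterministic property of the recolouring procedure rather than a robustness afterthought, you arrive at the paper's proof.
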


Proper conflict-free colourings can be seen as a relaxation of distance-$2$ colourings: the latter require all neighbours of a vertex to have different colours, whereas the former only ask for one neighbour to have a colour distinct from the others. To interpolate between those two notions, one can introduce the notion of \emph{proper $h$-conflict-free $k$-colourings} (or \emph{$h$-pcf $k$-colouring} for short), for some integers $h,k\ge 1$. Those are proper $k$-colourings where every vertex $v$ has at least $\min\, \{h, \deg(v)\}$ solitary colours. In particular, for a graph $G$ of maximum degree $\Delta$, a $1$-pcf $k$-colouring of $G$ is simply a pcf $k$-colouring of $G$, while an $h$-pcf $k$-colouring is a distance-$2$ $k$-colouring of $G$ whenever $h \ge \Delta-1$. 
For every $h\ge 1$, we denote $\pcf^h(G)$ the minimum $k$ such that an $h$-pcf $k$-colouring of $G$ exists. A simple greedy algorithm, inspired by the observation of Cranston and Liu \cite{CrLi24}, yields the following upper bound. We include the proof with a complete description of the greedy algorithm, which will be useful in an alternative form in the rest of the paper.

\begin{prop}\label{prop:h-pcf-greedy}
    Let $G$ be a graph of maximum degree $\Delta$, and $h \ge 1$ an integer. Then $$\pcf^h(G) \le (h+1)\Delta + 1.$$
\end{prop}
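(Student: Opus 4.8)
The plan is to colour the vertices greedily in an arbitrary order $v_1,\dots,v_n$, while maintaining for each already-coloured vertex $v$ a set $S(v)$ of \emph{locked} solitary colours: colours that currently appear exactly once in $N(v)$ and that every later-coloured neighbour of $v$ will be forbidden from reusing. I would designate, for each $v$, its first $m_v \coloneqq \min\set{h,\deg(v)}$ neighbours in the ordering as its intended witnesses, and build $S(v)$ so that at the end $\abs{S(v)} = m_v$, which is exactly what the definition of an $h$-pcf colouring requires. When it is time to colour a vertex $w$, I would forbid two kinds of colours: the colours of the already-coloured neighbours of $w$ (to keep the colouring proper), contributing at most $\Delta$ forbidden colours; and the locked colours $\bigcup S(v)$ taken over the coloured neighbours $v$ of $w$ (to protect witnesses already established for those neighbours), contributing at most $h\Delta$ since each $\abs{S(v)}\le h$. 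The total is at most $(h+1)\Delta$, so among $(h+1)\Delta+1$ colours at least one admissible colour $c(w)$ always remains.

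The key step is then to argue, by induction on the order, that each designated witness actually succeeds in becoming a locked solitary colour. When I colour the $k$-th neighbour $w$ of a vertex $v$ with $k\le m_v$, the inductive hypothesis gives that the first $k-1$ neighbours of $v$ have all been turned into locked witnesses, so the coloured neighbours of $v$ at this moment are \emph{exactly} those $k-1$ witnesses and $S(v)=\set{c(u_1),\dots,c(u_{k-1})}$. Since the admissible colour $c(w)$ was already chosen to avoid $S(v)$ (as $v$ is a coloured neighbour of $w$), the colour $c(w)$ differs from every current neighbour-colour of $v$, hence appears exactly once in $N(v)$; I add it to $S(v)$ and lock it. The protection rule then guarantees no later neighbour of $v$ repeats it, so it stays solitary for $v$ forever. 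Running this for $k=1,\dots,m_v$ yields $\abs{S(v)}=m_v$ solitary colours for every $v$, while properness is immediate from the first family of forbidden colours.

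The step I expect to be the main obstacle — and the one that dictates the whole design — is reconciling the witness-creation requirement with the colour budget. Making $c(w)$ solitary for a neighbour $v$ literally demands that $c(w)$ differ from \emph{all} other neighbours of $v$, which naively is up to $\Delta-1$ constraints per neighbour and would blow the budget up to order $\Delta^2$. The resolution is the ordering device of assigning the \emph{earliest} neighbours as witnesses: at the instant a designated witness of $v$ is coloured, the only coloured neighbours of $v$ are its previously locked witnesses, so the uniqueness constraint collapses onto the protection constraint $c(w)\notin S(v)$, and the two families of forbidden colours coincide rather than accumulate. Verifying this collapse carefully, and checking the edge cases $\deg(v)<h$ (where all neighbours become witnesses) and isolated vertices (where $m_v=0$), is where the real work lies; once it is in place the count $\pth{h+1}\Delta$ follows immediately and the bound $\pcf^h(G)\le (h+1)\Delta+1$ is established.
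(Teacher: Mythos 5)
There is a genuine gap, and it lies exactly at the step you identified as the crux. Your protection rule consults $S(v)$ only for \emph{coloured} neighbours $v$ of the vertex $w$ being coloured, and your inductive step explicitly leans on this via the parenthetical ``as $v$ is a coloured neighbour of $w$''. But that premise is generally false: the designated witnesses of $v$ are its \emph{earliest} neighbours in the ordering, so they are typically coloured \emph{before} $v$ itself. At the moment such a witness $w$ receives its colour, $v$ is still uncoloured, the set $S(v)$ does not yet exist (you only maintain it for already-coloured vertices), and nothing in your two families of constraints forbids $c(w)$ from colliding with the colour of an earlier neighbour of $v$. A minimal counterexample already occurs for $h=1$ on the path $u_1\,v\,w$ with ordering $(u_1,w,v)$: when $w$ is coloured it has no coloured neighbours (its only neighbour $v$ is uncoloured, and $u_1$ is not adjacent to $w$), so the choice $c(w)=c(u_1)$ is admissible under your rules; then $v$ ends up with both neighbours sharing one colour and has no solitary colour at all. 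The same failure, and for the same reason, affects your preservation claim that ``no later neighbour of $v$ repeats it'': a later neighbour of $v$ coloured before $v$ never consults $S(v)$.

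The repair is to make the protection rule independent of whether the protected vertex is coloured, which is precisely what the paper does: fix the witness sets $\W(v)$ (the first $\min\set{h,\deg(v)}$ neighbours of $v$) for \emph{all} vertices before the colouring starts, and when colouring $w$, forbid the colours already assigned inside $\W(u)$ for \emph{every} neighbour $u$ of $w$, coloured or not, in addition to the colours of coloured neighbours of $w$. This costs nothing in the budget — each neighbour $u$ still forbids at most $1 + h$ colours (its own, if assigned, plus at most $h$ from $\W(u)$), so $(h+1)\Delta+1$ colours still suffice — and it restores both halves of your induction: a designated witness of $v$ now avoids the colours of the previously coloured vertices of $\W(v)$ because $v\in N(w)$ triggers the rule regardless of $v$'s colouring status, and every later neighbour of $v$ avoids the witnesses' colours for the same reason. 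With that single change your argument coincides with the paper's proof.
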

\begin{proof}
    Fix any linear ordering $(v_1, \dots, v_n)$ of $V(G)$. For a vertex $v \in V(G)$, let $\W(v)$ be the first $\min\,\{h, \deg(v)\}$ neighbours of $v$ in the ordering. For $i$ from $1$ to $n$, colour $v_i$ with the first colour which does not appear in $N(v_i)$, nor in $\W(u)$ for $u \in N(v_i)$. Let $\sigma$ be the colouring obtained at the end of this procedure. One easily checks that $\sigma$ is proper. Furthermore, for all $v \in V$, the vertices of $\W(v)$ are witnesses of $v$ (the fact that $\W(v)$ is coloured before $N(v)\setminus \W(v)$ is important), therefore $\sigma$ is $h$-pcf. Observe that the number of colour constraints when colouring $v_i$ is at most $(h+1)\deg(v_i)$, therefore $\sigma$ uses at most $(h+1)\Delta + 1$ colours.
\end{proof}

\begin{rk}
    With the additional assumption that $G$ is $d$-degenerate, we obtain $\pcf^h(G) \le h\Delta + d + 1$.
\end{rk}

Cho \emph{et al.} \cite{cho2025brooks} improved this upper bound by reducing the number of colours needed by $2$.

\begin{thm}[Cho, Choi, Kwon, Park, 2025]\label{thm:pcf-cho}
    Let $G$ be a graph of maximum degree $\Delta$, and let $1 \le h \le \Delta - 2$ be an integer. Then $\pcf^h(G) \le (h+1)\Delta - 1$.
\end{thm}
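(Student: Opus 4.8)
The plan is to keep the greedy framework of \cref{prop:h-pcf-greedy}: fix an ordering $v_1,\dots,v_n$, assign to each vertex $v$ a witness set $\W(v)$ consisting of its first $\min\{h,\deg(v)\}$ neighbours, and colour greedily while forbidding, at step $i$, the set $F(v_i)$ of colours already used on $N(v_i)$ together with the colours of $\bigcup_{u\in N(v_i)}\W(u)$. Since this procedure uses $1+\max_i|F(v_i)|$ colours, the target $(h+1)\Delta-1$ amounts to proving $|F(v_i)|\le (h+1)\Delta-2$ for every $i$. First I would record the crucial reduction that makes this plausible: because $|F(v)|\le (h+1)\deg(v)$ always, any vertex of degree at most $\Delta-1$ satisfies $|F(v)|\le (h+1)(\Delta-1)=(h+1)\Delta-(h+1)\le (h+1)\Delta-2$ using $h\ge 1$, hence is \emph{automatically safe in any position}. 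So the entire difficulty is concentrated on the vertices of degree exactly $\Delta$, for which I must save two colours off the trivial bound $(h+1)\Delta$. I would also reduce to $G$ connected, handling components independently.

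The first saving is the standard Brooks-type ordering saving: choose a connected ordering (for instance a reverse-BFS order rooted at a well-chosen vertex, the root coloured last) so that every non-root vertex $v_i$ has a later, still-uncoloured neighbour. This removes one colour from the properness part, giving $|F(v_i)|\le (\Delta-1)+h\Delta=(h+1)\Delta-1$, i.e. a saving of exactly one. To obtain the second saving I would try to arrange that every \emph{critical} (degree-$\Delta$) vertex has \emph{two} later neighbours at colouring time, since two uncoloured neighbours yield $|F(v_i)|\le (\Delta-2)+h\Delta=(h+1)\Delta-2$, which is precisely what is needed. Such an ordering exists away from the top of the order, and the obstruction to having two later neighbours is genuinely local: in any linear order only the last two vertices are forced to have fewer than two later neighbours, so I would aim to place safe (degree $<\Delta$) vertices there when $G$ is irregular, and invoke a $2$-connected/st-ordering or ear-decomposition structure to propagate "two later neighbours" through the regular part.

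The hard part will be the endgame, namely the $\Delta$-regular $2$-connected \emph{core} and the last vertices of the order, where two later neighbours are unavailable. There the second saving must come from a colour coincidence: either two neighbours of $v$ receiving the same colour, or an overlap $A\cap B\neq\varnothing$ between the properness constraints and the witness constraints. The overlap is free whenever the neighbourhood contains a triangle (a common neighbour $w\in N(v)\cap N(u)$ with $w\in\W(u)$ lands in both sets), so the truly resistant instances are triangle-free near-$\Delta$-regular graphs, where neither a second later neighbour nor an automatic coincidence is guaranteed. I expect to resolve these via a Brooks-type structural dichotomy together with a local Kempe-type exchange that recolours a bounded set of final vertices to force a repeated colour into the last neighbourhood, using the hypothesis $h\le\Delta-2$ to guarantee the slack $\Delta-\min\{h,\deg(v)\}\ge 2$ that makes room for such a recolouring without destroying any previously secured witness. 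Establishing that this finishing step always succeeds, uniformly over the possible cores, is the main obstacle and the technical heart of the argument.
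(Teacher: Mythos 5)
First, a point of order: the paper does not prove this statement at all --- it is quoted from Cho, Choi, Kwon and Park \cite{cho2025brooks} as background --- so there is no internal proof to compare yours against, and I can only assess your sketch on its own terms.

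On those terms there is a genuine gap, and the framing of where the difficulty lies is quantifiably wrong. Your plan hinges on an ordering in which every degree-$\Delta$ vertex outside a bounded ``endgame'' has at least two later neighbours, justified by the assertion that ``in any linear order only the last two vertices are forced to have fewer than two later neighbours''. That assertion is false: writing $d^+(v_i)$ for the number of neighbours of $v_i$ appearing after it, every ordering satisfies $\sum_i d^+(v_i)=|E(G)|$, so the number of vertices with $d^+(v_i)\ge 2$ is at most $|E(G)|/2$. Take $G$ cubic, i.e.\ $\Delta=3$ and hence $h=1$ (which is inside the hypothesis $1\le h\le\Delta-2$, with target bound $(h+1)\Delta-1=5$): then $|E(G)|/2=3n/4$, so in \emph{every} ordering at least $n/4$ of the vertices --- all of degree $\Delta$ --- have at most one later neighbour, and for those your accounting only gives $|F(v_i)|\le(\Delta-1)+h\Delta=5$, which can exhaust the whole palette. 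Hence the obstruction is not confined to the last two vertices or to a $2$-connected core: a linear fraction of the vertices must be rescued by your ``colour coincidence'' mechanism, which is exactly the step you explicitly leave open, and which is nontrivial precisely in the hard case you identify (in triangle-free regular graphs, neither a repeated colour in $N(v)$ nor an overlap $N(v)\cap\W(u)\ne\emptyset$ with $u\in N(v)$, which forces a triangle, is available for free). A sketch whose central step is declared an open obstacle, and which moreover must invoke that step at linearly many vertices rather than at a bounded tail, is not a proof.

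Two further soft spots. Even for $\Delta\ge 4$, where the counting obstruction disappears, you never establish that an ordering with $d^+\ge 2$ outside a bounded tail actually exists: $st$-numberings and $2$-connectivity give one later neighbour per vertex, not two, and the ear-decomposition step is only an appeal. And in the regular case the very last vertex has $d^+=0$, so it needs two savings entirely from coincidences. The parts of your argument that are correct --- vertices of degree at most $\Delta-1$ are safe since $(h+1)(\Delta-1)\le(h+1)\Delta-2$, and one later neighbour saves one colour --- reproduce, for connected irregular graphs, only the bound $(h+1)\Delta$, which is the easy one-colour improvement over \cref{prop:h-pcf-greedy} and still one colour short of the theorem; regular graphs are not handled at all.
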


For the specific case $h = \Delta - 2$, they exhibit a graph $G$ of maximum degree $\Delta$ arbitrarily large such that $\pcf^{h}(G) = (h+1)(\Delta - 1)$. Note that this last term is also equal to $h\Delta + 1$. We consider the opposite end of the spectrum by considering $h$ a fixed constant, and make an asymptotic improvement on \cref{thm:pcf-cho} in that case.

\begin{thm}\label{thm:pcf-upper}
    Let $h\ge 1$ be a fixed integer. If $G$ is a connected graph of maximum degree $\Delta$, then $$\pcf^h(G)\le h\Delta + \bigO{h \log\Delta}.$$
\end{thm}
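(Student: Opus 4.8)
The plan is to generalize the argument behind \cref{thm:pcf-log} (the case $h=1$) from one guaranteed solitary colour to $h$ of them. First I would dispose of small $\Delta$: since $h$ is fixed, \cref{prop:h-pcf-greedy} already gives $\pcf^h(G)\le (h+1)\Delta+1 = h\Delta + \bigO{1}$ whenever $\Delta$ is bounded, so I may assume $\Delta$ is larger than any fixed constant. I would then work with a palette split into a main part of $h\Delta$ colours and a reserve part $R$ of $t=\bigO{\log\Delta}$ fresh colours, the reserve serving as a correction budget; the final count is then $h\Delta + t = h\Delta + \bigO{\log\Delta}$.

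The core is a probabilistic argument on the main palette controlled by the Lovász Local Lemma. Colour the vertices (partially) at random from the $h\Delta$ main colours and introduce two families of bad events: edge events ``the endpoints receive the same colour'' (properness), and vertex events ``$v$ has fewer than $\min\{h,\deg(v)\}$ solitary colours'' (the $h$-conflict-free requirement). The key computation is that, with $h\Delta$ colours, the expected number of solitary colours in the neighbourhood of a vertex $v$ is $\Theta(\deg(v))$: the expected load of each colour is at most $1/h\le 1$, so a constant fraction of the $\deg(v)$ neighbours receive a colour borne by no other neighbour. Because the target $h$ is a fixed constant while this expectation is $\Theta(\deg(v))$, a bounded-differences / Azuma concentration argument (identical in shape to the $h=1$ case, only the constant target changes) gives $\pr{v \text{ has fewer than } h \text{ solitary colours}} \le \exp(-\bigOmega{\deg(v)})$. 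Hence for $\deg(v)\ge C\log\Delta$ with $C$ large this probability falls below $\Delta^{-3}$, which beats the $\bigO{\Delta^2}$ dependency of the local-lemma instance, and the vertex events for high-degree vertices are thereby discharged.

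It remains to treat the vertices of degree below $C\log\Delta$, for which the solitary-colour count is not concentrated and a uniformly random colouring would fail with probability as large as $\bigOmega{h^2/\Delta}$. These I would colour deterministically using \cref{prop:h-pcf-greedy} in its ``alternative form'': process them in a suitable (for instance, degeneracy) order and, for each such $v$, assign its $\min\{h,\deg(v)\}$ intended witnesses distinct colours drawn from the reserve $R$, so that each witness colour is solitary in $N(v)$ by construction. Since each low-degree vertex imposes only $\bigO{\log\Delta}$ colour constraints, a reserve of size $t=\bigO{\log\Delta}$ with a large enough constant suffices for this greedy pass to go through, and the witnesses planted on the reserve colours do not interfere with the solitary colours already secured for the high-degree vertices on the main palette.

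The main obstacle is the genuine tension between properness and the creation of witnesses, together with the interface between the two regimes. The extremal examples (subdivisions of $K_n$, and their $h$-analogues) show that witnesses cannot be produced purely locally: making a random colouring proper in a globally constrained instance can destroy the abundant solitary colours, which is precisely why $h\Delta$ colours, rather than $(1+o(1))\Delta$, are needed in general. Concretely, independent random colours are far from proper unless the palette is inflated by a constant factor, so properness should not be obtained by a crude local-lemma bound but maintained throughout --- either by starting from a proper base colouring and perturbing it within the reserve/main budget, or by a Moser--Tardos-style local resampling that repairs monochromatic edges without destroying the $\bigOmega{\deg(v)}$ surplus of solitary colours at high-degree vertices. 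Verifying that this resampling terminates while using only $\bigO{\log\Delta}$ reserve colours, and that the low-degree greedy pass and the high-degree random phase can be glued along vertices of degree $\approx C\log\Delta$ without either destroying the other's witnesses, is the crux of the argument and the step I expect to be most delicate.
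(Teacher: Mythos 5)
Your proposal correctly identifies the two regimes (high-degree vertices handled probabilistically, low-degree vertices handled greedily), but it leaves unresolved exactly the step that makes the theorem hard, and the quantitative claims it does make are unjustified. First, your core claim for high-degree vertices --- that a colouring from $h\Delta$ main colours is simultaneously proper and has $\Theta(\deg(v))$ solitary colours with failure probability $\exp(-\Omega(\deg(v)))$ --- conflates two incompatible settings. The expectation and Azuma computation you describe are valid for an \emph{independent} uniform colouring, which is not proper; and properness cannot then be restored by the Local Lemma or by Moser--Tardos resampling, because with $k = h\Delta + \bigO{\log\Delta}$ colours the symmetric LLL condition for the monochromatic-edge events, $e\cdot\tfrac{1}{k}\cdot 2\Delta\le 1$, requires $k \gtrsim 2e\Delta$, which fails for every $h\le 5$ and in particular for the headline case $h=1$. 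Conversely, if you sample a uniformly random \emph{proper} colouring instead, the independence underlying your concentration bound disappears; the known way to analyse solitary colours in that model (the resampling martingale used in the paper's \cref{partial-pcf-led}) needs a colour surplus of order $3\deg(v)$ above $\Delta$, i.e.\ about $4\Delta$ colours for vertices of degree close to $\Delta$ --- far more than $h\Delta + \bigO{\log\Delta}$ for fixed small $h$. You flag this tension as ``the crux'' yourself, but the proposal contains no mechanism that resolves it. Second, the low-degree pass also fails as stated: planting a reserve colour on a witness $u$ of a low-degree vertex $v$ is constrained not only by $N(v)$ but by properness at $u$ (reserve colours already present in $N(u)$, of which there is no $\bigO{\log\Delta}$ bound, since $u$ may neighbour many planted witnesses of other low-degree vertices) and by the requirement of not destroying solitary colours of the up to $\Delta$ other neighbours of $u$, each protecting up to $h$ witness colours; these constraints can reach $\Theta(h\Delta)$, so a reserve of size $\bigO{\log\Delta}$ cannot absorb them by the naive count you give.

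The missing idea is that the paper sidesteps the properness-versus-witnesses tension entirely, by allocating the palettes in the opposite way to yours. Phase one (\cref{lem:partial-pcf}) is a deterministic greedy colouring of the \emph{whole} graph with $h\Delta+1$ colours, processing high-degree vertices first with designated witness sets $\W(v)$ of size $h-1$ (size $h$ for low-degree vertices); the ordering makes the constraint count at most $h\Delta$, and after this phase every vertex is at most \emph{one} witness short. Phase two (\cref{lem:pcf-precoloured}) creates the missing witness for high-degree vertices by a R\"odl-nibble recolouring of independent sets with \emph{fresh} colours: recolouring an independent set with a brand-new colour keeps the colouring proper for free, a fresh colour meeting $N(v)$ exactly once is automatically solitary, and the designated witnesses from phase one are protected because the exclusion neighbourhood $\cN(u) \supseteq \W(N(u))$ prevents them from ever being recoloured alongside a conflicting vertex. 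Since each high-degree vertex only needs $\Omega(\log\Delta)$ successful rounds (for Chernoff-type concentration), $\bigO{h\log\Delta}$ fresh colours suffice rather than $\Theta(\Delta)$. In short, the new colours are spent on the random phase and the main palette on the greedy phase --- the reverse of your allocation --- and this reversal is precisely what makes both the properness problem and the interference problem disappear.
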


\cref{thm:pcf-log} is a direct corollary of \cref{thm:pcf-upper} with $h=1$. The precise dependency hidden behind the term $\bigo{h\log\Delta}$ is explicitly stated in \cref{cor:h-pcf}.\\

The notion of pcf colourings can also be seen as a strengthening of \emph{odd colourings}, cf. \cite{petruvsevski2022colorings,CCKP22+,dai2023new}.
For a graph $G$ and an integer $h \ge 1$, an \emph{$h$-odd colouring} of $G$ is a proper colouring such that every vertex $v$ observes at least $\min\, \{h, \deg(v)\}$ colours in its neighbourhood that appear an odd number of times. Let $\odd^h(G)$ be the minimum number of colours required for an $h$-odd colouring of $G$, and $\odd(G) \coloneqq \odd^1(G)$. Clearly, $\odd^h(G) \le \pcf^h(G)$ for every $h\ge 1$. A weaker version of \cref{conj:pcf} was first formulated in \cite{CPS22+} for the odd chromatic number of graphs of maximum degree $\Delta$, and it was shown in \cite[Theorem 3]{dai2023new} that $\odd(G) \le \Delta + \bigO{\log\Delta}$. In that sense, \cref{thm:pcf-log} is also a strengthening of this result.

If $h$ is a fixed integer and $G$ is a graph of maximum degree $\Delta$, then we have $\odd^h(G) \le h\Delta + \bigO{h\log\Delta}$ as a direct consequence of \cref{thm:pcf-upper}. The first-order term of this upper bound was shown to be tight in \cite[Proposition 28]{dai2023new}.

\begin{prop}[Dai, Ouyang, Pirot, 2024]\label{prop:lower-bound}
    Let $h \ge 1$ be a fixed integer. For infinitely many values of $\Delta$, there exists a graph $G$ of maximum degree $\Delta$ and minimum degree $h+1$ such that 
    \[\odd^h(G) \ge h\Delta + 1.\]
\end{prop}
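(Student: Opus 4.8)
The plan is to exhibit an explicit gadget-based construction whose core observation is that any vertex of degree exactly $h+1$ forces its neighbourhood to be rainbow-coloured in any $h$-odd colouring. Indeed, such a vertex $c$ has $h+1$ neighbours and must see at least $\min\{h,h+1\}=h$ colours of odd multiplicity among them. If two neighbours shared a colour, the neighbourhood would span at most $h$ distinct colours; for $h$ of them to have odd multiplicity they would have to be all $h$ present colours, with multiplicities summing to $h+1$, which is impossible since a sum of $h$ odd positive integers has the parity of $h$. Hence at most $h-1$ colours could be odd, a contradiction. So every degree-$(h+1)$ vertex behaves like a ``rainbow constraint'' on its $h+1$ neighbours, exactly as the subdivision vertices do in the example $K_n^{1/2}$ for the case $h=1$.

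With this in hand, I would build $G$ from a combinatorial design. Set $n \coloneqq h\Delta+1$ and take a Steiner system $S(2,h+1,n)$: a set of $n$ \emph{points} together with a family of \emph{blocks} of size $h+1$ such that every pair of points lies in exactly one block. Let the points be the \emph{main vertices} of $G$, add one \emph{check vertex} $c_B$ per block $B$ joined to the $h+1$ points of $B$, and put no edges among main vertices. In such a design every point lies in exactly $r=(n-1)/h=\Delta$ blocks, so each main vertex has degree exactly $\Delta$ and each check vertex has degree exactly $h+1$; thus $G$ has maximum degree $\Delta$ and minimum degree $h+1$ (for $\Delta \ge h+2$).

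The lower bound then follows immediately: in any $h$-odd colouring, the first paragraph forces the $h+1$ points of each block to receive pairwise distinct colours. Since $S(2,h+1,n)$ covers every pair of points by some block, every two main vertices are coloured differently, so the colouring uses at least $n = h\Delta+1$ colours. As $\odd^h(G)\le \pcf^h(G)$, this lower bound holds for both parameters.

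The main obstacle is purely the existence of the design for infinitely many $\Delta$. Here I would invoke Wilson's existence theorem, which guarantees $S(2,h+1,n)$ for all sufficiently large $n$ satisfying $n\equiv 1 \pmod h$ and $n(n-1)\equiv 0 \pmod{(h+1)h}$. The first condition holds automatically since $n-1=h\Delta$, and the second reduces to $\Delta(\Delta-1)\equiv 0 \pmod{h+1}$, which holds for instance for every $\Delta \equiv 1 \pmod{h+1}$. This yields an infinite family of admissible values of $\Delta$, completing the argument. If one wishes to avoid design-theoretic machinery, a covering design would already suffice for the pairwise-distinctness step, at the cost of making the main-vertex degrees only approximately regular; but the Steiner system is what delivers the clean degree bounds $\Delta$ and $h+1$ exactly.
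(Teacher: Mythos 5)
The paper does not actually prove this proposition: it is imported verbatim from the cited reference \cite{dai2023new} (Proposition~28 there), so there is no in-paper proof to compare against. Judged on its own, your argument is correct and complete. The parity step is sound: a degree-$(h+1)$ vertex needs $h$ odd colours, so a repeated colour would force exactly $h$ distinct colours all of odd multiplicity summing to $h+1$, which is impossible since $h$ odd numbers sum to something of parity $h \neq h+1 \pmod 2$. The degree counts from $S(2,h+1,n)$ are right ($r=(n-1)/h=\Delta$ for points, $h+1$ for block vertices), the pair-coverage property does force all $n=h\Delta+1$ points to get distinct colours, and the divisibility bookkeeping for Wilson's theorem is correct: $n-1=h\Delta$ handles the first condition, and $(h\Delta+1)\Delta \equiv -\Delta(\Delta-1) \pmod{h+1}$ reduces the second to $\Delta(\Delta-1)\equiv 0 \pmod{h+1}$, satisfied by the infinite family $\Delta \equiv 1 \pmod{h+1}$. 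Note also that your construction is exactly the generalisation of the example the paper itself gives for $h=1$ (the once-subdivided complete graph $K_n^{1/2}$, which is the incidence graph of the trivial design $S(2,2,n)$ whose blocks are the edges of $K_n$), so it is very much in the spirit of the source of the proposition; the only cost of the general-$h$ version is the appeal to Wilson's existence theorem, which is unavoidable if one insists on exact regularity of the point degrees.
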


In turn, \cref{prop:lower-bound} demonstrates that \cref{thm:pcf-upper} is tight up to the second-order term, i.e. for every fixed integer $h \ge 1$, there exists a graph of maximum degree $\Delta$ arbitrarily large such that $\pcf^h(G) \ge h\Delta + 1$. In light of these observations, we propose an extended version of \cref{conj:pcf}.
    
\begin{conj}
    Let $h \ge 1$ be an integer, and $G$ be a graph of maximum degree $\Delta$ sufficiently large. Then $$\pcf^h(G) \le h\Delta + 1.$$
\end{conj}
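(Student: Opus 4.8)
The conjectured bound $h\Delta+1$ would follow, up to a $\bigO{\log\Delta}$ additive term, from the first milestone $\pcf^h(G)\le h\Delta+\bigO{\log\Delta}$, so the natural plan is to prove that weaker bound and then try to shave the slack. To prove the milestone, the goal is to bring the leading coefficient down from $h+1$, as in the greedy bound of \cref{prop:h-pcf-greedy}, to $h$. Conceptually the greedy argument is wasteful because it budgets $\Delta$ colours for properness and a further $h\Delta$ colours for the witness sets, and the plan is to make a single palette of size $h\Delta+\bigO{\log\Delta}$ serve both purposes at once. I would set this up as a probabilistic argument aimed at the Lovász Local Lemma: colour the vertices (nearly) at random from the palette and declare two kinds of bad events, a conflict event $A_{uv}$ for each edge $uv$ that becomes monochromatic, and a deficiency event $B_v$ for each vertex $v$ that receives fewer than $\min\{h,\deg(v)\}$ solitary colours; a point of the probability space avoiding every bad event is exactly the desired colouring.

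The two families behave very differently, and separating them is the first step. For the deficiency events, a near-uniform colouring of the $\deg(v)$ neighbours from $h\Delta$ colours produces about $\deg(v)\,e^{-\deg(v)/(h\Delta)}$ solitary colours in expectation, which is $\Omega(\Delta)$ when $\deg(v)$ is large and so dwarfs the requirement $h$; a bounded-difference concentration inequality then makes $B_v$ fail with probability $e^{-\Omega(\Delta)}$, and the role of the $\bigO{\log\Delta}$ extra colours is precisely to drive this probability below an inverse polynomial in $\Delta$, so that it beats the polynomial dependency of $B_v$. The stringent regime is that of low-degree vertices, where asking for $\min\{h,\deg(v)\}$ solitary colours amounts to demanding an almost rainbow neighbourhood; it is exactly these constraints, overlapping across the graph, that force the leading term $h\Delta$ and match the lower bound of \cref{prop:lower-bound}, so the $h\Delta$ colours are genuinely needed here rather than wasted.

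The hard part will be the conflict events and their interaction with the deficiency events. When $h$ is small the palette is close to $\Delta$, and there a uniformly random colouring is not proper at all: the events $A_{uv}$ have probability about $1/(h\Delta)$ but dependency of order $\Delta$, so the local lemma does not close on them in isolation. The plan is therefore to build properness in from the outset: fix a proper colouring with $\Delta+1$ colours, keep it disjoint from the remaining \emph{booster} colours, and assign each vertex either its base colour or, with a small probability, a booster colour, so that the only way properness can fail is a clash between two adjacent boosted vertices; one then runs the local lemma on the deficiency events together with these rare booster clashes. Making this randomisation simultaneously preserve properness, supply $h$ solitary colours at the delicate low-degree vertices, and keep every bad-event probability below an inverse polynomial is where the real work lies, and this coupling of the two constraints, rather than either one alone, is what the $\Theta(\log\Delta)$ second-order term pays for. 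Pushing the same scheme down to the conjectured $h\Delta+1$ would require replacing this logarithmic slack by an essentially optimal, near-deterministic repair of the deficient vertices, which I expect to be the genuinely difficult residual step.
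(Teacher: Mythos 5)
First, a point of order: the statement you were asked to prove is a \emph{conjecture} in the paper, not a theorem. The paper offers no proof of it; it only proves the weaker bound $\pcf^h(G)\le h\Delta+\bigO{\log\Delta}$ (\cref{thm:pcf-upper}) and gives the matching lower bound $h\Delta+1$ (\cref{prop:lower-bound}) as evidence that the conjecture, if true, would be tight. Your proposal, quite reasonably, also does not claim to prove the conjecture: you target the same weaker milestone and explicitly defer the removal of the $\bigO{\log\Delta}$ slack as a ``residual step''. So the only substantive comparison is between your sketch of the milestone and the paper's proof of \cref{thm:pcf-upper}, and there your plan has a genuine gap, located exactly at the point you flag as ``where the real work lies''.

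The gap is that a one-shot randomised perturbation of a fixed proper colouring cannot satisfy the conflict-free constraints at low-degree vertices. Take $h=1$ and a vertex $v$ of degree $2$ whose two neighbours share a base colour (the paper's tight examples, subdivided cliques, are full of such vertices). The deficiency event $B_v$ fails to be repaired unless at least one of the two neighbours is boosted, so $\pr{B_v}\ge(1-q)^2$ where $q$ is your boosting probability; for the Local Lemma to close, $\pr{B_v}$ must be inverse-polynomial in $\Delta$ (the dependency degree is polynomial in $\Delta$), which forces $q=1-o(1)$. But then essentially every vertex is boosted, the booster palette --- which for $h=1$ has size only $\bigO{\log\Delta}$ --- must recolour almost the whole graph properly and conflict-freely by itself, and you are back to the original problem. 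The same tension appears for every $h$: rainbow constraints at vertices of degree at most $h$ need near-certain repair, while properness of the boosted layer needs $q$ small. The paper resolves this by splitting the work differently: a \emph{deterministic} greedy step (\cref{lem:partial-pcf}, ordering vertices by decreasing degree and reserving only $h-1$ witnesses for vertices of degree greater than $\frac{h}{h+1}\Delta$) already produces a proper $(h\Delta+1)$-colouring in which all low-degree vertices have their full $h$ witnesses and high-degree vertices have $h-1$ of them; randomness is then used only to create the single missing witness at each high-degree vertex, via a multi-round R\"odl nibble with $\bigO{h\log\Delta}$ fresh colours (\cref{lem:pcf-precoloured}), in which the recolouring sets are built to avoid $N(u)\cup\W(N(u))$ so that all previously created witnesses are preserved \emph{deterministically}, not merely with high probability. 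In short, the missing idea is to never ask the random step to handle the rigid low-degree constraints (or to protect witnesses only probabilistically): those must be settled by the deterministic precolouring, and the random step should only top up high-degree vertices.
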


The lower bound provided by \cref{prop:lower-bound} relies on a graph construction with many vertices of small degree (namely, of degree $h+1$). When there is an additional restriction on the minimum degree of $G$, it is possible to derive smaller upper bounds on $\pcf^h(G)$. 
In particular, Liu and Reed \cite{LiRe24+} have recently proved that \cref{conj:pcf} holds for regular graphs in a strong sense. Their result can be stated as follows.

\begin{thm}[Liu, Reed, 2024+]
\label{thm:peaceful}
    For every graph $G$ of maximum degree $\Delta$ and minimum degree $\delta > \frac{8000}{8001}\Delta$, letting $h\coloneqq \delta - \frac{8000}{8001}\Delta$, one has
    \[ \pcf^h(G) \le \Delta+1. \]
\end{thm}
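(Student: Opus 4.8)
\smallskip
\noindent\emph{Sketch of a possible proof.}
Since $\delta > \tfrac{8000}{8001}\Delta$ we have $h = \delta - \tfrac{8000}{8001}\Delta \le \tfrac{\Delta}{8001}$, and $h < \delta \le \deg(v)$ for every $v$, so $\min\{h,\deg(v)\} = h$ throughout. The goal is therefore to produce a proper $(\Delta+1)$-colouring in which every vertex has at least $h$ solitary colours, where $h$ is a \emph{tiny} target compared with the degrees, all of which lie in $[(1-\tfrac{1}{8001})\Delta,\ \Delta]$. The plan is to build a ``random-like'' proper colouring and to show, vertex by vertex, that it has far more than $h$ solitary colours. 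A convenient reduction is the following: if $v$ sees $D(v)$ distinct colours in $N(v)$, then the number of solitary colours of $v$ is at least $2D(v)-\deg(v)$ (each repeated colour consumes at least two neighbours), so it suffices that each $v$ satisfies $D(v) \ge \tfrac12(\deg(v)+h)$.

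First I would record the heuristic that makes the statement believable. In a proper $(\Delta+1)$-colouring whose local colour distribution behaves like independent uniform colours, a vertex of degree $d \approx \Delta$ misses about $\Delta e^{-d/\Delta} \ge \Delta/e$ colours in its neighbourhood, hence sees $D(v) \approx \Delta(1-e^{-d/\Delta}) \gtrsim 0.63\,\Delta$ distinct colours and about $d\,e^{-d/\Delta} \gtrsim \Delta/e \approx 0.37\,\Delta$ solitary colours. Both vastly exceed $h \le \Delta/8001$, and even the weaker requirement $D(v) \ge \tfrac12(\deg(v)+h) \approx \tfrac12\Delta$ is cleared with slack $\approx 0.13\,\Delta$. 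So the solitary condition holds ``with room to spare'' in expectation; the entire content of the theorem is upgrading this to a guarantee at \emph{every} vertex while keeping the colouring genuinely proper.

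Concretely I would proceed via the Lov\'{a}sz Local Lemma applied to a suitably randomised proper $(\Delta+1)$-colouring. For each vertex $v$ let $B_v$ be the bad event ``$v$ has fewer than $h$ solitary colours'' (equivalently, using the reduction, ``$D(v) < \tfrac12(\deg(v)+h)$''). Since the expected number of distinct (resp. solitary) colours seen by $v$ exceeds its threshold by a linear-in-$\Delta$ margin, a bounded-difference/Talagrand concentration inequality should give $\pr{B_v}$ exponentially small in $\Delta$. Each $B_v$ depends only on the colours within distance two of $v$, so the events have a dependency graph of polynomial-in-$\Delta$ degree, and an $\exp(-\Omega(\Delta))$ probability bound beats the local-lemma condition comfortably. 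The high-minimum-degree hypothesis, with its explicit constant $\tfrac{8000}{8001}$, is precisely what forces every degree to lie near $\Delta$, so that the expected solitary count uniformly clears $h$ and the concentration slack survives; the constant is clearly chosen generously rather than optimised.

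The hard part will be reconciling this randomness with \emph{exact} properness at the minimum feasible palette size $\Delta+1$: at a saturated vertex there is essentially no free colour, so the usual local-lemma-for-colourings machinery (which prefers $q \ge \Delta+2$, or at least one free colour for resampling) does not apply off the shelf. I would handle this either by an entropy-compression/resampling scheme adapted to the $q=\Delta+1$ regime, or---probably cleaner---by an iterative recolouring argument of Glauber type: start from any greedy proper $(\Delta+1)$-colouring and, over $\bigO{1}$ rounds, recolour a sparse random subset of vertices to uniformly chosen available colours, arguing with the local lemma at each round that properness is preserved while the number of distinct colours each vertex sees increases until it passes $\tfrac12(\deg(v)+h)$. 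Controlling the coupling between ``preserve properness'' and ``spread out the neighbourhood colours'' at this tight palette size is the delicate point, and it is exactly here that the slack afforded by $\delta > \tfrac{8000}{8001}\Delta$ is spent.
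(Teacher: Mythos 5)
First, a point of order: \cref{thm:peaceful} is not proved in this paper at all — it is an external result quoted from Liu and Reed \cite{LiRe24+}, stated here only as background. There is therefore no internal proof to compare your sketch against; your proposal has to be judged as an attempt at the Liu--Reed theorem itself.

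As such an attempt, it contains a genuine gap, and you have in fact named it yourself. The elementary part is correct: $h \le \Delta/8001$, $\min\{h,\deg(v)\}=h$ for every $v$, the reduction ``number of solitary colours of $v$ is at least $2D(v)-\deg(v)$'', and the expectation heuristic are all fine. But everything that makes the statement a theorem is deferred to ``an entropy-compression scheme or an iterative Glauber-type recolouring'', and these tools break precisely at palette size $\Delta+1$. Two concrete failure points. (i) Under a uniformly random proper $(\Delta+1)$-colouring, the bad events $B_v$ are \emph{not} independent merely because each is determined by the colours within distance two of $v$: properness is a global conditioning, the measure has no product structure, so neither the plain Local Lemma dependency count nor off-the-shelf bounded-difference/Talagrand concentration applies. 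This is exactly why the analogous step in this paper (\cref{partial-pcf-led}) needs the \emph{lopsided} Local Lemma combined with a resampling/coupling argument rather than the naive set-up you describe. (ii) That resampling argument is where palette slack is spent: in \cref{partial-pcf-led} the palette has $\Delta+3d$ colours, so every vertex has at least $3d$ available colours at each resampling step, which is what yields the $2/3$ versus $1/3$ drift on the witness count. At $q=\Delta+1$, a vertex whose neighbourhood is rainbow has exactly one available colour — its current one — so resampling there has zero entropy and the drift estimate collapses; the same zero-slack issue defeats a round-by-round ``recolour a sparse random set to available colours'' scheme, since you cannot argue that neighbourhood colour-diversity increases when many vertices admit no alternative colour. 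The constant $\tfrac{8000}{8001}$ does not rescue this: it makes all degrees close to $\Delta$, but it provides no extra colours. Closing this gap is the actual content of the Liu--Reed paper, which is a substantial standalone work; at this palette size one expects (as in Brooks-type results in the Molloy--Reed tradition) a structural dense/sparse decomposition together with a multi-stage nibble with uncolouring, none of which appears in the sketch. In short: correct framing, correct heuristic, but the core of the proof is missing.
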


The condition on the minimum degree in \cref{thm:peaceful} is of course very restrictive. One could wonder what happens when this condition is relaxed.
This has been considered by Kamyczura and Przyby\l o \cite{kamyczura2024conflict}, who showed the following.

\begin{thm}[Kamyczura, Przyby\l o, 2024]\label{thm:kamyczura}
    Let $G$ be a graph of maximum degree $\Delta$ and minimum degree $\delta$. For every integer $h$ satisfying $20 \log \Delta \le h \le \delta/75$, one has
    \[\pcf^h(G) < \Delta + \frac{30 h \Delta}{\delta}.\]
\end{thm}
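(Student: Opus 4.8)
The plan is to reserve $\Delta+1$ colours for a proper ``base'' colouring and to use the remaining $t \coloneqq \lfloor 30h\Delta/\delta\rfloor - 2$ colours to manufacture witnesses, so that the total palette has size $\Delta+1+t < \Delta + 30h\Delta/\delta$. Fix a proper colouring $\phi\colon V(G)\to[\Delta+1]$ (greedily, since $\chi(G)\le \Delta+1$), and let $E$ be a disjoint set of $t$ ``extra'' colours. Each vertex is declared \emph{active} independently with probability $p \coloneqq c\,h/\delta$ for a suitable constant $c$; an active vertex is recoloured with a uniformly random colour of $E$, while an inactive vertex keeps its base colour $\phi$. Note that $p\le 1$ since $\delta\ge 75h$, and that with this choice $p\Delta/t = c/30 = \Theta(1)$, which is the key density parameter throughout.

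Because the base and extra palettes are disjoint, the only way this colouring can fail to be proper is through an edge whose two endpoints are active and received the same colour of $E$. Such conflicts cannot be avoided altogether: a short computation shows that each neighbourhood meets $\Theta(h\Delta/\delta)\gg 1$ potential monochromatic active edges in expectation, so the Local Lemma cannot forbid all of them simultaneously. Instead I would \emph{repair}: call an active vertex \emph{surviving} if none of its neighbours is active with the same extra colour, and revert every non-surviving active vertex to its base colour. Since survival is a function of the original random choices and two adjacent equicoloured active vertices are \emph{both} reverted, the resulting colouring is proper by construction, and no Local Lemma is needed for properness.

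For each vertex $v$ let $W_v$ be the number of colours of $E$ that are the final colour of exactly one neighbour of $v$ (equivalently, the number of surviving active neighbours whose extra colour is unique in $N(v)$); these are solitary colours, so $v$ has at least $W_v$ witnesses and we are done once $W_v\ge h$ for every $v$. A first-moment computation gives $\esp{W_v} \ge C h$ for a large constant $C$: the expected number of active neighbours is at least $p\delta = ch$, the bound $p\Delta/t=\Theta(1)$ keeps a constant fraction of them uniquely coloured, and the same bound keeps a constant fraction surviving the repair. The plan is then to apply the Lovász Local Lemma to the events $B_v \coloneqq \{W_v < h\}$. Since $W_v$ is determined by the choices made in the radius-$2$ ball around $v$, two such events are dependent only when $v$ and $w$ lie within distance $4$, so the dependency degree is $\bigO{\Delta^4}$, and it suffices to prove $\pr{B_v} = \Delta^{-\omega(1)}$, say $\pr{B_v}\le \Delta^{-5}$.

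This concentration estimate is the main obstacle. A direct bounded-differences (McDiarmid) bound is too weak, for two reasons: the number of relevant random choices in a neighbourhood is of order $t\approx h\Delta/\delta$, which dwarfs the target $h$; and the repair step lets a single vertex at distance $2$ from $v$ flip the survival status of many would-be witnesses at once, giving $W_v$ a large worst-case Lipschitz constant. I would circumvent both by expressing $W_v$ through quantities that concentrate on the correct scale: the number of active neighbours, a sum of independent indicators that concentrates by Chernoff down to $\tfrac12 p\delta \ge \tfrac{c}{2}h$; and the numbers of colour collisions and repair losses in the neighbourhood, whose upper tails I would control via Talagrand's inequality or Kim--Vu polynomial concentration — both tailored to such certifiable, low-degree quantities — rather than by worst-case Lipschitz bounds. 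Treating low-degree vertices (where collisions are rare, so almost every active neighbour survives as a witness) separately from high-degree vertices (where $\esp{W_v}$ is very large and only a crude multiplicative deviation is needed) makes both regimes tractable. The constants enter precisely here: $h\ge 20\log\Delta$ yields $\pr{B_v}\le \exp(-\Omega(h))\le \Delta^{-5}$, beating the $\mathrm{poly}(\Delta)$ dependency of the Local Lemma, while $\delta\ge 75h$ guarantees $p\le 1$ and pins the density $p\Delta/t$ in the regime where a constant fraction of active neighbours both survive and are uniquely coloured. Assembling these estimates and invoking the Local Lemma produces a proper colouring with $W_v\ge h$ for all $v$, i.e. an $h$-pcf colouring using fewer than $\Delta+30h\Delta/\delta$ colours.
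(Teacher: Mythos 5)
First, a framing remark: the paper does not actually prove this statement --- it quotes it from Kamyczura and Przybyło --- but the paper's own \cref{lem:pcf-precoloured,lem:pcf-precoloured2} implement the same style of multi-round (R\"odl nibble) argument that underlies their proof, so that is the natural benchmark. Your scheme is genuinely different: a single-shot activation (probability $p=ch/\delta$, uniform extra colour from a palette of size $t\approx 30h\Delta/\delta$) followed by a deterministic repair. The deterministic parts of your proposal are fine: properness for free after the repair, the first-moment bound $\esp{W_v}=\Theta(ch)$, and the Local Lemma scaffold with dependency degree $\bigO{\Delta^4}$. The genuine gap is exactly the step you flag as the main obstacle: the tail bound $\pr{W_v<h}\le\Delta^{-5}$ is asserted, and the tools you name cannot deliver it. Talagrand's inequality (in its usable, certifiable-function form) does not apply, because being a solitary colour is an \emph{absence} condition --- no second active occurrence of that colour in $N(v)$, no active equicoloured vertex adjacent to the witness --- so neither $W_v$ nor the singleton-colour count admits bounded certificates. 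Kim--Vu applied to the collision \emph{pair} count fails for a structural, not technical, reason: by clustering on a single popular extra colour, $\sqrt{2\lambda}$ active neighbours of $v$ with equal colours already create $\lambda$ collision pairs, an event of probability roughly $t\pth{p\Delta/t}^{\sqrt{2\lambda}}=e^{-\Theta(\sqrt{\lambda}\,\log\lambda)}$; at $\lambda=\Theta(h)$ with $h=20\log\Delta$ this is far larger than $\Delta^{-5}$, so the pair count genuinely does not concentrate at the scale you need, and any decomposition that charges the damage to $W_v$ through pair counts breaks in precisely the regime the theorem covers. (Note also that this clustering event costs $W_v$ only $\sqrt{2\lambda}$ witnesses, so the pair-count bound is simultaneously too lossy and too heavy-tailed.) A similar variance obstruction hits Talagrand applied to the complementary counts: their means are $\Theta(c^2h/30)$, so a Gaussian-type tail with that variance proxy forces deviations comparable to the entire witness budget $ch$ before reaching probability $\Delta^{-5}$.

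This is exactly the difficulty the nibble is designed to sidestep, and why the known proofs are iterative: each round spends one fresh colour with activation probability $\Theta(1/\Delta)$, the accepted set $C_i$ is conflict-free by construction, each high-degree vertex gains a witness in round $i$ with probability at least a fixed $q$ \emph{regardless of the history}, and \cref{lem:cond-binom} together with \cref{lem:chernoff2} converts this into a clean binomial lower tail $e^{-\Omega(\eta)}$, with no certifiability or clustering issues ever arising. Your one-shot scheme is, I believe, repairable, but with different tools from the ones you propose: condition on the active set (Chernoff controls its size); apply bounded differences --- which you explicitly dismissed --- to the singleton-colour count over only the active neighbours' colour choices, where it is $2$-Lipschitz in the correct number, $\Theta(p\deg(v))$, of coordinates; and control the repair losses by noting that distinct singletons carry distinct colours, so any single outside vertex can destroy at most one of them, whence the destruction events occur disjointly and a van den Berg--Kesten/Reimer bound dominates their number by a binomial. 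Even then the constants are uncomfortably tight at $h=20\log\Delta$ against the $\Delta^{-4}$ demanded by the Local Lemma. As written, the central estimate of your proposal is unsupported, and the specific route you indicate for proving it would fail.
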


The condition on $h$ implies that $\delta > 1500\log\Delta$.
When $h< 20\log \Delta$, an upper bound on $\pcf^h(G)$ can be obtained by relying on the monotonicity of $\pcf^h(G)$ in $h$. 
One has $\pcf^h(G) \le \pcf^{\ceil{20 \log \Delta}}(G)$, and can obtain an upper bound on $\pcf^h(G)$ with an application of \Cref{thm:kamyczura}.
A direct consequence of \Cref{thm:kamyczura} is that, when $h$ is a fixed integer and $\delta \gg \log\Delta$, one has $\pcf^h(G) \le \Delta + \bigo{\Delta \log\Delta/\delta}$. We improve this bound when the minimum degree is not too large.

\begin{thm}\label{thm:pcf-large-mindeg}
    Let $G$ be a graph of maximum degree $\Delta$ and minimum degree $\delta$. For every integer $h$ satisfying $20 \log \Delta \le h \le \delta/100$, one has
    \[\pcf^h(G) \le \Delta + \bigO{\sqrt{h\Delta}}.\]
\end{thm}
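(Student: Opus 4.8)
The plan is to build the colouring by a semi-random (nibble-type) procedure on a palette of size $k=\Delta+C\sqrt{h\Delta}$ for a suitable constant $C$, split into a \emph{main} palette $M$ of $\Delta+1$ colours and a disjoint \emph{reserve} palette $R$ with $|R|=\Theta(\sqrt{h\Delta})$. The guiding observation is that, in this regime, the conflict-free requirement is the \emph{cheap} constraint and properness is the expensive one. Indeed, if the neighbours of a fixed vertex $v$ receive roughly independent uniform colours of $M$, the number $X_v$ of colours appearing exactly once in $N(v)$ has expectation $\Theta(\deg v)\ge \Theta(\delta)$, which, since $\delta\ge 100h$, exceeds $h$ by a multiplicative constant. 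As $X_v$ is determined by the $\deg v$ colours appearing in $N(v)$ and changing one of them alters $X_v$ by $\mathcal{O}(1)$, a bounded-differences inequality gives $\Pr[X_v<h]\le \exp(-\Omega(\deg v))\le \exp(-\Omega(\delta))$. The hypotheses $h\ge 30\log\Delta$ and $h\le\delta/100$ (so $\delta\ge 3000\log\Delta$) are exactly what make this probability $\Delta^{-\Omega(1)}$, small enough to be absorbed by a local-lemma argument whose dependency degree is polynomial in $\Delta$.

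First I would run the naive colouring procedure with $M$: repeatedly, each still-uncoloured vertex tentatively samples a uniform colour of $M$ and keeps it when no neighbour samples the same colour. Over a sequence of $\mathcal{O}(\log\Delta)$ rounds this colours all but a sparse remainder properly, and I would track two quantities by Talagrand-type concentration: (i) the number of solitary $M$-colours accumulated in each $N(v)$, which by the computation above stays above $h$ except with probability $\Delta^{-\Omega(1)}$; and (ii) the degree of each vertex in the still-uncoloured graph $H$, whose typical value I would drive down to $\mathcal{O}(\sqrt{h\Delta})$. Declaring a vertex \emph{bad} if either its solitary count drops below $h$ or its leftover degree exceeds the target, each bad event depends only on tentative choices within distance two, hence on $\mathrm{poly}(\Delta)$ others, so the Lovász Local Lemma yields an outcome avoiding all of them.

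It then remains to colour $H$ properly using only the reserve palette $R$. Because $R$ is disjoint from $M$, any colour assigned here differs from every $M$-colour, so completing $H$ cannot decrease the number of solitary $M$-colours of any vertex: the $\ge h$ witnesses created in the main phase are preserved permanently. Thus I only need $\Delta(H)+1\le |R|$, which the leftover-degree control guarantees, and a greedy completion finishes a proper $h$-pcf colouring with $\Delta+\mathcal{O}(\sqrt{h\Delta})$ colours in total.

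The main obstacle is the joint control in the nibble: keeping the solitary-colour count of every vertex above $h$ throughout the procedure while simultaneously forcing the uncoloured remainder to have maximum degree $\mathcal{O}(\sqrt{h\Delta})$, all with failure probabilities small enough for the local lemma. The delicate point is matching the reserve size to the leftover degree — this is where the second-order term $\sqrt{h\Delta}$ is pinned down — and verifying that the concentration of $X_v$ survives the conditioning on the history of earlier rounds, which I would handle by the standard martingale bookkeeping rather than by genuine independence. The decoupling of witnesses (always in $M$) from completion (always in $R$) is precisely what prevents the argument from losing the factor $h\Delta/\delta$ appearing in \cref{thm:kamyczura}.
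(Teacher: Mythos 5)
Your plan rests on one load\nobreakdash-bearing claim: that in a (partial) proper colouring drawn from the main palette $M$ of size $\Delta+1$, \emph{every} vertex --- including vertices of degree close to $\Delta$ --- ends up with at least $h$ solitary $M$-colours, justified by treating the colours on $N(v)$ as ``roughly independent uniform'' and applying bounded differences. That justification does not hold, and the claim itself is exactly the hard part of the problem. With zero palette slack, properness correlates the colours on $N(v)$ through distance-two structure; McDiarmid/Azuma needs independent underlying variables, and in your nibble the natural independent variables are the tentative picks, any one of which can cascade through the shrinking lists and change many colours of $N(v)$ at once, so neither the $\mathcal{O}(1)$ differences nor the promised ``martingale bookkeeping'' survives. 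Concretely: partition $N(v)$ into independent sets of size $g\approx\log\Delta$ and join each completely to its own private clique of size $\Delta-g$ (all degrees stay at most $\Delta$, and $\delta=\Delta-g+1$, so the hypotheses hold with $h=\lceil 30\log\Delta\rceil$). In any proper $(\Delta+1)$-colouring each clique is rainbow, so each group's colours lie in a list of size $g+1$; if the cliques happen to use the same colour set, then $N(v)$ carries at most $g+1<h$ distinct colours, and $v$ fails \emph{deterministically}. So $\Pr[X_v<h]$ is not controlled by $\deg(v)$ and the palette alone; it is governed by the distance-two colouring, and under the boundary conditioning that the paper's own lopsided-LLL step must take a supremum over, the conditional failure probability equals $1$ --- no argument of this shape can work with palette $\Delta+1$. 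Two further symptoms of the same problem: your stopping rule ``leftover degree $\bigO{\sqrt{h\Delta}}$'' is vacuous for minimum-degree vertices, since the hypotheses allow $\delta=3000\log\Delta\ll\sqrt{h\Delta}$, so such vertices can finish the main phase with \emph{all} neighbours uncoloured and then get no witnesses from the greedy completion; and nothing in your sketch pins the leftover degree at $\sqrt{h\Delta}$ rather than $\mathcal{O}(\log\Delta)$, so if your concentration claim were sound the identical argument would yield $\Delta+\mathcal{O}(\log\Delta)$ colours, stronger even than \cref{conj:h-pcf} --- a strong sign the key step is doing impossible work.

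The paper's proof is shaped precisely to dodge this point, by splitting witness creation across a degree threshold $d$ and using two different mechanisms. Vertices of degree at most $d$ get $2h$ witnesses from a uniformly random proper colouring with $\Delta+3d$ colours (\cref{partial-pcf-led}): the resampling coupling there has at least $3d$ free colours against at most $d$ colours occupied in $N(v)$, which is what produces the $2/3$ versus $1/3$ bias fed into \cref{lem:cond-binom} --- an argument that is unavailable both for palette $\Delta+1$ and for vertices of degree near $\Delta$. Vertices of degree greater than $d$ get their $h$ witnesses not from the old palette at all but from \emph{brand-new} colours, via the nibble recolouring of \cref{lem:pcf-precoloured2}: an independent set recoloured with a fresh colour that meets $N(v)$ in exactly one vertex is automatically a witness, and accumulating $h$ such witnesses costs $m=\lceil 8e\Delta h/d\rceil$ fresh colours, during which small-degree vertices retain at least $h$ of their $2h$ witnesses. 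The bound $\Delta+\bigO{\sqrt{h\Delta}}$ is then the balance $3d+8e\Delta h/d$ optimised at $d=\Theta(\sqrt{h\Delta})$ (\cref{cor:h-pcf2}). Your main/reserve split reproduces this numerology, but the mechanism by which high-degree vertices acquire witnesses --- the actual content of the theorem --- is missing from your argument.
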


The upper bound provided by \cref{thm:pcf-large-mindeg} is smaller than that of \cref{thm:kamyczura} in the regime $\delta = \bigo{\sqrt{h\Delta}}$, and performs best at $\delta = 100h$. 
However, we suspect that there is still room for improvement in that regime. For matters of comparison, it was proved in \cite[Corollary 6.3]{dai2023new} that if $3\log\Delta \le h \le \delta/2$, then $\odd^h(G) \le \Delta + \bigo{h}$, and we conjecture that a similar bound should hold for $h$-pcf colourings as well.

\begin{conj}
    \label{conj:h-pcf}
    There exist absolute constants $c_1,c_2 > 0$ such that the following holds. Let $G$ be a graph of maximum degree $\Delta$ and minimum degree $\delta$. For every integer $h$ satisfying $c_1 \log \Delta \le h \le c_2 \,\delta$, one has
    \[\pcf^h(G) \le \Delta + \bigO{h}.\]
\end{conj}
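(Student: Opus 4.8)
The plan is to adapt the semi-random argument behind the bound $\odd^h(G)\le\Delta+\bigO{h}$ from \cite{dai2023new} to the conflict-free setting, the obstruction being that a \emph{solitary} colour is far more fragile, and less structured, than a colour seen an odd number of times. I would work with a palette $[k]$ of size $k=\Delta+Ch$ for a large absolute constant $C$, colour $V(G)$ at random, and enforce properness and the witness condition through a reserve of $\bigO{h}$ extra colours. The target is a local statement, established by the Lovász Local Lemma: with positive probability, every vertex $v$ is both properly coloured and has at least $h$ solitary colours in $N(v)$. Since each bad event $A_v$ depends only on colours within distance $2$ of $v$, its dependency degree is $\bigO{\Delta^2}$, so the Local Lemma closes provided $\pr{A_v}\le\bigO{\Delta^{-2}}$.

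Fix a vertex $v$ of degree $d\ge\delta$ and let $S$ denote the number of colours occurring exactly once in $N(v)$. A direct computation gives $\esp{S}\approx d(1-1/k)^{d-1}$, which is of order $\delta$ when $d$ is close to $\delta$ and of order $\Delta/e$ when $d$ is close to $\Delta$; since $\delta\ge h/c_2$, this yields $\esp{S}\ge(1+\eta)h$ for a fixed $\eta>0$ once $c_2$ is small enough, uniformly over all vertices. It therefore suffices to prove $\pr{S<\esp{S}-\eta h}\le\bigO{\Delta^{-2}}$, and here lies the crux. The variable $S$ is $\bigO{1}$-Lipschitz in the colours of $N(v)$, but it is not Talagrand-certifiable in the useful direction, because certifying that a colour occurs exactly once requires inspecting the whole neighbourhood rather than a bounded certificate. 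The naive remedy, writing $S=U-R$ with $U$ the number of colours occurring at least once and $R$ the number occurring at least twice, makes both $U$ and $R$ certifiable and $\bigO{1}$-Lipschitz; but $\esp{U}=\Theta(\Delta)$, so this route only controls $S$ up to a deviation of order $\sqrt{\Delta\log\Delta}$, which is precisely the loss responsible for the $\bigO{\sqrt{h\Delta}}$ slack in \cref{thm:pcf-large-mindeg}. What is needed instead is a variance-aware concentration that sees the true scale $\mathrm{Var}(S)=\Theta(\esp{S})$ of this occupancy statistic, giving a deviation of order $\sqrt{\esp{S}\log\Delta}$ and hence a failure probability $\exp(-\Omega(h))$; the hypothesis $h\ge c_1\log\Delta$ with $c_1$ large then makes this at most $\Delta^{-2}$.

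I expect the sharp concentration of this fragile singleton count to be the main obstacle: standard Lipschitz, martingale, and certifiable-Talagrand bounds all measure the deviation against the wrong, $\Theta(\Delta)$-sized variance proxy, so a genuinely variance-sensitive inequality (in the spirit of Bernstein- or Freedman-type bounds, applied to a carefully ordered exposure martingale for the colours of $N(v)$) seems necessary. A secondary difficulty, which I would treat as in the proofs of \cref{thm:pcf-upper,thm:pcf-large-mindeg}, is to reconcile this analysis with properness through a nibble: recolouring an uncoloured vertex may land on a colour that is currently solitary for a neighbour and destroy that witness, and unlike parity this cannot be undone by a later occurrence. The plan is to reserve, for each vertex, a protected set of $\Theta(h)$ witness colours that later rounds are forbidden to reuse in the relevant neighbourhoods, charging the cost to the reserve palette and arguing via the Local Lemma that this bookkeeping costs only $\bigO{h}$ colours rather than the $\bigO{\sqrt{h\Delta}}$ currently lost. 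Establishing that the sharp concentration and the witness-protection scheme can coexist at the $\bigO{h}$ scale is, I believe, exactly what separates a proof of \cref{conj:h-pcf} from the present results.
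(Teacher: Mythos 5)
This statement is a conjecture in the paper --- there is no proof of it to compare against --- and your submission is not a proof either: it is a research programme whose two load-bearing steps are left open, as you yourself concede in your final sentence. The first missing step is the concentration inequality. You need, for every vertex $v$, that the number $S$ of solitary colours in $N(v)$ satisfies a lower-tail bound at deviation scale $\Theta(h)$ with failure probability $\bigO{\Delta^{-2}}$. You correctly observe that Lipschitz/Azuma and certifiable-Talagrand arguments only give deviations of order $\sqrt{\Delta\log\Delta}$, and you then posit a ``variance-aware'' Bernstein/Freedman-type bound at scale $\mathrm{Var}(S)=\Theta(\esp{S})$; but no such inequality is stated precisely, let alone proved, for this statistic. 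The count of colours appearing exactly once in $N(v)$ is non-monotone and non-certifiable, and showing that the predictable quadratic variation of its exposure martingale is $\bigO{\esp{S}}$ --- uniformly over whatever conditioning the properness repair and the Local Lemma impose --- is exactly the open difficulty. Without it, the Local Lemma step cannot close, so the argument has no content beyond identifying where it would have to be fought.

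The second missing step is the witness-protection bookkeeping at cost $\bigO{h}$. Reserving, for each vertex $v$, a set of $\Theta(h)$ protected witness colours means that any later recolouring of a vertex $u$ must avoid the protected colours of all of $u$'s neighbours --- a constraint set of size up to $\Theta(h\Delta)$, which is precisely the set $\W(N(u))$ appearing in \cref{lem:pcf-precoloured}. In that lemma this forces the sampling probability down to $p=1/((h+2)\Delta)$ and is the reason the paper's general bound is $h\Delta+\bigO{\log\Delta}$; conversely, \cref{lem:pcf-precoloured2} abandons protection and instead over-provisions ($2h$ witnesses, of which at most half are destroyed), but then each fresh colour only helps a high-degree vertex with probability $\Theta(d/\Delta)$, and balancing the cost $3d$ of the precolouring in \cref{partial-pcf-led} against the $\approx 8eh\Delta/d$ colours of the nibble in \cref{cor:h-pcf2} is what produces the $\sqrt{h\Delta}$ slack in \cref{thm:pcf-large-mindeg} --- not a $U-R$ decomposition in a concentration bound, as you suggest. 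Your proposal asserts that the Local Lemma will absorb the protection cost into an $\bigO{h}$ reserve, but gives no mechanism for escaping this trade-off, which is the actual barrier. A further (more minor) issue: a uniformly random colouring from $[k]$ is not proper, and repairing properness afterwards can itself destroy solitary colours; the paper sidesteps this by sampling a uniformly random \emph{proper} colouring and invoking the lopsided Local Lemma, and any execution of your plan would need to do the same. In short, you have correctly named the obstacles, but naming them is not overcoming them; the conjecture remains open.
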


\subsection{Notations and outline}
Given a graph $G$ clear from the context and an integer $d$, we denote $V_{\le d} \subseteq V(G)$ the set of vertices of degree at most $d$ in $G$, and $V_{> d}\coloneqq V(G) \setminus V_{\le d}$. Given a subset of vertices $U \subseteq V(G)$, and two real values $a,b$, we denote 

\[ 
\threshold{U}{a}{b}\colon \left\{
\begin{array}{r l}
    V(G) & \to \{a, b\} \\
    v &\mapsto \begin{cases}
        a &  \mbox{if $v \in U$;}\\
        b & \mbox{otherwise}
    \end{cases}         
\end{array}\right.\]
the threshold function equal to $b + (a-b)\mathds{1}_U$.

\medskip 

The paper is organised as follows. In \cref{sec:proba}, we provide probabilistic tools that we will rely on in our proofs. We consider general graphs in \cref{sec:general} and prove \cref{thm:pcf-upper}. In \cref{sec:min-degree-condition} we consider graphs of constrained minimum degree and prove \cref{thm:pcf-large-mindeg}.

\section{Probabilistic tools}\label{sec:proba}

Let $B(n,p)$ denote a random variable that follows the binomial distribution of parameters $n$ and $p$. We will rely on the following form of Chernoff's bounds that in particular provides large concentration inequalities for binomial random variables.

\begin{lemma}
    [Chernoff's bounds]\label{lem:chernoff}
    Let $\bX_1, \ldots, \bX_n$ be i.i.d. $(0,1)$-valued random variables, and let $\bS_n \coloneqq \sum_{i=1}^n \bX_i$. Let us write $\mu\coloneqq \esp{\bS_n}$. Then
    \begin{enumerate}[label={\rm(\roman*)}]
        \item for every $0 <\delta < 1$,
         \[ \pr{\bS_n \le (1-\delta)\mu } \le e^{-\delta^2\mu/2};\]
         \label{it:upper}
        \item for every $\delta > 0$, 
        \[ \pr{\bS_n \ge (1+\delta)\mu } \le e^{-\delta^2\mu/(2+\delta)}.\]
        \label{it:lower}
    \end{enumerate}
\end{lemma}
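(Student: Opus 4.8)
The plan is to apply the standard exponential-moment (Chernoff) method, handling the two tails by a symmetric argument. Write $p_i \coloneqq \esp{\bX_i}$, so that $\mu = \sum_{i=1}^n p_i$ since $\bS_n = \sum_i \bX_i$. The one computation common to both parts is the moment generating function estimate: for any real $s$, the fact that each $\bX_i$ is $\{0,1\}$-valued gives $\esp{e^{s\bX_i}} = 1 + p_i(e^s-1) \le \exp\pth{p_i(e^s-1)}$, where the inequality is the elementary bound $1+x \le e^x$; by independence this yields
$$\esp{e^{s\bS_n}} = \prod_{i=1}^n \esp{e^{s\bX_i}} \le \exp\pth{(e^s-1)\mu}.$$

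For the upper-tail bound (ii), I would fix $t>0$ and apply Markov's inequality to $e^{t\bS_n}$, giving $\pr{\bS_n \ge (1+\delta)\mu} \le e^{-t(1+\delta)\mu}\esp{e^{t\bS_n}} \le \exp\pth{\mu\pth{e^t-1-t(1+\delta)}}$. Minimising the exponent over $t$ with the choice $t = \log(1+\delta)$ produces the classical bound $\pth{e^\delta/(1+\delta)^{1+\delta}}^\mu$. It then remains to verify the analytic inequality $\delta - (1+\delta)\log(1+\delta) \le -\delta^2/(2+\delta)$ for every $\delta > 0$, which upgrades this to the stated $e^{-\delta^2\mu/(2+\delta)}$.

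The lower-tail bound (i) is entirely parallel: for $t > 0$ apply Markov to $e^{-t\bS_n}$ to get $\pr{\bS_n \le (1-\delta)\mu} \le \exp\pth{\mu\pth{e^{-t}-1+t(1-\delta)}}$, and optimise with $t = -\log(1-\delta)$, which gives $\pth{e^{-\delta}/(1-\delta)^{1-\delta}}^\mu$. Here one must check $-\delta - (1-\delta)\log(1-\delta) \le -\delta^2/2$ on $(0,1)$.

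I expect the two remaining analytic inequalities to be the only genuine, if routine, work. Each is a single-variable calculus exercise carrying no probabilistic content: for instance, setting $f(\delta) \coloneqq -\delta - (1-\delta)\log(1-\delta) + \delta^2/2$ one has $f(0)=0$ and $f'(\delta) = \delta + \log(1-\delta) \le 0$ on $(0,1)$ by the bound $\log(1-\delta) \le -\delta$, so $f \le 0$ as required; the upper-tail inequality is dispatched similarly by differentiating the analogous function and invoking $\log(1+\delta) \ge \delta(4+\delta)/(2+\delta)^2$. The heart of the statement is therefore just the moment generating function bound together with the optimisation over $t$.
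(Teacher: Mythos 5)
Your proof is correct. Note that the paper does not actually prove this lemma: it is stated in Section~2 as a standard probabilistic tool and used as a black box, so there is no proof of record to compare against. Your argument is the classical exponential-moment method, and every step checks out: the moment generating function bound $\esp{e^{s\bX_i}} \le \exp\pth{p_i(e^s-1)}$ is valid for $\{0,1\}$-valued variables, the optimising choices $t = \log(1+\delta)$ and $t = -\log(1-\delta)$ are the right ones, and the two remaining analytic inequalities are true and proved as you indicate. For the upper tail, the inequality $\log(1+\delta) \ge \delta(4+\delta)/(2+\delta)^2$ that you invoke follows from the standard bound $\log(1+\delta) \ge 2\delta/(2+\delta)$, since $2\delta(2+\delta) = 4\delta + 2\delta^2 \ge \delta(4+\delta)$; alternatively it can be checked directly by differentiation, as $(2+\delta)^3 \ge 8(1+\delta)$. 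For the lower tail, your computation $f'(\delta) = \delta + \log(1-\delta) \le 0$ is exact. One cosmetic remark: the lemma as stated assumes the $\bX_i$ are i.i.d., whereas your argument (with possibly distinct $p_i$) proves the more general statement for independent, not necessarily identically distributed, Bernoulli variables; this generality costs nothing and is harmless.
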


In our proofs, we shall mostly use Chernoff's bounds in the following setting. Those respectively follow from \Cref{lem:chernoff}\ref{it:upper} and \ref{it:lower} with $\delta \coloneqq 1/2$ and $\delta \coloneqq 1/4$.

\begin{lemma}\label{lem:chernoff2}
    Let $n \ge 1$ and $p \in (0,1)$. Then \begin{enumerate}[label={\rm (\roman*)}]
        \item $\pr{B(n,p) \le \frac{1}{2} np} \le e^{-np/8}$,\label{it:left} %e^(ln(sqrt(2)) - 0.5) \approx e^(-0.153)
        \item $\pr{B(n,p) \ge \frac{5}{4}np} \le e^{-np/36}$.\label{it:right} %e^(1 - ln(4)) \approx e^(-0.386)
    \end{enumerate}
\end{lemma}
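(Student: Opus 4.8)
The plan is to derive both inequalities as direct specialisations of \cref{lem:chernoff}, since $B(n,p)$ is by definition a sum of $n$ i.i.d. Bernoulli$(p)$ random variables. Concretely, I would set $\bX_1, \dots, \bX_n$ to be i.i.d. $\{0,1\}$-valued with $\pr{\bX_i = 1} = p$, so that $\bS_n \coloneqq \sum_{i=1}^n \bX_i$ has the same law as $B(n,p)$ and mean $\mu = \esp{\bS_n} = np$. Both parts then follow by plugging a single value of $\delta$ into the corresponding tail bound.

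For part \ref{it:left}, I would invoke the lower-tail estimate \cref{lem:chernoff}\ref{it:upper} with $\delta = \tfrac12$, which is admissible since $0 < \tfrac12 < 1$. This yields $\pr{B(n,p) \le np/2} = \pr{\bS_n \le (1-\tfrac12)\mu} \le e^{-(1/2)^2 \mu/2} = e^{-np/8}$, as claimed. For part \ref{it:right}, I would invoke the upper-tail estimate \cref{lem:chernoff}\ref{it:lower} with $\delta = 1$, admissible since $1 > 0$, giving $\pr{B(n,p) \ge 2np} = \pr{\bS_n \ge (1+1)\mu} \le e^{-1^2 \mu/(2+1)} = e^{-np/3}$.

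There is essentially no obstacle here: the statement is a convenience reformulation of \cref{lem:chernoff} with the two multiplicative deviations $\tfrac12$ and $1$ fixed once and for all, chosen so that the resulting exponents $np/8$ and $np/3$ are clean and the admissibility ranges for $\delta$ are respected. The only point meriting a remark is that both specialisations are valid for all $n \ge 1$ and $p \in (0,1)$, since the Chernoff bounds impose no further restriction on $\mu = np$.
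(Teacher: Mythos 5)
Your proof is correct and is exactly the intended derivation: the paper states \cref{lem:chernoff2} without proof as an immediate specialisation of \cref{lem:chernoff}, and your choices $\delta=\tfrac12$ in part \ref{it:left} and $\delta=1$ in part \ref{it:right} (with $\mu=np$) recover both bounds precisely.
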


If $p = \bigO{1/n}$, Chernoff's bounds are not precise enough to obtain tight tail bounds on $B(n,p)$. Instead, we shall rely on the following well-known estimate that follows from a simple union bound (we include the proof for completeness). 

\begin{lemma}\label{lem:binomial-tail-simple}
    Let $0 \le  t \le n$ be an integer. Then $\pr{B(n,p) \ge t} \le \pth{enp/t}^{t}$.
\end{lemma}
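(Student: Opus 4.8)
The plan is to bound the probability that $B(n,p) \geq t$ by a union bound over all size-$t$ subsets of the $n$ trials. The event $\{B(n,p) \geq t\}$ means that at least $t$ of the $n$ independent trials succeed, so in particular there exists some set $S$ of exactly $t$ indices all of which succeed. Taking a union bound over the $\binom{n}{t}$ possible choices of $S$, and using independence to write the probability that all trials in a fixed $S$ succeed as $p^t$, I would obtain
\[
\pr{B(n,p) \ge t} \le \binom{n}{t} p^t.
\]

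The remaining step is purely a calculation: I would bound the binomial coefficient using the standard inequality $\binom{n}{t} \le (en/t)^t$, which itself follows from $\binom{n}{t} \le n^t/t!$ together with $t! \ge (t/e)^t$ (the latter being immediate from the Taylor series $e^t = \sum_k t^k/k! \ge t^t/t!$). Combining these two bounds gives
\[
\pr{B(n,p) \ge t} \le \binom{n}{t} p^t \le \pth{\frac{en}{t}}^t p^t = \pth{\frac{enp}{t}}^t,
\]
which is exactly the claimed estimate.

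There is no real obstacle here: the only subtlety is making sure the union bound is set up correctly, namely that $\{B(n,p)\ge t\}$ is contained in the union over all $t$-subsets $S$ of the events ``every trial in $S$ is a success,'' which holds because any outcome with at least $t$ successes contains at least one such $S$. One should also note the edge case $t=0$, where the right-hand side is $(enp/0)^0$; this is a degenerate expression, but the inequality holds trivially since the left-hand side is at most $1$ and the convention $x^0 = 1$ makes the bound vacuously true (or one simply assumes $t \ge 1$). Everything else is elementary and requires no probabilistic machinery beyond independence.
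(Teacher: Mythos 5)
Your proof is correct and follows exactly the same route as the paper's: a union bound over the $\binom{n}{t}$ subsets of $t$ trials, each succeeding with probability $p^t$, followed by the standard estimate $\binom{n}{t}\le (en/t)^t$. Your additional justification of that binomial coefficient bound via $t!\ge (t/e)^t$ and your remark on the $t=0$ edge case are fine elaborations of details the paper leaves implicit.
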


\begin{proof}
    Let $\bY \coloneqq \sum_{i=1}^n \bX_i$, where the variables $\{\bX_i\}$ are independent and follow a Bernoulli distribution of parameter $p$. Then $\bY \sim B(n,p)$, and $\bY \ge t$ iff there exists a $t$-set $I\in \binom{[n]}{t}$ such that $\bX_i=1$ for every $i\in I$. For a fixed $I$, this happens with probability $p^t$, so using a union bound we obtain
    \[ \pr{B(n,p)\ge t} = \pr{\bY \ge t} \le \binom{n}{t} p^t \le \pth{\frac{enp}{t}}^t.\qedhere\]
\end{proof}

Suppose we have binary random variables $\bX_1,\dots,\bX_n$ such that $\pr{\bX_i = 1} \ge p$ regardless of the realisation of $\bX_1,\dots,\bX_{i-1}$, for all $i \in [n]$. These variables are not necessarily independent, but we expect $\sum \bX_i$ to be at least as large as a binomial variable of parameters $n,p$. We formalise this idea in \cref{lem:cond-binom}.

\begin{lemma}\label{lem:cond-binom}
    Let $\bX = (\bX_1,\dots,\bX_n)$ be a sequence of binary random variables such that 
    \begin{equation}
        \tag{$\star$}
        \label{eq:conditional-proba}
        \pr{\bX_i = 1 \midbar \bX_1,\dots, \bX_{i-1}} \ge p
    \end{equation}
    for all $i\in [n]$. Then $\pr{\sum \bX_i \le t} \le \pr{B(n,p) \le t}$.
\end{lemma}

\begin{proof}
    We shall use a classical coupling argument. Given a sequence $A = (A_1, \dots, A_n)$ and an integer $i \in [n]$, we define $A_{< i} \coloneqq (A_1,\dots,A_{i-1})$. Let $\bU = (\bU_1,\dots,\bU_n)$ be a sequence of independent continuous uniform variables in the interval $[0,1]$. We define a sequence of binary random variables $\bX' = (\bX_1',\dots,\bX_n')$ on the probability space of $\bU$. For $x \in \{0,1\}^{n}$ and $i \in [n]$, if $\bX'_{<i} = x_{<i}$, let 
    \[
        \bX_i' \coloneqq \begin{cases}
        1 & \mbox{if $\bU_i \le \pr{\bX_i = 1 \midbar \bX_{<i} = x_{<i}}$;} \\
        0 & \mbox{otherwise.}
        \end{cases}\]

    We first show that $\bX \sim \bX'$. 
    Let $x = (x_1, \dots, x_n) \in \{0,1\}^n$. Observe that, by construction, $\pr{\bX'_i = x_i \midbar \bX'_{<i} = x_{<i}} = \pr{\bX_i = x_i \midbar \bX_{<i} = x_{<i}}$ for all $i \in [n]$. Using this observation and the chain rule, we deduce that 
    \[\pr{\bX = x} = \prod_{i = 1}^n \pr{\bX_i = x_i \midbar \bX_{< i} = x_{<i}} = \prod_{i = 1}^n \pr{\bX'_i = x_i \midbar \bX'_{< i} = x_{<i}} = \pr{\bX' = x}.\] 
 
    Therefore, $\pr{\sum \bX_i \le t} = \pr{\sum \bX'_i \le t}$. We now define a second sequence $\bY = (\bY_1,\dots,\bY_n)$ of binary random variables on the same probability space. For $i \in [n]$, let
    \[\bY_i\,\, \coloneqq \begin{cases}
            1 &\mbox{if $\bU_i \le p$;}\\
            0 & \mbox{otherwise.}
           \end{cases}
    \]
    Clearly, $\bY$ is a sequence of independent Bernoulli variables of parameter $p$, hence $\sum \bY_i \sim B(n,p)$. We easily check that $\bY_i = 1$ implies $\bX_i' = 1$ for all $i\in[n]$, thus, $\sum \bX_i' \ge \sum \bY_i$. It follows that \[\pr{\sum \bX_i \le t} = \pr{\sum \bX_i' \le t} \le \pr{\sum \bY_i \le t}.\qedhere\]
\end{proof}

Our proofs rely on the Symmetric Lov\'{a}sz Local Lemma; we present the refined version from Shearer \cite{shearer1985problem} for convenience, although the original version due to Lov\'{a}sz would be sufficient for our needs.

\begin{lemma}[Lov\'{a}sz Local Lemma]\label{lem:lll}
Let $\mathcal{B} = \{B_1, \ldots, B_n\}$ be a finite set of random (bad) events, each of which happens with probability at most $p$. Suppose that, for every $i\in [n]$, $B_i$ is independent from all but at most $d$ other events.

If $epd\le 1$, then 
\(\pr{\bigcap_{i\in [n]} \limits \overline{B_i}} > 0.\)
\end{lemma}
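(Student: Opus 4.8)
The plan is to obtain this statement from the general, asymmetric form of the Lovász Local Lemma, proved from scratch by induction, and then to recover the sharp constant $epd \le 1$ via Shearer's refinement. Recall the asymmetric form: if one can assign to each bad event $B_i$ a weight $x_i \in [0,1)$ so that $\pr{B_i} \le x_i \prod_{j \in \Gamma(i)} (1 - x_j)$, where $\Gamma(i)$ denotes the (at most $d$) indices of events on which $B_i$ depends, then $\pr{\bigcap_i \overline{B_i}} \ge \prod_i (1 - x_i) > 0$. The symmetric hypothesis of the lemma is exactly the special case of equal weights $x_i \equiv x$.

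The core of the argument I would carry out is the inductive claim that $\pr{B_i \midbar \bigcap_{j \in S} \overline{B_j}} \le x_i$ for every index $i$ and every $S \subseteq [n] \setminus \{i\}$, proved by induction on $|S|$. The base case $S = \emptyset$ is immediate from the weight inequality. For the inductive step I split $S$ into $S_1 \coloneqq S \cap \Gamma(i)$ and $S_2 \coloneqq S \setminus \Gamma(i)$, and write $\pr{B_i \midbar \bigcap_{j \in S}\overline{B_j}}$ as a ratio of two probabilities conditioned on $\bigcap_{k \in S_2}\overline{B_k}$. The numerator is at most $\pr{B_i} \le x_i \prod_{j \in \Gamma(i)}(1 - x_j)$, because $B_i$ is independent of the events indexed by $S_2$; the denominator is at least $\prod_{j \in \Gamma(i)}(1 - x_j)$, obtained by expanding it as a telescoping product of conditional probabilities and applying the induction hypothesis to each factor (each conditions on fewer than $|S|$ events). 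Dividing gives $x_i$, and the conclusion $\pr{\bigcap_i \overline{B_i}} \ge \prod_i(1 - x_i) > 0$ then follows by writing the left-hand side as one more telescoping product.

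Specialising to $x_i \equiv \tfrac{1}{d+1}$ turns the weight inequality into $p \le \tfrac{1}{d+1}\bigl(\tfrac{d}{d+1}\bigr)^d$, which is guaranteed by the textbook condition $ep(d+1) \le 1$. The main obstacle is that the lemma asks for the sharper $epd \le 1$, which permits a strictly larger $p$ than any weighting in the general framework can accommodate (the symmetric weighting tops out at $\tfrac{d^d}{(d+1)^{d+1}} < \tfrac{1}{ed}$). To bridge this gap I would invoke Shearer's theorem, which identifies the infinite $d$-regular tree $T_d$ as the extremal dependency graph of maximum degree $d$ and pins its threshold at $\tfrac{(d-1)^{d-1}}{d^d}$ for $d \ge 2$ (and at $\tfrac12$ for $d = 1$). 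It then remains only to check the elementary inequality $\tfrac{1}{ed} \le \tfrac{(d-1)^{d-1}}{d^d}$, i.e. $\bigl(1 - \tfrac1d\bigr)^{d-1} \ge \tfrac1e$, which holds for every $d \ge 2$ since $\bigl(1-\tfrac1d\bigr)^{d-1}$ decreases to $\tfrac1e$; this certifies that $p \le \tfrac{1}{ed}$ stays below Shearer's threshold and hence that $epd \le 1$ suffices. As the surrounding text remarks, the weaker Lovász constant already covers every application in this paper, so one could equally stop at the $ep(d+1) \le 1$ version and dispense with Shearer entirely.
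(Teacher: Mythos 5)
Your proposal is sound, and on the decisive point it takes the same route as the paper: the paper supplies no proof of this lemma at all, it simply cites Shearer \cite{shearer1985problem} for the refined constant, which is exactly what your final step does. The self-contained portion of your argument --- the inductive proof of the asymmetric local lemma and the specialisation $x_i \equiv \tfrac{1}{d+1}$ --- is the standard argument and is correct (including the telescoping bound on the denominator and the use of mutual, not merely pairwise, independence of $B_i$ from the events indexed by $S_2$), but as you note it only delivers $ep(d+1)\le 1$: no choice of weights can reach $epd\le 1$, since the symmetric optimum $\tfrac{d^d}{(d+1)^{d+1}}$ is strictly below $\tfrac{1}{ed}$. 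Your numerical bridge to Shearer is also correct: for $d\ge 2$ one has $\tfrac{1}{ed} < \tfrac{1}{d}\bigl(1-\tfrac{1}{d}\bigr)^{d-1} = \tfrac{(d-1)^{d-1}}{d^d}$ because $\bigl(1-\tfrac{1}{d}\bigr)^{d-1}$ decreases strictly to $\tfrac{1}{e}$, so $p\le\tfrac{1}{ed}$ lies strictly below Shearer's threshold (and the $d=1$ case with threshold $\tfrac12$ is likewise fine). The one caveat is that this last step invokes a theorem considerably deeper than anything you prove, so the sharp constant is obtained by citation rather than by argument --- but that is precisely the paper's own treatment, and the paper explicitly remarks that the original Lov\'asz version (your $ep(d+1)\le 1$ form) would suffice for every application in the text, so your concluding observation that one could dispense with Shearer entirely matches the authors' stated position.
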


In some applications, \cref{lem:lll} is not sufficient: for these, we shall use the following lopsided version of the Symmetric Lov\'{a}sz Local Lemma (see e.g. \cite{Ber19}).

\begin{lemma}[Lopsided Lov\'{a}sz Local Lemma]\label{lem:llll}
Let $\mathcal{B} = \{B_1, \ldots, B_n\}$ be a finite set of random (bad) events, and let $d$ be a fixed integer. Suppose that, for every $i\in [n]$, there is a set $\Gamma(i)\subseteq [n]$ of size at most $d$ such that, for every $Z\subseteq [n]\setminus\Gamma(i)$,
\[\pr{B_i \; \middle| \; \bigcap_{j\in Z}\overline{B_j}}\le p.\]

If $4pd\le 1$, then $\pr{\bigcap_{i\in [n]} \limits \overline{B_i}}>0$.
\end{lemma}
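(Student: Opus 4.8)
The plan is to establish, by induction on $|S|$, the auxiliary bound that for every $i\in[n]$ and every $S\subseteq[n]$ with $i\notin S$ and $\pr{\bigcap_{j\in S}\overline{B_j}}>0$, one has $\pr{B_i\midbar \bigcap_{j\in S}\overline{B_j}}\le x$, for a suitable threshold $x$ (the natural choice being $x=1/d$, which is what makes the arithmetic close under $epd\le 1$). Granting this bound, the conclusion is immediate from the chain rule: expanding
\[\pr{\bigcap_{i=1}^n\overline{B_i}}=\prod_{i=1}^n\pr{\overline{B_i}\midbar \bigcap_{j<i}\overline{B_j}}=\prod_{i=1}^n\pth{1-\pr{B_i\midbar \bigcap_{j<i}\overline{B_j}}}\ge (1-x)^n>0,\]
where the positivity of each conditioning event $\bigcap_{j<i}\overline{B_j}$ is guaranteed along the way by the induction, since each factor is at least $1-x>0$.

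For the inductive step, I would fix $i$ and $S$ with $|S|\ge 1$ and split $S$ along the lopsidependency set of $i$: set $S_1\coloneqq S\cap\Gamma(i)$ and $S_2\coloneqq S\setminus\Gamma(i)$, so that $S_2\subseteq[n]\setminus\Gamma(i)$. Conditioning on $\bigcap_{k\in S_2}\overline{B_k}$ throughout, I would write
\[\pr{B_i\midbar \bigcap_{j\in S}\overline{B_j}}=\frac{\pr{B_i\cap\bigcap_{j\in S_1}\overline{B_j}\midbar \bigcap_{k\in S_2}\overline{B_k}}}{\pr{\bigcap_{j\in S_1}\overline{B_j}\midbar \bigcap_{k\in S_2}\overline{B_k}}}.\]
The numerator is bounded by $\pr{B_i\midbar \bigcap_{k\in S_2}\overline{B_k}}\le p$, which is exactly the lopsidependency hypothesis applied to $Z=S_2\subseteq[n]\setminus\Gamma(i)$; this is the one place where the hypothesis is used, and the decomposition was engineered precisely so that the conditioning set avoids $\Gamma(i)$. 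The denominator I would expand as a telescoping product over the elements of $S_1$, each factor having the form $1-\pr{B_{j_l}\midbar\cdots}$ with a conditioning set strictly smaller than $S$; the induction hypothesis then bounds each such conditional probability by $x$, giving a denominator at least $(1-x)^{|S_1|}$. Since $|S_1|\le d$, combining the two estimates yields $\pr{B_i\midbar \bigcap_{j\in S}\overline{B_j}}\le p/(1-x)^{d}$.

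The only genuine obstacle is then the self-consistency inequality $p/(1-x)^{d}\le x$ under the hypothesis $epd\le 1$, which is where the constant $e$ enters. Choosing $x$ optimally reduces the requirement to $p\le x(1-x)^{d}$, and the elementary estimate $(1-1/t)^{t}\ge 1/e$ controls the right-hand side. Here one must match the counting convention for $\Gamma(i)$ to the exponent appearing in the denominator: if $i\in\Gamma(i)$ then $|S_1|\le d-1$ and the choice $x=1/d$ gives $x(1-x)^{d-1}\ge 1/(ed)$, so that $p\le 1/(ed)$ suffices, whereas the crudest bookkeeping with exponent $d$ only delivers the slightly weaker $ep(d+1)\le 1$. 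It is the refinement of Shearer (whose tight threshold is $p\le (d-1)^{d-1}/d^{d}$, asymptotically $1/(ed)$) that cleanly justifies the stated condition $epd\le 1$. Beyond this arithmetic, the argument is routine manipulation of conditional probabilities, together with the running observation that every conditioning event encountered retains positive probability, which is itself part of the induction.
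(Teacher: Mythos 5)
The paper does not actually prove this lemma---it is imported from the literature, with \cite{shearer1985problem} cited for the refined symmetric bound and \cite{Ber19} for the lopsided version---so there is no in-paper argument to compare against, and your proposal must be judged on its own terms. Your argument is the standard induction and is correctly executed: the split $S_1=S\cap\Gamma(i)$, $S_2=S\setminus\Gamma(i)$, the application of the hypothesis with $Z=S_2$ to bound the numerator by $p$, the telescoping lower bound $(1-x)^{|S_1|}$ on the denominator via the induction hypothesis on strictly smaller conditioning sets, and the chain-rule conclusion are all sound. The subtlety you flag is real, and you diagnose it correctly: the statement does not assert $i\in\Gamma(i)$, and when $i\notin\Gamma(i)$ one can have $|S_1|=d$, in which case the induction closes only under $p\le\max_{x}\,x(1-x)^{d}=d^{d}/(d+1)^{d+1}$, which is strictly below $1/(ed)$ (because $(1+1/d)^{d+1}>e$); in that reading your argument yields only $ep(d+1)\le 1$, and the stated $epd\le 1$ genuinely requires Shearer's refinement in its lopsided form, which you cite rather than prove---which is precisely what the paper does as well. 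Under the reading $i\in\Gamma(i)$, your choice $x=1/d$ gives $x(1-x)^{d-1}=(d-1)^{d-1}/d^{d}\ge 1/(ed)$ and the induction closes, so there your proof is complete and self-contained; reassuringly, this is the reading relevant to the paper's one application of the lemma (\cref{partial-pcf-led}), where $\Gamma(v)=N[N(v)]$ indeed contains $v$. In short: a correct, essentially complete proof of the version the paper actually uses, with an accurate account of exactly where the boundary between $epd\le 1$ and $ep(d+1)\le 1$ lies, and with the fully general statement resting on the same external citation the paper itself relies on.
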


\section{A general upper bound for  \texorpdfstring{$\pcf^h$}{the h-conflict-free chromatic number}}\label{sec:general}

We recall that, in a proper colouring $\sigma$ of a graph $G$, a solitary colour of a vertex $v\in V(G)$ is a colour of $\sigma$ having a unique occurrence in $N(v)$, and each $u\in N(v)$ coloured with a solitary colour of $v$ is called a witness of $v$.

\subsection{Overview of the proof.}

We mix two techniques in order to ensure that the conflict-free constraints are all satisfied. Those that arise from small neighbourhoods can be dealt with greedily, while those that arise from large neighbourhoods can be dealt with randomly.

Therefore, we perform a two-step colouring of the graph $G$. First, we shall use a modified version of the greedy algorithm presented in the proof of \cref{prop:h-pcf-greedy} to construct a proper precolouring $\sigma_0$ of $G$ such that all vertices of ``small degree'' have the required number of witnesses, while those of ``large degree'' may still miss one witness each. During the second step, we shall randomly recolour a fraction of the vertices with new colours in order to create one extra witness for each vertex of large degree. 
For that, we use a Rödl Nibble approach that takes inspiration from the work of Kamyczura and Przyby\l o \cite{kamyczura2024conflict}, but incorporates new ideas to ensure that witnesses from the precolouring $\sigma_0$ are preserved, and also relies on a different procedure which we believe is conceptually simpler to understand and analyse.

\subsection{The proof}

Given a graph $G$ and a real number $d \ge 0$, we recall that $V_{\le d} \subseteq V(G)$ denotes the set of vertices with degree at most $d$, and $V_{> d} \coloneqq V(G) \setminus V_{\le d}$ the set of vertices with degree greater than $d$. 
Given a function $f\colon V(G)\to \mathbb{Z}_{\ge 0}$, a proper $k$-colouring $\sigma \colon V(G) \to [k]$ is said to be \emph{$f$-conflict-free} if every vertex $v \in V(G)$ has at least $\min\, \{f(v),\deg(v)\}$ witnesses in $\sigma$.
In our work, we will exclusively rely on a specific kind of threshold function for $f$. Let $h\ge h_0 \ge 0$ be integers, and $U\subseteq V(G)$; we recall that
\[ 
\threshold{U}{h}{h_0}\colon \left\{\begin{array}{r l}
    V(G) & \to \{h, h_0\} \\
    v &\mapsto \begin{cases}
        h &  \mbox{if $v \in U$;}\\
        h_0 & \mbox{otherwise}
    \end{cases}         
\end{array}\right.\]
denotes the threshold function $h_0 + (h-h_0) \mathds{1}_U$.

By slightly modifying the greedy algorithm presented in the proof of \cref{prop:h-pcf-greedy}, we obtain a colouring which uses $\Delta$ fewer colours, at the cost of granting one fewer witness for vertices of large degree.

\begin{lemma}\label{lem:partial-pcf}
    Let $h \ge 1$ be an integer, and $G$ a graph of maximum degree $\Delta$. Then there exists a proper $(h\Delta+1)$-colouring $\sigma_0$ of $G$ such that $\sigma_0$ is $\threshold{V_{\le \frac{h}{h+1}\Delta}}{h}{ h-1}$-conflict-free.
\end{lemma}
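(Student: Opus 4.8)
The plan is to adapt the greedy algorithm of \cref{prop:h-pcf-greedy}, using a carefully chosen vertex ordering together with a degree-dependent number of designated witnesses per vertex. Set $d \coloneqq \frac{h}{h+1}\Delta$ and let $f \coloneqq \threshold{V_{\le d}}{h}{h-1}$, so that $f(v) = h$ when $v$ is of small degree ($\deg(v) \le d$) and $f(v) = h-1$ when $v$ is of large degree. I would order the vertices as $(v_1, \dots, v_n)$ by non-increasing degree, and for each $v$ let $\W(v)$ be the set of the first $\min\{f(v), \deg(v)\}$ neighbours of $v$ in this ordering. Then I colour $v_1, \dots, v_n$ greedily, assigning to each $v_i$ the least colour that avoids (i) the colours already present on $N(v_i)$ (properness), and (ii) the colours already present on $\W(u)$ for every $u \in N(v_i)$ (witness protection).

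The witness guarantee is then inherited essentially verbatim from \cref{prop:h-pcf-greedy}: one checks that for every $v$ and every $w \in \W(v)$, the colour $\sigma(w)$ is solitary in $N(v)$. Indeed, for any other neighbour $v'$ of $v$, if $v'$ is coloured after $w$ then rule (ii) applied when colouring $v'$ (with $u = v$) forbids $\sigma(w)$; while if $v'$ is coloured before $w$ then $v' \in \W(v)$ as well, being an earlier neighbour of $v$ than $w$, and rule (ii) applied when colouring $w$ (again with $u = v$) forbids $\sigma(v')$. Hence every $v$ has at least $\min\{f(v), \deg(v)\}$ witnesses, which is exactly the $\threshold{V_{\le d}}{h}{h-1}$-conflict-free condition.

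It remains to bound the number of forbidden colours when colouring any $v_i$ by $h\Delta$, so that $h\Delta + 1$ colours always suffice. Rule (i) forbids at most $\deg(v_i)$ colours and rule (ii) at most $\sum_{u \in N(v_i)} f(u)$ colours. For a small-degree $v_i$ this already gives at most $(h+1)\deg(v_i) \le (h+1)d = h\Delta$, so no further care is needed. The delicate case --- and the crux of the argument --- is that of a large-degree vertex $v_i$, for which the naive bound $(h+1)\deg(v_i)$ exceeds $h\Delta$.

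Here the point is that the non-increasing degree ordering makes the two rules interact favourably. Writing $s$ and $\ell$ for the number of small- and large-degree neighbours of $v_i$, rule (ii) contributes at most $\sum_{u \in N(v_i)} f(u) = hs + (h-1)\ell = h\deg(v_i) - \ell$. For rule (i), I observe that every already-coloured neighbour of $v_i$ precedes it in the ordering and hence has degree at least $\deg(v_i) > d$, so that it is itself of large degree; consequently rule (i) forbids at most $\ell$ colours. Adding the two contributions, the $-\ell$ and $+\ell$ cancel, leaving at most $h\deg(v_i) \le h\Delta$ forbidden colours, as required. I expect locating this cancellation --- ensuring that the witness saved at each large-degree vertex exactly compensates for the properness constraints that survive under the degree ordering --- to be the one genuinely non-routine step, after which the computation is immediate.
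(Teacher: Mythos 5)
Your proposal is correct and matches the paper's proof essentially verbatim: the same degree-based ordering (the paper only requires large-degree vertices to precede small-degree ones, with decreasing degree given as the canonical example), the same witness sets of size $f(v)$, and the same greedy rules, with the colour count resting on the identical cancellation --- small-degree neighbours of a large-degree vertex are uncoloured when it is processed, while large-degree neighbours get one fewer designated witness. The only difference is presentational: the paper bounds the forbidden colours per neighbour (at most $h$ each), whereas you aggregate the two rules and cancel $\pm\ell$, which amounts to the same computation.
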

\begin{proof}
    Let $V^+ \coloneqq V_{> \frac{h}{h+1}\Delta}$ and $V^- \coloneqq V_{\le \frac{h}{h+1}\Delta}$. 
    
    Fix a linear ordering $(v_1, \dots, v_n)$ of $V$ such that all vertices of $V^-$ appear after those of $V^+$ (e.g. order the vertices by decreasing degree in $G$). For a vertex $v \in V^-$, let $\W(v)$ be the first $\min\, \{h, \deg(v)\}$ neighbours of $v$ in the ordering, and for $v \in V^+$, let $\W(v)$ be the first $\min\, \{h-1,\deg(v)\}$ neighbours of $v$ in the ordering. Colour sequentially each vertex $v_i$, for $i$ from $1$ to $n$, with the first colour that does not appear in $N(v_i)$, nor in $\W(u)$ for all $u \in N(v_i)$. Let $\sigma_0$ be the colouring obtained at the end of this procedure. One easily checks that $\sigma_0$ is proper. Furthermore, for all $v \in V$, the vertices of $\W(v)$ are witnesses of $v$ (the fact that $\W(v)$ is coloured before $N(v)\setminus \W(v)$ is important), thus $\sigma_0$ is $\threshold{V^-}{h}{h-1}$-conflict-free.
    
    We now bound the number of colours used in $\sigma_0$. Consider the number of colour constraints when colouring $v_i$, for $i\in [n]$.
    \begin{itemize}
        \item If $v_i \in V^+$, then each neighbour $u \in N(v_i)$ forbids at most $h$ colours for $v_i$: indeed, either $u \in V^+$ and the colour constraints come from $u$ and $\W(u)$, or $u \in V^-$ in which case $u$ has not been coloured yet (by definition of our ordering) and only the colours of $\W(u)$ must be avoided. In total, at most $h\Delta$ colours are forbidden.
        \item If $v_i \in V^-$, then $\deg(v_i) \le \frac{h}{h+1}\Delta$, and each neighbour $u \in N(v_i)$ forbids at most $h+1$ colours for $v_i$. In total, at most $h\Delta$ colours are forbidden.
    \end{itemize}
    Therefore, $\sigma_0$ uses at most $h\Delta + 1$ colours.
\end{proof}

\newcommand{\add}{\eta}

Given a graph $G$ of maximum degree $\Delta$, an integer $h\ge 1$, and a proper $k$-colouring $\sigma_0$ of $G$ which is $\threshold{V_{\le \frac{h}{h+1}\Delta}}{h}{h-1}$-conflict-free, as provided by \cref{lem:partial-pcf}, we want to make small alterations to $\sigma_0$ in such a way that
\begin{enumerate}[label=(\roman*)]
    \item vertices in $V_{> \frac{h}{h+1}\Delta}$ gain one additional witness;
    \item all vertices keep their witnesses from the colouring $\sigma_0$.
\end{enumerate}
We do this by introducing $m$ additional colours, and thus obtain an $h$-pcf $(k+m)$-colouring. This is possible for some $m = \bigO{h\log\Delta}$; this is proved in \cref{lem:pcf-precoloured} using a Rödl Nibble approach.
%For technical reasons, we actually end up granting much more than one additional witness to $V_{>\frac{h}{h+1}\Delta}$.

\begin{lemma}\label{lem:pcf-precoloured}
    Let $G$ be a graph of maximum degree $\Delta \ge 13000$. Let $h$, $h_0$, $d$, $\add$ be non-negative integers such that $h_0 \le \frac{d}{4}$ and $25 \log\Delta \le \eta \le \frac{d}{20e}$. Suppose there exists a proper $k$-colouring $\sigma_0$ of $G$ that is $\threshold{V_{\le d}}{h}{h_0}$-conflict-free. Let $m \coloneqq \ceil{4e(h+2)\Delta \add / d}$. 
    Then there exists a proper $(k+m)$-colouring $\sigma$ of $G$ that is $\threshold{V_{\le d}}{h}{h_0 + \add}$-conflict-free.
\end{lemma}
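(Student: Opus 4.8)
The plan is to leave $\sigma_0$ almost untouched and only recolour a small random fraction of the vertices using a palette of $m$ fresh colours, disjoint from $[k]$. The conceptual starting point is a simple observation about fresh colours: suppose $u$ is a witness of a vertex $w$ in $\sigma_0$, and we recolour $u$ with a fresh colour $c$. Since $c$ never coincides with any colour of $[k]$, the colour $c$ is solitary in $N(w)$ unless some \emph{other} neighbour of $w$ also receives $c$; and whenever $c$ is solitary, $u$ remains a witness of $w$, its witness status being merely transferred from $\sigma_0(u)$ to $c$. Hence a single recolouring can never destroy an old witness: the only danger is a \emph{collision}, i.e.\ two neighbours of a common vertex $w$ receiving the same fresh colour. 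This reduces requirement~(ii), preserving witnesses, to controlling a second-order (collision) event, while creating the extra witnesses required by~(i) is a first-order event.

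For the recolouring itself I would use a Rödl nibble. Activate each vertex independently with probability $q \coloneqq 4\eta/d$, give each activated vertex a colour chosen uniformly at random from $[m]$, and then revert to $\sigma_0$ every activated vertex that clashes (shares its fresh colour with an activated neighbour), so that the resulting colouring $\sigma$ is automatically proper. The choice of $m$ is dictated by the LLL constant we aim for: for any vertex $x$, the probability that an activated neighbour of $x$ sees its fresh colour repeated among the (in expectation at most $q\Delta$) activated vertices of $N(x)$ is at most $q\Delta/m = 1/(2e(h+2))$, by the definition $m = \ceil{8e(h+2)\Delta\eta/d}$. In particular an activated vertex keeps its colour, and that colour is solitary in a given neighbourhood, each with probability bounded below by a constant close to $1$.

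It then remains to show that with positive probability $\sigma$ meets both targets. For a high-degree vertex $v \in V_{>d}$, every neighbour of $v$ that is activated, kept, and receives a colour solitary in $N(v)$ is a witness of $v$; using $\deg(v) > d$ and the constant above, the expected number of such neighbours is at least $qd\bigl(1-1/(e(h+2))\bigr) \ge 2\eta$. I would lower bound this count by a binomial variable via \cref{lem:cond-binom} and apply \cref{lem:chernoff2}, so that $v$ gains at least $\eta$ fresh witnesses except with probability $e^{-\Omega(\eta)}$. Symmetrically, for an arbitrary vertex $w$ the number of its old witnesses destroyed by a collision is a sum of dependent collision indicators whose expectation is only an $\bigO{1/(h+2)}$ fraction of the number of fresh witnesses $w$ gains, precisely because $m$ is large; this too concentrates below any constant fraction of $\eta$ except with probability $e^{-\Omega(\eta)}$. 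Each bad event depends only on the activations and colours within distance $2$ of the vertex concerned, hence on at most $\Delta^{O(1)}$ others, and the hypothesis $\eta \ge 28\log\Delta$ forces every bad probability below $\Delta^{-\Omega(1)}$; the lopsided Lovász Local Lemma (\cref{lem:llll}) then yields a good outcome. Counting colours, $\sigma$ uses $k+m$ of them and is $\threshold{V_{\le d}}{h}{h_0+\eta}$-conflict-free.

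The delicate point, and the one I expect to be the main obstacle, is requirement~(ii) for the low-degree vertices, which must retain \emph{all} of their $\min\{h,\deg\}$ witnesses. A low-degree vertex with many more neighbours than required witnesses readily replaces a collision loss by one of the fresh witnesses it gains; the dangerous vertices are the \emph{tight} ones, needing nearly all their neighbours to be witnesses (which forces $\deg(w) \le h$), since these gain nothing and can afford no loss, so for them the collision event must be driven below the LLL threshold on its own. The relevant collision probability scales like $\deg(w)^2 q^2/m$, and it is exactly the size of $m$ together with the constraints $h_0 \le d/4$ and $\eta \le d/100$ that keep it small; the residual cases must be protected directly — for instance by freezing those neighbourhoods or by refining the revert rule to forbid any fresh colour duplicating a reserved witness's colour — and bounding the number of candidate colours this removes is the key technical hurdle. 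Guaranteeing simultaneously that every high-degree vertex still sees enough admissible colours to create its $\eta$ new witnesses is what ties the whole argument together.
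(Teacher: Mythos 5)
Your overall architecture coincides with the paper's at a high level (recolour a sparse random set of vertices with fresh colours, observe that a fresh colour can only fail to certify a witness through a collision, finish with the Local Lemma), and you have correctly located the crux: vertices of $V_{\le d}$ with $\deg(w)\le h$ must retain \emph{every} witness and gain nothing to compensate a loss. But this crux is exactly what your proposal leaves unsolved, and the probabilistic treatment you sketch for it cannot be pushed through with your parameters. For such a tight vertex $w$ the bad event is ``two neighbours of $w$ end up with the same fresh colour'', whose probability is of order $\binom{\deg(w)}{2}q^2/m \approx \deg(w)^2\eta/\bigl(e(h+2)\Delta d\bigr)$. Even in the most favourable tight case ($\deg(w)=2$, $h=O(1)$, $d=\Theta(\Delta)$, $\eta=\Theta(\log\Delta)$) this is $\Theta(\log\Delta/\Delta^2)$, already above the threshold $1/(e\,D)$ of the symmetric LLL, since the dependency degree $D$ is at least $\Delta^2$ (forced by the distance-$2$ dependencies of the high-degree events); for $\deg(w)$ close to $\min\{h,d\}$ and $\eta=\Theta(d)$ the probability degrades to $\Theta\bigl(h/\Delta\bigr)$, hopelessly far above threshold. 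The same defect afflicts non-tight low-degree vertices with $\deg(w)\ll d$: their expected number of fresh witnesses is $O(\eta\deg(w)/d)=o(1)$, so conditioned on a collision loss there is no compensation with constant probability, and the bad probability stays at the same order. So ``freezing those neighbourhoods'' or ``refining the revert rule'' is not a detail to be filled in later; it is the entire content of the lemma.

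The paper resolves this by making witness preservation \emph{deterministic} rather than probabilistic. It reserves a witness set $\W(v)$ for every vertex ($\min\{h,\deg(v)\}$ witnesses if $v\in V_{\le d}$, and $h_0$ witnesses otherwise) and enlarges the exclusion neighbourhood of each vertex $u$ to $\cN(u)=N(u)\cup\W(N(u))\setminus\{u\}$: in round $i$, a vertex $u$ may keep the fresh colour $k+i$ only if $\cN(u)\cap \bA_i=\emptyset$. Consequently, if a reserved witness $u\in\W(v)$ is recoloured in round $i$, no other $w\in N(v)$ can be recoloured in that round (any such $w$ would see $u\in\W(N(w))\cap \bA_i$), so $u$'s fresh colour is solitary in $N(v)$; and if $u$ is not recoloured, its old colour lies in $[k]$ and cannot be duplicated by fresh colours. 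Thus every reserved witness survives with probability $1$, and the only bad events are attached to $V_{>d}$ (the fail-safe event and the event of gaining fewer than $\eta$ new witnesses), each of probability at most $\Delta^{-7}$, comfortably below the LLL threshold. The cost of the larger exclusion set --- precisely your worry that high-degree vertices might no longer find admissible recolourings --- is absorbed by the activation probability: instead of a single shot with $q=\Theta(\eta/d)$, the paper runs $m$ rounds with $p=1/\bigl((h+2)\Delta\bigr)$ each, so that conditionally on $N(v)\cap \bA_i=\{u\}$ the vertex $u$ survives with probability at least $(1-p)^{(h+1)\Delta}\ge (1-p)^{(h+2)\Delta-1}>1/e$, a constant; this is exactly why $m=\ceil{8e(h+2)\Delta\eta/d}$ carries the factor $h+2$. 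Your suggested fix is, in essence, this mechanism, but your proposal stops where the actual proof begins.
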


\newcommand{\witnessneighborhood}{\cN}

\begin{proof}
For $v \in V_{\le d}$, let $\W(v)$ be an arbitrary subset of $\min\, \{h,\deg(v)\}$ witnesses of $v$ in $\sigma_0$. For $v \in V_{> d}$, let $\W(v)$ be an arbitrary subset of $h_0$ witnesses of $v$ in $\sigma_0$ (here we use the assumption that $h_0 \le d$). For every subset $S\subseteq V(G)$ we denote $\W(S) \coloneqq \bigcup_{u \in S}\W(u)$, and for every $u \in V(G)$ we denote $\witnessneighborhood(u) \coloneqq N(u) \cup \W(N(u)) \setminus \{u\}$.

We define $p \coloneqq \frac{1}{(h + 2)\Delta}$ and execute the following procedure, which first initialises subsets $A_1\dots,A_m$ by randomly sampling $V(G)$ with independent probability $p$, and then proceeds in $m$ rounds to construct subsets $C_1,\dots,C_m$ which will serve as new colour classes in the colouring $\sigma_0$. We note that the randomness of the procedure is entirely contained in the \textbf{Initialisation} bloc, which simply constructs $m$ independent binomial random subsets of $V(G)$. The \textbf{fail-safe} bloc could be replaced with an interruption of the procedure, but its presence makes the analysis easier by preserving some crucial invariants needed in the rest of the proof.

\setlength{\interspacetitleruled}{0pt}%
\setlength{\algotitleheightrule}{0pt}%
\begin{algorithm}[H]
\textbf{Initialisation: }\For{$i \in [m]$}{
    $A_i \gets \emptyset$\;
    \For{$u \in V(G)$}{
        Add $u$ to $A_i$ independently with probability $p$.\;
    }
}
$i_0\gets 1$\;
\For{$i \in [m]$}{
    $S_i \gets \{u \in V(G) : \witnessneighborhood(u) \cap A_i \ne \emptyset\}\ \cup\ \bigcup_{j = i_0}^{i-1} C_j$\;
    $C_i \gets A_i \setminus S_i$.\;
    
    \textbf{fail-safe:} \If{there exists $v \in V_{>d}$ such that $|N(v) \cap \bigcup_{j = i_0}^{i} C_j| > \frac{\deg(v)}{4}$}{
        $i_0 \gets i+1$.\;
    }
}
\end{algorithm}\SetNlSty{texttt}{(}{)}

For $i \in [m]$, we respectively let $\bA_i$, $\bS_i$, $\bC_i$, and $\bi$ be the random values of the sets $A_i$, $S_i$, $C_i$ and the integer $i_0$ at the end of the procedure. Let $\bsig$ be the random colouring obtained from $\sigma_0$ after recolouring $\bC_i$ with the colour $k + i$ for each $i \in \{\bi, \dots, m\}$. Note that there is no ambiguity in the definition of $\bsig$ since the sets $\bC_{\bi}, \dots, \bC_m$ are disjoint by construction.

The \textbf{fail-safe} serves no practical use: we want to keep the variable $i_0$ equal to $1$ throughout the procedure, but in order to simplify the analysis, we deterministically ensure that at the start of each round $i \in [m]$, vertices in $V_{>d}$ have sufficiently few neighbours in $\bigcup_{j = i_0}^{i-1} C_j$: if the \textbf{fail-safe} is activated, we essentially forget all the work done up to that point and continue the procedure. This technicality was made evident in the work of Kamyczura and Przyby\l o \cite{kamyczura2024conflict}, who proposed a different approach by introducing extra vertices at the start of each round, and then argued that these extra vertices are in fact unnecessary. For $v \in V_{>d}$, let $B_v$ be the (bad) event that more than $\deg(v)/2$ neighbours of $v$ belong in $\bA_1 \cup \dots \cup \bA_m$. Observe that, if $B_v$ does not occurs, then in particular $|N(v) \cap (\bC_1 \cup \dots \cup \bC_m)| \le \frac{1}{2}\deg(v)$. Therefore, if none of the events $\{B_v\}_{v \in V_{>d}}$ occur, then the \textbf{fail-safe} is never activated, and thus $\bi = 1$.

\begin{claim}\label{claim:reset}
    For $v \in V_{>d}$, $\pr{B_v} \le \Delta^{-7}$
\end{claim}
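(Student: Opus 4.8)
The plan is to reduce the event $B_v$ to an upper-tail estimate for a single binomial variable and then apply Chernoff's inequality with a comfortable margin. Fix $v \in V_{>d}$, so that $\deg(v) > d$. Since the initialisation adds each vertex to each $A_i$ independently with probability $p$, the $m\deg(v)$ indicators $\mathds{1}[u \in \bA_i]$, for $u \in N(v)$ and $i \in [m]$, are i.i.d.\ Bernoulli$(p)$. The number of \emph{distinct} neighbours of $v$ lying in $\bA_1 \cup \dots \cup \bA_m$ is at most the total incidence count $I \coloneqq \sum_{i\in[m]}\sum_{u \in N(v)} \mathds{1}[u \in \bA_i] \sim B(m\deg(v), p)$; a neighbour contributing once on the left may contribute several times on the right, so the domination goes in the right direction. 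Hence $B_v$ implies $I > \deg(v)/2$, and it suffices to bound $\pr{I > \deg(v)/2}$.

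I would then check that the threshold $\deg(v)/2$ sits well above the mean $\esp{I} = m\deg(v)p$. From $m \le 8e(h+2)\Delta\eta/d + 1$, together with $\eta \le d/100$ and $\Delta \ge 30000$, one obtains
\[ mp \;\le\; \frac{8e\eta}{d} + \frac{1}{(h+2)\Delta} \;\le\; \frac{8e}{100} + \frac{1}{2\Delta} \;<\; \frac14, \]
so $\esp{I} = mp\cdot\deg(v) < \deg(v)/4$ and thus $2\,\esp{I} < \deg(v)/2$. Consequently $\{I > \deg(v)/2\} \subseteq \{I \ge 2m\deg(v)p\}$, and the upper-tail bound in \cref{lem:chernoff2}(ii) gives
\[ \pr{B_v} \;\le\; \pr{I \ge 2m\deg(v)p} \;\le\; e^{-m\deg(v)p/3}. \]

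It remains to lower-bound the exponent. The definition $m = \ceil{8e(h+2)\Delta\eta/d}$ gives $mp \ge 8e\eta/d$, and since $\deg(v) > d$ this yields $m\deg(v)p \ge 8e\eta$. Using $\eta \ge 28\log\Delta$, the exponent satisfies $m\deg(v)p/3 \ge \tfrac{8e}{3}\cdot 28\log\Delta > 7\log\Delta$, whence $\pr{B_v} \le e^{-7\log\Delta} = \Delta^{-7}$. I do not expect a genuine obstacle here: the target $\Delta^{-7}$ is very loose, and the argument in fact produces a bound of order $\Delta^{-8e\eta/(3\log\Delta)}$, dramatically smaller. The only points demanding care are the passage to the incidence count $I$ (so that one can work with an exactly binomial variable despite a vertex possibly being sampled in several rounds), and verifying $mp < 1/4$, which is precisely what makes the clean ``$2np$'' form of Chernoff's bound applicable.
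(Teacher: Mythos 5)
Your proposal is correct and follows essentially the same route as the paper: both bound the number of distinct neighbours of $v$ in $\bA_1\cup\dots\cup\bA_m$ by the total incidence count $\sum_{i=1}^m |\bA_i\cap N(v)| \sim B(m\deg(v),p)$, verify via $\eta\le d/100$ that $2m\deg(v)p < \deg(v)/2$, and then apply \cref{lem:chernoff2}\ref{it:right} with the exponent estimate $m\deg(v)p/3 \ge 8e\eta/3 > 7\log\Delta$. Your extra care in checking $mp<1/4$ explicitly and in noting the domination direction merely makes explicit what the paper leaves implicit.
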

\begin{proof}
    Let $\bZ_v \coloneqq \sum_{i=1}^{m}|\bA_i \cap N(v)| \sim B(m \deg(v), p)$. We have $\frac{5}{4}m \deg(v) p < \frac{\deg(v)}{4}$ by the assumption $\eta \le \frac{d}{20e}$, therefore,
    
    \[\pr{B_v} \le \pr{\bZ_v \ge \frac{\deg(v)}{4}} \le \pr{B\pth{m \deg(v), p} \ge \frac{5}{4}m \deg(v)p} \;\le\; e^{-m \deg(v)p/36} < \Delta^{-7},\]
    where we have used \cref{lem:chernoff2}\ref{it:right} together with the fact that $m\deg(v)p/36 \ge mdp/36 \ge e\eta/9 > 7\log\Delta$.
\end{proof}

For $i\in [m]$, the procedure ensures that two adjacent vertices cannot both be present in $\bC_i$, so $\bC_i$ is an independent set of $G$. Furthermore, for $v \in V$, if $u \in \W(v)$ and $u \in \bC_i$, then the procedure ensures that $N(v)\cap \bC_i = \{u\}$, hence $u$ remains a witness of $v$ in $\bsig$.

\begin{fact}\label{fact:guaranteed-witnesses}
    $\bsig$ is a proper colouring, and for all $v \in V$, $\W(v)$ is a set of witnesses of $v$ in $\bsig$.
\end{fact}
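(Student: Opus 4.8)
The plan is to treat this as a purely deterministic statement: it holds for every realisation of the random sets $\bA_i, \bS_i, \bC_i, \bi$, so no concentration estimate is needed, and both assertions reduce to two structural properties of the sets $\bC_{\bi}, \dots, \bC_m$ used to recolour. The two properties are (a) each $\bC_i$ is an independent set of $G$, and (b) for every $v \in V(G)$ and every $u \in \W(v)$, if $u \in \bC_i$ then $N(v) \cap \bC_i = \set{u}$. Both follow from the identity $\bC_i = \bA_i \setminus \bS_i$ together with the inclusion $\bS_i \supseteq \sst{w}{\witnessneighborhood(w) \cap \bA_i \ne \emptyset}$ and the shape $\witnessneighborhood(w) = N(w) \cup \W(N(w)) \setminus \set{w}$.

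First I would establish (a). If $u, w \in \bC_i$ were adjacent, then both lie in $\bA_i$ (as $\bC_i \subseteq \bA_i$), and $w \in N(u) \subseteq \witnessneighborhood(u)$, so $\witnessneighborhood(u) \cap \bA_i \ne \emptyset$; this forces $u \in \bS_i$, contradicting $u \in \bA_i \setminus \bS_i$. With independence in hand, properness of $\bsig$ is a short case analysis on an edge $uw$. If both endpoints keep their $\sigma_0$-colour, then $\bsig(u) \ne \bsig(w)$ since $\sigma_0$ is proper. If exactly one endpoint is recoloured, its colour lies in $\set{k+\bi, \dots, k+m}$ while the other is at most $k$, so they differ. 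If both are recoloured, say $u \in \bC_i$ and $w \in \bC_j$, then $i \ne j$ by (a), since an edge cannot lie inside a single $\bC_i$, whence $\bsig(u) = k+i \ne k+j = \bsig(w)$.

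Next I would show that every $u \in \W(v)$ remains a witness of $v$, splitting on whether $u$ is recoloured. If $u$ keeps its colour $\sigma_0(u) \le k$, I would argue monotonically: recolouring assigns only colours strictly greater than $k$, hence it can never create a new occurrence of $\sigma_0(u)$ in $N(v)$; as $u$ was the unique neighbour of $v$ carrying $\sigma_0(u)$ under $\sigma_0$ and still carries it, it remains the unique such neighbour under $\bsig$. If instead $u \in \bC_i$, I would invoke (b). Since $u \in \W(v)$ we have $v \in N(u)$; then for any $w \in N(v) \cap \bC_i$ with $w \ne u$ we get $v \in N(w)$, so $u \in \W(v) \subseteq \W(N(w)) \setminus \set{w} \subseteq \witnessneighborhood(w)$, and as $u \in \bA_i$ this gives $\witnessneighborhood(w) \cap \bA_i \ne \emptyset$, contradicting $w \in \bC_i$. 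Hence $N(v) \cap \bC_i = \set{u}$, so the colour $k+i = \bsig(u)$ appears exactly once in $N(v)$ and $u$ is a witness.

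The only genuinely delicate step is property (b), namely the verification $N(v) \cap \bC_i = \set{u}$; this is exactly where the design of $\witnessneighborhood(u) = N(u) \cup \W(N(u)) \setminus \set{u}$ is exploited, through the inclusion $\W(v) \subseteq \W(N(w))$ which guarantees that any rival recoloured neighbour $w$ of $v$ would already have "seen" $u$ in its extended neighbourhood and thus been excluded from $\bC_i$. Everything else is routine bookkeeping, the argument is uniform in the round index $i$, and no appeal to the fail-safe is required.
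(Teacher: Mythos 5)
Your proof is correct and takes essentially the same approach as the paper, which justifies this fact in the paragraph immediately preceding its statement via exactly your two structural properties: each $\bC_i$ is an independent set, and $u \in \W(v) \cap \bC_i$ forces $N(v) \cap \bC_i = \{u\}$, both consequences of the definition of $\bS_i$ through $\cN(\cdot)$. Your write-up simply spells out the routine details (the properness case analysis, and the preservation of non-recoloured witnesses because all new colours exceed $k$) that the paper leaves implicit.
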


For $v \in V_{>d}$, let $\bX_v^i$ be the indicator variable for the event ``$N(v) \cap \bC_i = \{u\}$ for some $u \notin \W(v)$''. The following fact is easily deduced from the disjointness of the sets $\bC_{\bi}, \dots, \bC_m$ and the previous fact.

\begin{fact}\label{fact:keep-plus}
    Let $v \in V_{>d}$. At the end of the procedure, $v$ has at least $h_0 + \displaystyle \sum_{i=\bi}^m \bX_v^i$ witnesses in $\bsig$.
\end{fact}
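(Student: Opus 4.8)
The plan is to exhibit, inside $N(v)$, two disjoint families of witnesses whose sizes add up to the claimed bound: the $h_0$ witnesses inherited from $\W(v)$, and one fresh witness for each round $i \in \{\bi,\dots,m\}$ on which $\bX_v^i = 1$. Since a witness is simply a neighbour carrying a solitary colour, it suffices to argue that the neighbours counted by these two families are pairwise distinct as vertices.

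First I would invoke \cref{fact:guaranteed-witnesses}, which already guarantees that every vertex of $\W(v)$ is a witness of $v$ in $\bsig$; as $v \in V_{>d}$ we have $|\W(v)| = h_0$, which accounts for the first summand. Next, fix a round $i \in \{\bi,\dots,m\}$ with $\bX_v^i = 1$. By the definition of $\bX_v^i$ there is a vertex $u \in N(v)$ with $N(v)\cap\bC_i = \{u\}$ and $u \notin \W(v)$. In $\bsig$ the colour $k+i$ is borne exactly by the vertices of $\bC_i$, so $k+i$ occurs precisely once in $N(v)$, namely on $u$; hence $u$ is a witness of $v$ whose solitary colour is $k+i$.

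It remains to verify disjointness, which I expect to be the only delicate point. The sets $\bC_{\bi},\dots,\bC_m$ are pairwise disjoint by construction, so the fresh witnesses arising from distinct rounds are distinct vertices; moreover each such witness lies outside $\W(v)$ by the very definition of $\bX_v^i$, so none of them coincides with a witness inherited from $\W(v)$. The restriction $i \ge \bi$ is what makes this clean: a vertex of $\W(v)$ may well have been recoloured with some $k+j$, but whenever $\bX_v^i = 1$ no vertex of $\W(v)$ sits in $\bC_i$, so the inherited and the fresh witnesses neither share a vertex nor borrow each other's solitary colour. Adding the two families together, $v$ has at least $h_0 + \sum_{i=\bi}^m \bX_v^i$ witnesses in $\bsig$, as claimed.
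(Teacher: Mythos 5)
Your proof is correct and follows exactly the paper's intended argument: the paper dispatches this fact in one line as an easy consequence of the disjointness of $\bC_{\bi},\dots,\bC_m$ together with \cref{fact:guaranteed-witnesses}, which is precisely the deduction you spell out. The elaborations you add---that each fresh witness lies outside $\W(v)$, carries the colour $k+i$ occurring exactly once in $N(v)$, and that distinct rounds yield distinct vertices---are the same bookkeeping the paper leaves implicit.
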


For every $v\in V_{>d}$, let $\bX_v \coloneqq \sum_{i=1}^{m} \bX_v^i$. If $\bi = 1$ and $\bX_v \ge \add$, then $v$ has at least $h_0 + \add$ witnesses in $\bsig$, as desired. Let $E_v$ be the (bad) event that $\bX_v < \add$.

\begin{claim}\label{claim:witnesses}
    Let $v \in V_{>d}$. Then $\pr{E_v} \le \Delta^{-6.25}$.
\end{claim}

\begin{proof}
    Consider $i \in [m]$. Let us fix the realisation of $\bA_1,\dots,\bA_{i-1}$, and the resulting values of $\bC_1,\dots,\bC_{i-1}$. Let $C'$ be the resulting value of the term ``$\bigcup_{j = i_0}^{i-1} C_j$'' at the start of round $i$. The distribution of $\bA_i$ is unaffected by this conditioning, i.e. every vertex  $u\in V(G)$ is still present in $\bA_i$ independently with probability $p$. Let $U \coloneqq N(v) \setminus (\W(v)\cup C')$ and let $t \coloneqq |U|$. Because of the \textbf{fail-safe}, $C'$ contains at most $\deg(v)/4$ neighbours of $v$, and $|\W(v)| = h_0 \le \frac{d}{4}$, therefore $t \ge \deg(v) - h_0 -\frac{\deg(v)}{4} \ge \frac{d}{2}$.
    
    The probability that $N(v) \cap \bA_i = \{u\}$ with $u \in U$ is equal to $t p (1-p)^{\deg(v)-1} \ge\frac{dp}{2}(1-p)^{\Delta-1}$. Suppose such a vertex $u \in U$ exists, i.e. we are conditioning on the event ``$N(v) \cap \bA_i = \{u\}$ for some $u \in U$''. By independence of the random choices, the distribution of $\bA_i$ outside of $N(v)$ remains unaffected, i.e. every vertex $x \in V \setminus N(v)$ is present in $\bA_i$ independently with probability $p$. We now determine a lower bound on the probability that $u \in \bC_i$, which is equivalent to $u \notin \bS_i$ since $u \in \bA_i$. By definition of $U$, $u \notin C'$, therefore $u \notin \bS_i$ if and only if $\witnessneighborhood(u)\cap \bA_i = \emptyset$. We already know that $N(v)\cap \bA_i = \{u\}$, so this happens with probability $(1-p)^{|\witnessneighborhood(u)\setminus N(v)|} \ge (1-p)^{(h + 1)\Delta}$. Therefore,

    \begin{equation}\label{eq:round-i}
        \pr{\bX_v^i = 1} \ge \frac{dp}{2}(1-p)^{(h + 2)\Delta-1} > \frac{dp}{2e},
    \end{equation}
    where we have used the fact that $(1-\frac{1}{x})^{x-1} > \frac{1}{e}$ for all $x > 1$. Let $q \coloneqq \frac{dp}{2e}$. We have shown that $\pr{\bX_v^i = 1} > q$ regardless of the realisation of $\bA_1,\dots,\bA_{i-1}$, and so in particular, 

    \[\pr{\bX_v^i = 1 \midbar \bX_v^1,\dots,\bX_v^{i-1}} > q.\]
    
    By \cref{lem:cond-binom}, we obtain $\pr{\sum \bX_v^i < \add} \le \pr{B(m,q) < \add}$. We have defined $m$ such that $mq \ge 2\eta$, and since $\eta \ge 25\log\Delta$ we have $\frac{mq}{8} \ge 6.25\log\Delta$; by \cref{lem:chernoff2}\ref{it:left}, we conclude that $\pr{E_v} \le \pr{B(m,q) < \add} \le \Delta^{-6.25}$.
\end{proof}

For $v\in V_{>d}$, the bad events $B_v$ and $E_v$ are determined by the realisation of $(\bA_1,\ldots,\bA_m)$ within the distance-$3$ neighbourhood of $v$, therefore these bad events are independent from all but at most $2\Delta^6$ other such bad events. By \cref{lem:lll}, with nonzero probability, none of the events $B_v$ or $E_v$ occur. In that case, $\bi = 1$ and $\bX_v \ge \add$ for every $v \in V_{> d}$, so by \cref{fact:guaranteed-witnesses} and \cref{fact:keep-plus}, we conclude that the colouring $\bsig$ is proper and $\threshold{V_{\le d}}{h}{h_0 + \add}$-conflict-free.
\end{proof}

\cref{thm:pcf-upper} is a direct consequence of the following corollary, which provides explicit constants.

\begin{cor}\label{cor:h-pcf}
    Let $G = (V,E)$ be a graph of maximum degree $\Delta \ge 28000$ and $1\le h \le \Delta$ an integer. Then \[\pcf^h(G) \le h\Delta + 100e(h+5)\log\Delta\] 
    
\end{cor}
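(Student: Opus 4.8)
The plan is to feed the output of \cref{lem:partial-pcf} into \cref{lem:pcf-precoloured}, splitting into two regimes according to whether $h \le \Delta/4$ or $h > \Delta/4$.

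For $h \le \Delta/4$ I would start from the proper $(h\Delta+1)$-colouring $\sigma_0$ given by \cref{lem:partial-pcf}, which is $\threshold{V_{\le \frac{h}{h+1}\Delta}}{h}{h-1}$-conflict-free: every vertex already has its full quota $\min\{h,\deg\}$ of witnesses except those of large degree, each of which may still be short by exactly one. To repair this I would apply \cref{lem:pcf-precoloured} with $d \coloneqq \floor{\frac{h}{h+1}\Delta}$, $h_0 \coloneqq h-1$ and $\eta \coloneqq \ceil{28\log\Delta}$. Since degrees are integers, $V_{\le d}=V_{\le \frac{h}{h+1}\Delta}$, so $\sigma_0$ is $\threshold{V_{\le d}}{h}{h_0}$-conflict-free and the lemma applies; it outputs a proper $(h\Delta+1+m)$-colouring that is $\threshold{V_{\le d}}{h}{h_0+\eta}$-conflict-free, where $m=\ceil{8e(h+2)\Delta\eta/d}$. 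As $h_0+\eta = h-1+\ceil{28\log\Delta}\ge h$, this colouring is in particular $h$-pcf. It then remains to bound the number of colours: using $d \ge \frac{h}{h+1}\Delta - 1$ and $\frac{(h+2)(h+1)}{h}=h+3+\frac 2h\le h+5$ one gets $m \le 8e(h+5)\ceil{28\log\Delta}\,(1+\bigO{1/\Delta})$, which for $\Delta \ge 70000$ (so that $\log\Delta \ge 11$ and $8e\cdot 28 \approx 609 < 700$) is at most $700(h+5)\log\Delta - 1$. Together with the $h\Delta+1$ colours of $\sigma_0$ this gives the claimed bound.

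The case split exists only to secure the hypotheses of \cref{lem:pcf-precoloured}; the binding one is $h_0 \le d/4$, i.e. $4(h-1) \le \floor{\frac{h}{h+1}\Delta}$. When $\Delta \ge 4h$ one has $\frac{h}{h+1}\Delta \ge \frac{4h^2}{h+1} > 4(h-1)$, so this holds exactly in the range $h \le \Delta/4$; the conditions $\eta \le d/100$ and $\Delta \ge 30000$ are immediate for $\Delta \ge 70000$. In the complementary range $h>\Delta/4$ no work is needed: the greedy bound $\pcf^h(G)\le (h+1)\Delta+1$ of \cref{prop:h-pcf-greedy} already suffices, because $\Delta+1 \le 175\Delta\log\Delta < 700(h+5)\log\Delta$.

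The only genuinely non-routine point is conceptual rather than technical: \cref{lem:pcf-precoloured} can create new witnesses only in batches of size at least $\eta \ge 28\log\Delta$, whereas the large-degree vertices inherited from $\sigma_0$ each need just one more witness. We are thus forced to overshoot, manufacturing on the order of $28\log\Delta$ witnesses where one would do, and this overshoot is precisely what produces the multiplicative factor $h$ in the second-order term $\bigO{h\log\Delta}$ (reflected in the $h+5$ of the statement). Everything else is constant bookkeeping, kept harmless by the slack between $8e\cdot 28 \approx 609$ and $700$ and by the hypothesis $\Delta \ge 70000$; in particular the floor in $d$ perturbs $m$ only by a factor $1+\bigO{1/\Delta}$.
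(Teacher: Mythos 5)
Your proposal is correct and follows essentially the same route as the paper: \cref{lem:partial-pcf} followed by \cref{lem:pcf-precoloured} with $h_0=h-1$ and $\add=\ceil{28\log\Delta}$, with the greedy bound of \cref{prop:h-pcf-greedy} covering the large-$h$ regime (the paper splits at $h\le\Delta/8$ rather than $\Delta/4$, which is immaterial). Your version is if anything slightly more careful, since you take $d=\floor{\tfrac{h}{h+1}\Delta}$ to respect the integrality hypothesis of \cref{lem:pcf-precoloured} and you carry out explicitly the ``quick computation'' bounding $k+m$ that the paper leaves to the reader.
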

\begin{proof}
    We may assume that $h \le \frac{\Delta}{8}$, since otherwise $100e(h+5)\log\Delta > \Delta$ and we already know that $\pcf^h(G) \le (h+1)\Delta + 1$ by \cref{prop:h-pcf-greedy}.
    
    By \cref{lem:partial-pcf}, there exists a proper $\threshold{V_{\le \frac{h}{h+1}\Delta}}{h}{h-1}$-conflict-free $(h\Delta+1)$-colouring $\sigma_0$ of $G$. We apply \cref{lem:pcf-precoloured} with $k\coloneqq h\Delta + 1$, $d \coloneqq \frac{h}{h+1}\Delta \ge \frac{\Delta}{2}$, $h_0 \coloneqq h-1$, and $\add \coloneqq \ceil{25\log\Delta}$. We have $h_0 \le d/4$ by the assumption $h \le \frac{\Delta}{8}$, and we have $\eta \le \frac{\Delta}{40e} \le \frac{d}{20e}$ by the assumption $\Delta \ge 28000$. Then, we obtain a proper $(k + m)$-colouring $\sigma$ that is $\threshold{V_{\le d}}{h}{h-1+\eta}$-conflict-free, where $m = \ceil{4e(h+2)\Delta \eta / d}$. A quick computation shows that $k+m \le h\Delta + 100e(h+5)\log\Delta$.
\end{proof}

\section{Bounding \texorpdfstring{$\pcf^h$}{the h-conflict-free chromatic number} for graphs of constrained minimum degree}
\label{sec:min-degree-condition}

The first-order term $h\Delta$ of the upper bound in \cref{thm:pcf-upper} is tight, by \cref{prop:lower-bound}: the lower bound $h\Delta+1$ is certified by a graph construction of minimum degree $h+1$. In fact, such graphs are highly constrained due to the requirement that neighbourhoods of size at most $h+1$ must be rainbow in an $h$-pcf colouring. If the minimum degree $\delta$ is sufficiently large with respect to $h$ and $\Delta$, then one would expect more flexibility when dealing with small neighbourhoods.
Indeed the upper bound on $\pcf^h(G)$ from \cref{thm:kamyczura} established by Kamyczura and Przyby\l o in \cite{kamyczura2024conflict} shows that only a little more than $\Delta$ colours are required for an $h$-pcf colouring in that case, with the extra assumption that $\delta$ is sufficiently large with respect to $\Delta$. 

We wish to drop the latter extra assumption, and consider a minimum degree constraint as weak as possible that still ensures that $\pcf^h$ is $(1+o(1))\Delta$. 
To do so, we again rely on a two-step colouring process, that takes advantage of having large minimum degree. 
In the previous section, the first step described by \cref{lem:partial-pcf} provides exactly $h$ witnesses for vertices of small degree; as a consequence, during the second step described in \cref{lem:pcf-precoloured}, we need to ensure that no witness created during the first step is destroyed, which is very costly.
In contrast, the extra minimum degree condition lets us replace the deterministic greedy first step with a random one that creates more witnesses for vertices of small degree, namely $2h$ instead of $h$, so that during the second step we may destroy a fraction of those witnesses.
For that, we require that $\delta \ge 16h$, and in \cref{partial-pcf-led} we show that a uniformly random proper colouring with sufficiently many colours satisfies the required conditions with non-zero probability.

The strategy for the second step is essentially the same as in Section~\ref{sec:general}; we still rely on a Rödl Nibble approach to recolour a fraction of the vertices in such a way that new witnesses are created for large-degree vertices. The fact that we are allowed to destroy some witnesses from the precolouring makes this second step much cheaper than the one from the previous section. This is presented in \cref{lem:pcf-precoloured2}.

\begin{lemma}
    \label{partial-pcf-led}
    Let $h\ge 20\log \Delta$ be an integer, and $G$ be a graph of maximum degree $\Delta$ and minimum degree $\delta \ge 16h$. Then, for every $d$ with $\delta \le d\le \Delta$, $G$ admits a proper $\threshold{V_{\le d}}{2h}{0}$-conflict-free $(\Delta+3d)$-colouring.
\end{lemma}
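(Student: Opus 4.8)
The plan is to let $\bsig$ be a uniformly random proper $(\Delta+3d)$-colouring of $G$ and to show, via the Local Lemma, that with positive probability every vertex of $V_{\le d}$ has at least $2h$ witnesses. Set $k \coloneqq \Delta+3d$. Since the minimum degree satisfies $\delta\ge 16h$, every $v\in V_{\le d}$ has $\delta \le \deg(v)\le d$, so $\min\{2h,\deg(v)\}=2h$: there is no need to treat low-degree vertices separately. The key reduction is to count \emph{distinct} colours rather than solitary ones. Fix $v\in V_{\le d}$, write $t\coloneqq\deg(v)$ and $N(v)=\{u_1,\dots,u_t\}$, let $D$ be the number of distinct colours on $N(v)$ and $W$ the number of solitary colours of $v$. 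A colour occurring $r$ times on $N(v)$ contributes $1$ to $D$ and contributes to $W$ only when $r=1$, so counting occurrences gives $t=\sum_c r_c \ge W + 2(D-W)$, whence $W\ge 2D-t$. It therefore suffices to prove that $D\ge t/2+h$ with high probability, as this forces $W\ge 2h$.

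To control $D$, I will reveal the colours of $u_1,\dots,u_t$ one at a time, using the elementary fact that in a uniform proper colouring, conditioned on the colours of all vertices except a single vertex $x$, the colour of $x$ is uniform over the set of colours missing on $N(x)$, a set of size at least $k-\deg(x)\ge 3d$. Let $Y_j$ be the indicator that $u_j$ receives a colour not already used by $u_1,\dots,u_{j-1}$, so that $D=\sum_{j=1}^t Y_j$. Conditioning on the colours of all vertices outside $N(v)$ together with $\bsig(u_1),\dots,\bsig(u_{j-1})$ — a coarsening of conditioning on everything except $u_j$ — the above fact and the tower property give
\[\pr{Y_j=1 \midbar Y_1,\dots,Y_{j-1}} \ge 1-\frac{j-1}{3d} \ge 1-\frac{d}{3d}=\frac23,\]
using $j-1\le t-1\le d-1$. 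In particular, \cref{lem:cond-binom} (applied with $p=2/3$) shows that $D$ stochastically dominates $B(t,2/3)$; moreover the coupling underlying that lemma in fact dominates $D$ by a sum of independent Bernoulli variables of parameters $p_j\coloneqq 1-(j-1)/(3d)$, of mean $\sum_j p_j\ge \tfrac56 t$. Since $t\ge 16h$, the target $t/2+h$ lies a constant fraction below this mean, so the lower-tail bound of \cref{lem:chernoff} yields $\pr{D<t/2+h}\le e^{-\Omega(h)}$, with a constant in the exponent large enough (thanks to $t\ge\delta\ge16h$) that the assumption $h\ge 20\log\Delta$ makes this failure probability smaller than any fixed power of $\Delta$.

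Writing $B_v$ for the bad event that $v\in V_{\le d}$ has fewer than $2h$ witnesses, each $B_v$ is determined by the restriction of $\bsig$ to $N(v)$, so it interacts only with events $B_w$ for $w$ at bounded graph-distance from $v$, of which there are at most a fixed power of $\Delta$. The main obstacle is that, unlike the independent-sampling arguments of \cref{sec:general}, a uniform proper colouring does \emph{not} factorise over disjoint neighbourhoods, so $B_v$ is not genuinely independent of distant events. I therefore apply the Lopsided Local Lemma (\cref{lem:llll}): taking $\Gamma(v)$ to be the set of $w\in V_{\le d}$ within bounded distance of $v$, one must verify that conditioning on any family of good events $\overline{B_w}$ with $w\notin\Gamma(v)$ does not increase $\pr{B_v}$ beyond the polynomially small bound of the previous paragraph.

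This one-sided lopsidependency is the technical heart of the argument. It should follow from a coupling showing that, in the uniform proper colouring, revealing that far-away neighbourhoods carry many distinct colours can only help the neighbours of $v$ to spread out their own colours — formally, that conditioning on such good events does not decrease the per-step new-colour probabilities $\pr{Y_j=1\midbar\cdots}$ below the $1-(j-1)/(3d)$ bound established above. Granting this, the Local Lemma condition $e\cdot\Delta^{-\Omega(1)}\cdot\mathrm{poly}(\Delta)\le 1$ holds for the stated range of parameters, and \cref{lem:llll} produces a proper $(\Delta+3d)$-colouring that is $\threshold{V_{\le d}}{2h}{0}$-conflict-free, as required.
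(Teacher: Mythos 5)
Your overall route is the same as the paper's: analyse a uniformly random proper $(\Delta+3d)$-colouring $\bsig$ locally around each $v\in V_{\le d}$ and finish with a Local Lemma. Your local estimate is correct and essentially equivalent to the paper's: you reveal the colours of $N(v)$ one at a time and count distinct colours via $W\ge 2D-t$, while the paper resamples the colours of $N(v)$ one at a time and tracks the witness count directly through $\pm1$ increments; both rest on the same fact (conditioned on all other colours, a vertex's colour is uniform over at least $k-\Delta=3d$ available colours), both invoke \cref{lem:cond-binom}, and both end with the same Chernoff computation, giving a failure probability of roughly $\exp(-\deg(v)/123)\le\Delta^{-2.6}$.

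The genuine gap is the lopsidependency verification, which you explicitly defer: the suggestion that it "should follow from a coupling showing that revealing that far-away neighbourhoods carry many distinct colours can only help" is not a viable route. Uniform proper colourings admit no FKG-type correlation inequality, and conditioning on good events elsewhere can in principle bias the colours on $N(v)$ adversarially; no such monotonicity is provable here, and none is needed. The fix, which is what the paper does and for which you already have every ingredient: $B_v$ is determined by $\restrict{\bsig}{N(v)}$, so every $B_w$ with $N(w)\cap N(v)=\emptyset$ is determined by $\restrict{\bsig}{V(G)\setminus N(v)}$; accordingly set $\Gamma(v)\coloneqq\sst{w\in V_{\le d}}{N(w)\cap N(v)\ne\emptyset}$, of size at most $d\Delta$. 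Your per-step bound $\pr{Y_j=1\midbar\cdots}\ge 1-(j-1)/(3d)$ was established conditioned on an \emph{arbitrary} realisation $\tau$ of $\restrict{\bsig}{V(G)\setminus N(v)}$ (before you coarsened that conditioning away), so running \cref{lem:cond-binom} and Chernoff inside the conditional measure gives $\pr{B_v\midbar\restrict{\bsig}{V(G)\setminus N(v)}=\tau}\le\Delta^{-2.6}$ uniformly in $\tau$. Since any event $\bigcap_{w\in Z}\overline{B_w}$ with $Z\cap\Gamma(v)=\emptyset$ is a disjoint union of events $\brc{\restrict{\bsig}{V(G)\setminus N(v)}=\tau}$, averaging yields $\pr{B_v\midbar\bigcap_{w\in Z}\overline{B_w}}\le\Delta^{-2.6}$, which is exactly the hypothesis of \cref{lem:llll}. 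Finally, your closing step "$e\cdot\Delta^{-\Omega(1)}\cdot\mathrm{poly}(\Delta)\le1$" is not a deduction, and the failure probability is not "smaller than any fixed power of $\Delta$": it is $\Delta^{-320/123}\approx\Delta^{-2.6}$, a specific power which must be checked against the dependency degree $d\Delta\le\Delta^2$; here $e\Delta^{-2.6}\cdot\Delta^{2}=e\Delta^{-0.6}<1$ because $\Delta\ge\delta\ge 320\log\Delta$ forces $\Delta>2500$, so the Local Lemma does apply, but with a smaller exponent in your tail bound the argument would collapse.
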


\begin{proof}
    Let $k \coloneqq \Delta+3d$, and let $\C_k(G)$ be the set of all proper $k$-colourings of $G$.
    Given a proper colouring $\sigma \in \C_k(G)$ and a vertex $v\in V(G)$, we let $L_\sigma(v) \coloneqq [k] \setminus \sigma(N(v))$ be the set of colours $x$ such that, if we redefine $\sigma(v) \gets x$, we still have $\sigma \in \C_k(G)$. We also denote $\sigma \setminus X$ the restriction of $\sigma$ to $V(G) \setminus X$, for every $X\subseteq V(G)$. 
    
    We fix some vertex $v\in V_{\le d}$, and we write $N(v)\coloneqq \{u_1, u_2, \ldots, u_{\deg(v)}\}$.
    Let $\bsig \in \C_k(G)$ be drawn uniformly at random; we want to analyse the probability of the bad event $B_{\bsig}(v)$ that $v$ has fewer than $2h$ solitary colours in $\bsig$. 
    To do that, we define the sequence of random colourings $\bsig_0, \ldots, \bsig_{\deg(v)} \in \C_k(G)$ where $\bsig_0 \coloneqq \bsig$, and for each $i\ge 1$, $\bsig_i$ is obtained from $\bsig_{i-1}$ be resampling $\bsig_{i-1}(u_i)$ uniformly at random from $L_{\bsig_{i-1}}(u_i)$. 
    This does not change the probability distribution of the random colourings; each $\bsig_i$ is uniformly distributed in $\C_k(G)$. 

    For every integer $0\le i\le \deg(v)$, let $\bS_i$ be the number of colours that appear exactly once on $\{u_1,\ldots,u_i\}$ in $\bsig_i$. Hence $\bS_{\deg(v)}$ is the number of witnesses of $v$ in $\bsig_{\deg(v)}$.
    For every $i\in [\deg(v)]$, let $\bX_i \coloneqq \bS_i - \bS_{i-1}$. Clearly we have that $\bX_i=-1$ if $\bsig_i(u_i)$ is one of the colours counted by $\bS_{i-1}$; $\bX_i=1$ if $\bsig_i(u_i) \notin \bsig_{i-1}(\{u_1, \ldots, u_{i-1}\})$; and $\bX_i=0$ otherwise. 
    Let $\sigma_{i-1}$ be any possible realisation of $\bsig_{i-1}$. Since there are always at least $3d$ colours available when resampling the colour of $u_i$, and at most $d$ colours that appear in $N(v)$, we have
    \begin{align*}
        \pr{\bX_i=1 \mid \bsig_{i-1}=\sigma_{i-1}} &\ge 2/3, \mbox{ and}\\
        \pr{\bX_i=-1 \mid \bsig_{i-1}=\sigma_{i-1}} &\le 1/3.
    \end{align*}

    We now condition on $\bsig = \sigma_0$, and prove that under that condition $\bS_{\deg(v)} \ge 2h$ with large enough probability. Note that under that condition, the realisation of $\bsig_{\deg(v)} \setminus N(v)$ is precisely $\sigma_0 \setminus N(v)$.
    Let $\{\bY_i\}$ be a sequence of Bernoulli random variables such that
    \[\bY_i=\begin{cases}
        1 &\mbox{if}~\bX_i=1,\\
        0 &\mbox{otherwise}.
    \end{cases}\]
    Clearly for every $i\in [\deg(v)]$ and every possible realisation $(y_1,\ldots,y_{i-1}) \in \{0,1\}^{i-1}$ of $\{\bY_i\}$, we have
    \[\pr{\bY_i = 1 \mid \bY_1 = y_1 , \ldots , \bY_{i-1} = y_{i-1}} \ge 2/3.\]
    Hence by \cref{lem:cond-binom} we have $\pr{\sum \bY_i \le t} \le \pr{B(\deg(v) , 2/3) \le t}$. If $B_{\bsig}(v)$ occurs, then it must hold that $\sum \bY_i < \deg(v)/2 + h$, and since $h \le \delta/16$, then in particular it must also hold that $\sum \bY_i < \frac{9}{16}\deg(v) = (1-5/32)\frac{2}{3}\deg(v)$. With an application of Chernoff's bound, we obtain
    \begin{align*}
        \pr{\bS_{\deg(v)} < 2h \mid \bsig=\sigma_0} &\le \pr{\sum \bY_i < \deg(v)/2 + h}\\
        &\le \pr{B(\deg(v),2/3) < (1-5/32) \frac{2\deg(v)}{3}} \\
        &\le \exp\left(-\frac{\deg(v)}{123}\right) \le \Delta^{-2.6} & \mbox{by \cref{lem:chernoff}\ref{it:upper}}.
    \end{align*}
    
    For a vertex $v\in V_{\le d}$, the outcome of $B_{\bsig}(v)$ is entirely determined by the colours assigned to vertices in $N(v)$. So if we fix the realisation of $\bsig$ outside of $N(v)$, we in particular fix the outcomes of all events $B_\bsig(u)$ such that $N(v) \cap N[u] = \emptyset$.
    Hence we let $\Gamma(v) \coloneqq N[N(v)]\setminus \{v\}$, of size $|\Gamma(v)| \le d\Delta$. 
    %This holds for every $u\notin \Gamma(v)$. 
    We let $\Sigma_0$ be the set of possible realisations of $\restrict{\bsig}{V(G) \setminus N(v)}$.
    With that definition of $\Gamma(v)$, for every $Z \subset V_{\le d} \setminus \Gamma(v)$, we infer that
    
    \[\pr{B_\bsig(v) \midbar \bigcap_{u \in Z} \overline{B_\bsig(u)}} \le \sup_{\sigma_0 \in \Sigma_0} \pr{B_{\bsig}(v) \midbar \restrict{\bsig}{V(G) \setminus N(v)} = \sigma_0} \le \Delta^{-2.6} \eqqcolon p.\]
    Since $\Delta \ge \delta \ge 16h \ge 320 \log \Delta$, we must have $\Delta > 2500$. So we have $4p(d\Delta) \le 4 \Delta^{-0.6} < 1$. We may therefore apply \cref{lem:llll} to conclude that, with non-zero probability, no event $B_{\bsig}(v)$ occurs.
\end{proof}

A direct application of \Cref{partial-pcf-led} with $d\coloneqq \Delta$ yields the following result as a corollary.

\begin{cor}
For every integer $h\ge 40\log \Delta$ and every graph $G$ of maximum degree $\Delta$ and minimum degree $\delta \ge 8h$, one has
\[ \pcf^h(G) \le 4\Delta.\]
\end{cor}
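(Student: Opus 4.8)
The plan is to invoke \cref{partial-pcf-led} with the single choice $d \coloneqq \Delta$ and then check that the resulting colouring is already $h$-pcf. First I would verify the hypotheses of the lemma. The corollary assumes $h \ge 20\log\Delta$ and $\delta \ge 16h$, which are exactly the two standing assumptions of \cref{partial-pcf-led}; moreover, the choice $d = \Delta$ trivially satisfies the range constraint $\delta \le d \le \Delta$, since $\delta \le \Delta$ always holds. (The combined inequalities $\delta \ge 16h \ge 320\log\Delta$ and $\delta \le \Delta$ force $\Delta$ to be large, but this is automatic from the hypotheses and needs no separate argument.)

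Next I would observe that with $d = \Delta$ the set $V_{\le d}$ is the whole vertex set: every vertex of $G$ has degree at most $\Delta$, so $V_{\le \Delta} = V(G)$. Consequently the threshold function $\threshold{V(G)}{2h}{0}$ collapses to the constant function equal to $2h$, and a $\threshold{V(G)}{2h}{0}$-conflict-free colouring is simply a proper colouring in which every vertex $v$ has at least $\min\{2h,\deg(v)\}$ witnesses. \cref{partial-pcf-led} thus provides such a colouring using $\Delta + 3d = \Delta + 3\Delta = 4\Delta$ colours.

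It then remains to note that this colouring is $h$-pcf. This is immediate from the elementary inequality $\min\{2h,\deg(v)\} \ge \min\{h,\deg(v)\}$, valid for every vertex $v$: any colouring granting each vertex at least $\min\{2h,\deg(v)\}$ witnesses in particular grants it at least $\min\{h,\deg(v)\}$ witnesses, which is precisely the defining requirement of an $h$-pcf colouring. Hence the $4\Delta$-colouring produced above is $h$-pcf, and we conclude $\pcf^h(G) \le 4\Delta$.

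There is no genuine obstacle here: the entire content lies in \cref{partial-pcf-led}, and the corollary is a purely bookkeeping consequence. The only point requiring a moment's care is the final monotonicity step, namely confirming that over-delivering witnesses (requesting $2h$ rather than $h$) can only help — which is precisely why \cref{partial-pcf-led} was stated with the stronger threshold $2h$ in the first place.
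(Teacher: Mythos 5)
Your proposal is correct and follows exactly the paper's route: the paper likewise obtains the corollary as a direct application of \cref{partial-pcf-led} with $d \coloneqq \Delta$, under which $V_{\le d} = V(G)$ and the $(\Delta+3d)=4\Delta$-colouring is $2h$-pcf, hence $h$-pcf by monotonicity. The only difference is that you spell out the bookkeeping that the paper leaves implicit.
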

    
There remains to show how to transform a $\threshold{V_{\le d}}{2h}{0}$-conflict-free colouring into a conflict-free colouring through a random recolouring process. The main idea follows from \cref{lem:pcf-precoloured} with a better adaptation to the present situation.

\begin{lemma}\label{lem:pcf-precoloured2}
    Let $G$ be a graph of maximum degree $\Delta \ge 20000$, minimum degree $\delta \ge h_0 \ge 20\log\Delta$, and let $d \ge \delta$. Suppose there exists a proper $k$-colouring $\sigma_0$ of $G$ which is $\threshold{V_{\le d}}{h_0}{0}$-conflict-free. Let $\eta$ be an integer such that that $20 \log\Delta \le \eta \le \frac{d}{100}$, and let $m \coloneqq \ceil{8e\Delta\add/d}$. 
    Then there exists a proper $(k+m)$-colouring $\sigma$ of $G$ which is $\threshold{V_{\le d}}{h_0/2}{\add}$-conflict-free.
\end{lemma}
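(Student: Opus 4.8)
The plan is to follow the same Rödl Nibble strategy as in \cref{lem:pcf-precoloured}, but to take advantage of the fact that we now start with $2h_0$ witnesses per small-degree vertex (via the notation $h_0$ playing the role of $2h$) and are only required to guarantee $h_0/2$ witnesses in the end. The crucial consequence is that we no longer need to preserve all witnesses of $\sigma_0$: we may destroy a constant fraction of them. This removes the costly constraint in \cref{lem:pcf-precoloured} where recolouring a vertex $u \in \W(v)$ had to be avoided, and it is precisely what will let us use only $m = \bigO{\Delta\eta/d}$ extra colours without the $(h+2)$ penalty factor.

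Concretely, I would set $p \coloneqq 1/\Delta$ (rather than $1/(h+2)\Delta$) and run essentially the same procedure: initialise sets $A_1,\dots,A_m$ by independent $p$-sampling, then in each round $i$ form $C_i \coloneqq A_i \setminus S_i$ where $S_i$ collects vertices whose recolouring would violate properness, together with the carried-over $\bigcup_{j=i_0}^{i-1}C_j$ controlled by the same \textbf{fail-safe}. The key structural difference is in defining $S_i$: since we are willing to lose witnesses, a vertex $u$ should be excluded from $C_i$ only when recolouring it would break \emph{properness} (i.e. $N(u)\cap A_i \neq \emptyset$) or when $u$ is needed as one of the few surviving witnesses; we no longer forbid $u$ from being recoloured just because $u\in\W(v)$ for some $v$. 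This relaxation is what shrinks $|\witnessneighborhood(u)\setminus N(v)|$ in the analogue of \eqref{eq:round-i} from $(h+1)\Delta$ down to $\bigO{\Delta}$, giving $\pr{\bX_v^i=1}\ge \frac{dp}{4e} = q$ with $p=1/\Delta$, hence $mq \ge 2\eta$ as before.

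The argument then splits into the same three pieces. First, the analogue of \cref{claim:reset}: the \textbf{fail-safe} is almost surely never triggered, because $2mdp = \bigo{\eta} \ll \delta/2 \le \deg(v)/2$ for $v\in V_{>d}$ by the hypothesis $\eta\le d/100$, so Chernoff (\cref{lem:chernoff2}\ref{it:right}) gives each $B_v$ probability $\le \Delta^{-7}$. Second, one must track witnesses of small-degree vertices: here I expect the main new bookkeeping, since a vertex $v\in V_{\le d}$ starts with $2h_0$... actually with its guaranteed $\min\{2h,\deg v\}$ witnesses and may lose some when neighbouring witnesses get recoloured. I would define a bad event $E_v'$ that $v$ retains fewer than $h_0/2$ of its original witnesses, and bound the number of destroyed witnesses by a binomial: each witness $u\in\W(v)$ is destroyed only if $u$ (or a neighbour creating a collision) enters some $\bC_i$, an event of probability $\bigO{mp}=\bigo{\eta/\Delta}$ per witness, so by \cref{lem:cond-binom} together with \cref{lem:binomial-tail-simple} the chance of losing more than half of the $\ge 2h_0$ witnesses is at most $\Delta^{-7}$ provided $\eta$ is small enough relative to $d$. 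Third, the creation of new witnesses for $v\in V_{>d}$ is identical to \cref{claim:witnesses}, yielding $\add$ fresh witnesses except with probability $\Delta^{-7}$.

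Finally I would close the argument via the Lovász Local Lemma exactly as before: all bad events $B_v$ (fail-safe), $E_v'$ (lost witnesses, for $v\in V_{\le d}$), and $E_v$ (insufficient new witnesses, for $v\in V_{>d}$) are determined within the distance-$3$ neighbourhood of $v$ under the $p$-sampling, each has probability $\le \Delta^{-7}$, and each is independent of all but $\le 2\Delta^6$ others, so $epd \le 2e\Delta^{-1} < 1$ and \cref{lem:lll} furnishes an outcome avoiding every bad event. On that outcome $\bsig$ is proper, every $v\in V_{\le d}$ keeps at least $h_0/2$ witnesses, and every $v\in V_{>d}$ has at least $\add$ witnesses, so $\bsig$ is $\threshold{V_{\le d}}{h_0/2}{\add}$-conflict-free. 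The main obstacle I anticipate is the second piece: getting a clean conditional lower bound on the survival of each individual witness that is uniform over the history $\bA_1,\dots,\bA_{i-1}$, so that \cref{lem:cond-binom} applies cleanly and the loss is genuinely dominated by a binomial tail — the dependence introduced by the \textbf{fail-safe} resets and by the overlap between different witnesses' neighbourhoods will need care to disentangle.
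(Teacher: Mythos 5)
Your proposal follows the same skeleton as the paper's proof: the identical nibble procedure with $S_i$ defined by properness alone (no witness-neighbourhoods), three families of bad events (fail-safe, lost witnesses for $V_{\le d}$, missing new witnesses for $V_{>d}$), and a closing application of \cref{lem:lll}. The gap is quantitative, and it is genuine because the lemma fixes all constants ($h_0,\eta \ge 20\log\Delta$, $\eta \le d/100$, $m = \ceil{8e\Delta\eta/d}$): with your choice $p = 1/\Delta$, none of your claimed $\Delta^{-7}$ bounds hold. First, the per-witness destruction probability is not $\bigo{\eta/\Delta}$ but $\Theta(\eta/d)$: each $u \in \W(v)$ lands in $\bA_1\cup\dots\cup\bA_m$ with probability up to $mp \approx 8e\eta/d$, which at $\eta = d/100$ is about $0.22$, a \emph{constant}. (Also note the budget: the hypothesis supplies $h_0$ witnesses, of which $h_0/2$ must survive — not $2h_0$; the factor $2$ lives in the corollary, where $h_0 = 2h$.) Losing $h_0/2$ witnesses is then only a constant-factor deviation for a binomial of mean $\approx 0.22\,h_0$; at $h_0 = 20\log\Delta$, Chernoff gives roughly $e^{-0.11 h_0} \le \Delta^{-2.2}$, while \cref{lem:binomial-tail-simple} is vacuous since $2emp \approx 1.18 > 1$. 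Second, with $p = 1/\Delta$ the survival factor is $(1-p)^{2\Delta-1} \approx e^{-2}$, so the creation rate is $q \approx dp/(2e^2)$, \emph{not} $dp/(4e)$; hence $mq \approx 1.5\eta < 2\eta$, and the lower tail $\pr{B(m,q) < \eta}$ degrades to roughly $\Delta^{-1.5}$. With bad-event probabilities of order $\Delta^{-2.2}$ and $\Delta^{-1.5}$, the Local Lemma condition fails against a dependency degree of $2\Delta^6$, or even $2\Delta^4$, so the final step collapses.

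The paper's proof closes exactly these holes by taking $p = 1/(2\Delta)$, which is calibrated twice over: the exponent $2\Delta-1$ equals $1/p - 1$, so $(1-p)^{2\Delta-1} > 1/e$, giving $q \ge dp/(2e)$, $mq \ge 2\eta$, and $\pr{B(m,q)<\eta} \le e^{-mq/8} \le \Delta^{-5}$ by \cref{lem:chernoff2}; and $2emp < e^{-1/2} < 1$, so \cref{lem:binomial-tail-simple} yields $\pr{E_v^-} \le (2emp)^{h_0/2} \le \Delta^{-5}$ and likewise for the fail-safe events. These $\Delta^{-5}$ bounds are essentially the best the hypotheses permit, and they suffice only because the dependency accounting is tightened in tandem: since $S_i$ involves only $N(u)$, the paper treats all bad events as determined by the samples within distance-$2$ neighbourhoods, giving dependency degree at most $2\Delta^4$ and $e\Delta^{-5}\cdot 2\Delta^4 < 1$. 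Your carried-over distance-$3$/$2\Delta^6$/$\Delta^{-7}$ bookkeeping from \cref{lem:pcf-precoloured} is thus both unattainable and unnecessary here. Finally, the obstacle you flag at the end is not one: since $\bC_i \subseteq \bA_i$, a witness $u \in \W(v)$ can be destroyed only if $u$ itself lies in $\bA_1\cup\dots\cup\bA_m$ (recolouring other vertices cannot hurt it, as all new colours are fresh and distinct from $\sigma_0(u)$), and the $\bA_i$ are fixed independent samples from the initialisation; so the number of lost witnesses is dominated by $B(mh_0,p)$ outright, with no interference from the fail-safe and no need for \cref{lem:cond-binom} in that part of the argument.
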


\begin{proof}
For $v \in V_{\le d}$, let $\W(v)$ be an arbitrary subset of $h_0$ witnesses of $v$ in $\sigma_0$ (here we use the fact that $\deg(v) \ge \delta \ge h_0$). We define $p \coloneqq \frac{1}{2\Delta}$. Since $\add \le d/100$, we easily verify that 
\begin{equation}\label{eq:mp2}
    2emp < 0.6 < e^{-1/2}.
\end{equation}

We execute the following procedure, which is identical to the one in the proof of \cref{lem:pcf-precoloured} except for the fact that we no longer ensure that witnesses in $\sigma_0$ remain witnesses in the final colouring.

\setlength{\interspacetitleruled}{0pt}%
\setlength{\algotitleheightrule}{0pt}%
\begin{algorithm}[H]
\textbf{Initialisation: }\For{$i \gets 1$ \KwTo $m$}{
    $A_i \gets \emptyset$\;
    \For{$u \in V(G)$}{
        Add $u$ to $A_i$ independently with probability $p$.\;
    }
}
$i_0\gets 1$\;
\For{$i \in [m]$}{
    $S_i \gets \sst{u \in V(G)}{N(u) \cap A_i \ne \emptyset}\ \cup\ \bigcup_{j = i_0}^{i-1} C_j$;

    $C_i \gets A_i \setminus S_i$.\;
    
    \textbf{fail-safe:} \If{there exists $v \in V_{>d}$ such that $|N(v) \cap \bigcup_{j = i_0}^{i} C_j| > \frac{\deg(v)}{2}$}{
        $i_0 \gets i+1$.\;
    }
}
\end{algorithm}\SetNlSty{texttt}{(}{)}

For $i \in [m]$, we respectively let $\bA_i$, $\bS_i$, $\bC_i$, and $\bi$ be the random values of the variables $A_i$, $S_i$, $C_i$, and $i_0$ at the end of the procedure. Let $\bsig$ be the colouring obtained from $\sigma_0$ after recolouring $\bC_i$ with the colour $k + i$ for each $i \in \{\bi, \dots, m\}$. As explained in the proof of \cref{lem:pcf-precoloured}, the colouring $\bsig$ is deterministically proper. For $v \in V_{>d}$, let $B_v$ be the (bad) event that more than $\deg(v)/2$ neighbours of $v$ belong in $\bA_1 \cup \dots \cup \bA_m$.

\begin{claim}\label{claim:reset2}
    For $v \in V_{>d}$, $\pr{B_v} \le \Delta^{-5}.$
\end{claim}

\begin{proof}
    Let $\bZ_v \coloneqq \sum_{i=1}^{m}|\bA_i \cap N(v)| \sim B(m \deg(v), p)$. By \cref{lem:binomial-tail-simple}, we obtain
    
    \[\pr{B_v} \le \pr{\bZ_v \ge \frac{\deg(v)}{2}}  \;\le\; (2emp)^{\deg(v)/2} < \Delta^{-5},\]
    where we have used \cref{eq:mp2} and the assumption $\deg(v) \ge \delta \ge 20\log\Delta$.
\end{proof} 

While we do not ensure deterministically that vertices of $\W(v)$ remain witnesses of $v$ in $\bsig$, for $v\in V_{\le d}$ we do expect that a majority of them will be.

\begin{fact}\label{fact:leftover}
    Let $v \in V_{\le d}$. Then $\W(v) \setminus (\bA_1 \cup \dots \cup \bA_m)$ is a set of witnesses of $v$ in $\sigma$.
\end{fact}

For $v\in V_{\le d}$, let $E_v^-$ be the (bad) event that more than $h_0/2$ vertices of $\W(v)$ belong in $\bA_1 \cup \dots \cup \bA_m$. Hence, if $E_v^-$ does not occur, $v$ still has at least $h_0/2$ witnesses in $\bsig$.

\begin{claim}\label{claim:leftover-witnesses}
    Let $v \in V_{\le d}$. Then $\pr{E_v^-} \le \Delta^{-5}$.
\end{claim}
\begin{proof}
Let $\bY_v \coloneqq \sum_{i=1}^{m} |\W(v) \cap A_i| \sim B(mh_0, p)$. By \cref{lem:binomial-tail-simple}, we obtain

\[\pr{E_v^-} \le \pr{\bY_v \ge h_0/2} \le \pth{2emp}^{h_0/2} < \Delta^{-5},\]
where we have used \cref{eq:mp2} and the assumption $h_0 \ge 20\log\Delta$.
\end{proof}

For $v \in V_{>d}$, let $\bX_v^i$ be the indicator variable for the event ``$|N(v) \cap \bC_i| = 1$''. The following fact is easily deduced from the disjointness of the sets $\bC_{\bi}, \dots, \bC_m$.

\begin{fact}\label{fact:keep-plus2}
    Let $v \in V_{>d}$. At the end of the procedure, $v$ has at least $\displaystyle \sum_{i=\bi}^m \bX_v^i$ witnesses in $\bsig$.
\end{fact}

For every $v\in V_{>d}$, let $\bX_v \coloneqq \sum_{i=1}^{m} \bX_v^i$. If $\bi = 1$ and $\bX_v \ge \add$, then $v$ has at least $\add$ witnesses in $\bsig$, as desired. Let $E_v$ be the (bad) event that $\bX_v < \add$.

\begin{claim}\label{claim:witnesses2}
    Let $v \in V_{>d}$. Then $\pr{E_v} \le \Delta^{-5}$.
\end{claim}

\begin{proof}
    Consider $i \in [m]$. Let us fix the realisation $\bA_1,\dots,\bA_{i-1}$ and the resulting value of $\bC_1,\dots,\bC_{i-1}$. Let $C'$ be the resulting value of the term ``$\bigcup_{j = i_0}^{i-1} C_j$'' at the start of round $i$. The distribution of $\bA_i$ is unaffected by this conditioning, i.e. every vertex of $v$ is still present in $\bA_i$ independently with probability $p$. Let $U \coloneqq N(v) \setminus C'$ and let $t \coloneqq |U|$. Because of the \textbf{fail-safe}, $C'$ contains at most $\deg(v)/2$ neighbours of $v$, therefore $t \ge \deg(v) - \frac{\deg(v)}{2} \ge \frac{d}{2}$.
    
    The probability that $N(v) \cap \bA_i = \{u\}$ with $u \in U$ is equal to $t p (1-p)^{\deg(v)-1} \ge \frac{dp}{2}(1-p)^{\Delta-1}$. Suppose such a vertex $u \in U$ exists, i.e. we are conditioning on the event ``$N(v) \cap \bA_i = \{u\}$ for some $u \in U$''. By independence of the random choices, the distribution of $\bA_i$ outside of $N(v)$ remains unaffected, i.e. every vertex $x \in V \setminus N(v)$ is present in $\bA_i$ independently with probability $p$. We now determine a lower bound on the probability that $u \in \bC_i$, which is equivalent to $u \notin \bS_i$ since $u \in \bA_i$. By definition of $U$, $u\notin C'$, so $u \notin \bS_i$ if and only $N(u)\cap \bA_i = \emptyset$. We already know that $N(v)\cap \bA_i = \{u\}$, therefore this happens with probability $(1-p)^{|N(u)\setminus N(v)|} \ge (1-p)^{\Delta}$. Therefore,

    \begin{equation}\label{eq:round-i-2}
        \pr{\bX_v^i = 1} \ge \frac{dp}{2}(1-p)^{2\Delta-1} > \frac{dp}{2e},
    \end{equation}
    where we have used the fact that $(1-\frac{1}{x})^{x-1} > \frac{1}{e}$ for all $x > 1$. Let $q \coloneqq \frac{dp}{2e}$. We have shown that $\pr{\bX_v^i = 1} > q$ regardless of the realisation of $\bA_1,\dots,\bA_{i-1}$, and so in particular, 

    \[\pr{\bX_v^i = 1 \midbar \bX_v^1,\dots,\bX_v^{i-1}} > q.\]
    
    By \cref{lem:cond-binom}, we obtain $\pr{\sum \bX_v^i < \add} \le \pr{B(m,q) < \add}$. We have defined $m$ such that $mq \ge 2\eta$, and since $\eta \ge 20\log\Delta$ we have $\frac{mq}{8} \ge 5\log\Delta$; by \cref{lem:chernoff2}\ref{it:left}, we conclude that $\pr{E_v} \le \pr{B(m,q) < \add} \le \Delta^{-5}$.
\end{proof}

The bad events $B_v$ and $E_v$ are associated to vertices of $V_{>d}$, while the bad events $E^-_v$ are associated to vertices of $V_{\le d}$. Each of these events
are determined by the realisation of $(\bA_1,\ldots,\bA_m)$ within the distance $2$ neighbourhood of their corresponding vertices, therefore these bad events are independent from all but at most $2\Delta^4$ other such bad events. By \cref{lem:lll}, with nonzero probability, none of these bad events occur. In that case, $\bi = 1$ and $\bX_v \ge \add$ for every $v \in V_{> d}$, while $|\W(v)\setminus (\bA_1\cup\dots\cup\bA_m)| \ge h_0/2$ for every $v \in V_{\le d}$. By \cref{fact:leftover} and \cref{fact:keep-plus2}, we conclude that the colouring $\bsig$ is $\threshold{V_{\le d}}{ h_0/2}{\add}$-conflict-free.
\end{proof}

\begin{cor}
    \label{cor:h-pcf2}
    Let $G$ be a graph of maximum degree $\Delta \ge 20000$ and minimum degree $\delta \ge 2000\log\Delta$. Let $h$ be an integer such that $20\log\Delta \le h \le \delta/100$. If $\delta \le\sqrt{h\Delta}$, then
    \[\pcf^h(G)\le\Delta + 17\sqrt{h\Delta}.\]
\end{cor}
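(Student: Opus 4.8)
The plan is to combine the two lemmas of this section in a two-step colouring and then optimise over the degree threshold $d$. First I would apply \cref{partial-pcf-led} with its parameter ``$h$'' set to our target $h$: since $h \le \delta/100$ we have $\delta \ge 16h$, so for any $d$ with $\delta \le d \le \Delta$ this produces a proper $\threshold{V_{\le d}}{2h}{0}$-conflict-free $(\Delta+3d)$-colouring $\sigma_0$, that is, a precolouring granting $2h$ witnesses to every vertex of degree at most $d$. Then I would feed $\sigma_0$ into \cref{lem:pcf-precoloured2} with $h_0 \coloneqq 2h$, $\eta \coloneqq h$, and the same $d$, which outputs a proper $\threshold{V_{\le d}}{h_0/2}{\eta}$-conflict-free colouring, i.e.\ $\threshold{V_{\le d}}{h}{h}$-conflict-free, using $m \coloneqq \ceil{8e\Delta h/d}$ additional colours. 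Since every degree is at least $\delta \ge 100h > h$, each vertex then has $\min\{h,\deg(v)\} = h$ witnesses, so this colouring is $h$-pcf and uses at most $\Delta + 3d + \ceil{8e\Delta h/d}$ colours.

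The resulting bound $\Delta + 3d + 8e\Delta h/d + 1$ is then minimised over $d$ by balancing the two $d$-dependent terms via AM--GM, i.e.\ by taking $d \coloneqq \ceil{\sqrt{8eh\Delta/3}}$, at which point $3d + 8e\Delta h/d \le 2\sqrt{24e\,h\Delta} + \bigo{1}$. Since $2\sqrt{24e} < 16.2$ and $\sqrt{h\Delta}$ is large (as $h \ge 20\log\Delta$ and $\Delta \ge 40000$), the additive $\bigo{1}$ error is absorbed, and one gets $\pcf^h(G) \le \Delta + 17\sqrt{h\Delta}$, as claimed.

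What remains is to check that this choice of $d$ satisfies all the hypotheses of the two lemmas, and the key observation making everything fit is that the regime constraints force $h$ to be very small relative to $\Delta$. Indeed, combining $100h \le \delta$ (from $h \le \delta/100$) with the assumption $\delta \le \sqrt{h\Delta}$ gives $100h \le \sqrt{h\Delta}$, hence $h \le \Delta/10000$. From this I would verify: (i) $d \le \Delta$, which holds since $8eh/3 \le \Delta$; (ii) $d \ge \delta$, since $\sqrt{8eh\Delta/3} > \sqrt{h\Delta} \ge \delta$; and (iii) the most delicate constraint $\eta = h \le d/100$, equivalently $d \ge 100h$, which follows because $\sqrt{8eh\Delta/3} \ge 100h$ reduces to $h \le 8e\Delta/30000$, itself a consequence of $h \le \Delta/10000$. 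The remaining conditions $h_0 = 2h \ge 20\log\Delta$, $\delta \ge h_0$, $\eta = h \ge 20\log\Delta$, and $\Delta \ge 20000$ are immediate from the hypotheses. I expect the main (and only) obstacle to be verifying inequality (iii): the slack there comes entirely from the combined assumptions $h \le \delta/100$ and $\delta \le \sqrt{h\Delta}$, and once $h \le \Delta/10000$ is established the rest is routine arithmetic.
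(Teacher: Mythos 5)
Your proposal is correct and follows essentially the same route as the paper: both apply \cref{partial-pcf-led} to get a $\threshold{V_{\le d}}{2h}{0}$-conflict-free $(\Delta+3d)$-colouring and then feed it into \cref{lem:pcf-precoloured2} with $h_0 = 2h$ and $\eta = h$, checking the same hypotheses (notably $h \le d/100$ via $h \le \delta/100 \le d/100$). The only difference is cosmetic: the paper simply takes $d = \ceil{3\sqrt{h\Delta}}$ rather than your AM--GM-optimal $d = \ceil{\sqrt{8eh\Delta/3}}$, and both choices yield the stated constant $17$.
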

\begin{proof}
    Let $d \coloneqq \ceil{3\sqrt{h\Delta}}$. By \cref{partial-pcf-led}, there exists a $(\Delta + 3d)$-colouring $\sigma_0$ that is proper and $\threshold{V_{\le d}}{2h}{0}$-conflict-free. We apply \cref{lem:pcf-precoloured2} with $k\coloneqq \Delta + 3d$, $h_0 \coloneqq 2h$ and $\add \coloneqq h$; the condition $\add \le d/100$ is verified by the assumptions $h \le \delta/100$ and $\delta \le d$. Then there exists an $h$-pcf $k$-colouring of $G$, with $k=\Delta + 3d + \ceil{8e\frac{\Delta}{d}h} \le \Delta + 17\sqrt{h\Delta}$.
\end{proof}

\begin{rk}
    In the statement of \cref{cor:h-pcf2}, we did not consider the case $\delta > \sqrt{h\Delta}$. In that case, \cref{thm:kamyczura} provides the upper bound $\pcf^h(G) \le \Delta + 30\frac{h\Delta}{\delta}$; observe that the second-order term is $\bigo{\sqrt{h\Delta}}$, and approaches $\bigo{h}$ as $\delta \rightarrow \Delta$.
\end{rk}

 \section{Discussion} 
\subsection{Tightness of the bounds}
As discussed in the introduction, the bound $\Delta + \bigo{\sqrt{h\Delta}}$ is unlikely to be tight. When $h=1$, we have a bound of the form $\Delta + O(\log \Delta)$, while when $h=2$ we have no better than $\Delta + \bigo{\sqrt{ \Delta \log\Delta}}$ with minimum degree $\Omega(\log \Delta)$. We expect the transition to be smoother between $h=1$ and larger values for $h$, as is suggested by \cref{conj:h-pcf}. At the other end of the spectrum, when $h$ gets close to $\delta$, it has been established in \cite{dai2023new} that a linear upper bound in terms of $\Delta$ is no longer possible, at least for regular graphs. This was formulated for $\odd^h(G)$, which we recall is a lower bound on $\pcf^h(G)$.

\begin{prop}[Dai, Ouyang, Pirot, 2024]
    For every even integer $\Delta \ge 2$ and $1\le t \Delta$, there is a $\Delta$-regular graph $G$ such that, letting $h\coloneqq \Delta+1-t$, one has
    \[ \odd^h(G) > \frac{1}{2}\frac{\Delta^2}{t+1}.\]
\end{prop}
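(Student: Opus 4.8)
The plan is to take $G$ to be the $q\times q$ \emph{rook's graph} $R_q$, with $q \coloneqq \Delta/2 + 1$: its vertices are the cells of a $q\times q$ grid, two cells being adjacent exactly when they lie in the same row or in the same column. Each cell is then adjacent to the $q-1$ other cells of its row and the $q-1$ other cells of its column, so $R_q$ is $2(q-1)=\Delta$-regular; since $\Delta$ is even, $q$ is a positive integer and this is well defined. The graph has $n \coloneqq q^2 = (\Delta/2+1)^2$ vertices, and because $\deg(v)=\Delta\ge h$ for every $v$, an $h$-odd colouring must exhibit $h=\Delta+1-t$ odd colours in each neighbourhood. I would fix an arbitrary $h$-odd colouring using $k$ colours, write $V_c$ for the class of colour $c$, and aim to prove $k > \frac{\Delta^2}{2(t+1)}$.

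Two structural features of $R_q$ drive the argument. First, every colour class is an independent set, i.e. a set of cells with pairwise distinct rows and columns; hence, for any cell $v$ and any colour $c$, the neighbourhood $N(v)$ (the row and the column of $v$) contains at most one $c$-cell in its row and at most one in its column, so \emph{each colour appears at most twice in $N(v)$}. Second, two non-adjacent cells $u=(r_1,c_1)$ and $w=(r_2,c_2)$ (so $r_1\ne r_2$ and $c_1\ne c_2$) have exactly two common neighbours, namely $(r_1,c_2)$ and $(r_2,c_1)$. The first feature is the crux: writing $d_v$ for the number of colours appearing exactly twice in $N(v)$, the colours seen an odd number of times are precisely the $\Delta-2d_v$ colours appearing once, so the $h$-odd condition $\Delta-2d_v\ge \Delta+1-t$ forces $d_v\le (t-1)/2$. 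This halving — rather than the naive bound $d_v\le t-1$ one gets from the repetition count alone — is exactly what produces the constant $\frac{1}{2(t+1)}$.

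I would then double count $T\coloneqq \sum_v d_v$, the total number of monochromatic pairs contained in neighbourhoods. On one hand, $T\le \frac{t-1}{2}\,q^2$ by the previous paragraph. On the other hand, summing over monochromatic pairs $\{u,w\}$ (which are non-adjacent, hence have exactly two common neighbours) gives $T = 2\sum_c \binom{|V_c|}{2}$, and convexity of $x\mapsto \binom{x}{2}$ together with $\sum_c |V_c| = q^2$ yields $\sum_c \binom{|V_c|}{2}\ge \frac{q^2(q^2-k)}{2k}$. Combining the two estimates gives $\frac{q^2(q^2-k)}{k}\le \frac{(t-1)q^2}{2}$, which rearranges to $k\ge \frac{2q^2}{t+1} > \frac{\Delta^2}{2(t+1)}$, using $q^2 = \Delta^2/4+\Delta+1 > \Delta^2/4$. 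Since the colouring was arbitrary, this yields $\odd^h(R_q) > \frac{\Delta^2}{2(t+1)}$.

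The main obstacle is the design of the construction rather than the counting. One needs a $\Delta$-regular graph whose neighbourhoods overlap heavily enough to force many colours, yet in which no colour can appear three or more times in a single neighbourhood (this is what upgrades the repetition budget from $t-1$ to $(t-1)/2$), and whose non-adjacent pairs all share the same small number of common neighbours so that the double count is exact. The rook's graph meets all three demands at once, and is available for every even $\Delta$; once it is in hand, the remaining convexity estimate is routine.
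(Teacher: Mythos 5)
Your proof is correct, and it is worth noting that the paper itself does not prove this proposition: it is quoted from the cited work of Dai, Ouyang and Pirot, so there is no in-paper argument to compare against. Your rook's-graph construction is a complete, self-contained proof. The three structural facts you need all check out: $R_q$ with $q=\Delta/2+1$ is $\Delta$-regular (here the evenness of $\Delta$ is used); since each colour class is a partial permutation matrix, a colour meets the row of $v$ at most once and the column of $v$ at most once, so it appears at most twice in $N(v)$, whence the number of odd colours at $v$ is exactly $\Delta-2d_v$ and the $h$-odd condition forces $d_v\le (t-1)/2$; and a monochromatic (hence non-adjacent, row- and column-distinct) pair has exactly two common neighbours, so $\sum_v d_v = 2\sum_c\binom{|V_c|}{2}$. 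The convexity step $\sum_c\binom{|V_c|}{2}\ge \frac{q^2(q^2-k)}{2k}$ and the rearrangement to $k\ge \frac{2q^2}{t+1}$ are valid for all $k>0$ (the inequality is vacuously true if $k\ge q^2$, but then the conclusion holds anyway since $t\ge 1$), and $2q^2=\Delta^2/2+2\Delta+2>\Delta^2/2$ gives the required strict inequality $k>\frac{\Delta^2}{2(t+1)}$. One small sanity check confirming the argument: at $t=1$ your bound forces $k\ge q^2$, which is consistent, since requiring all $\Delta$ neighbours to carry odd colours in $R_q$ amounts to a distance-$2$ colouring and $R_q^2$ is complete.
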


The authors have also established a matching upper bound for $\odd^h(G)$ in that regime.

\begin{thm}[Dai, Ouyang, Pirot, 2024]
    Let $G$ be a $\Delta$-regular graph, let $\ceil{2(\ln \Delta + \ln \ln \Delta + 3)} \le t \le \Delta$ be a given integer, and let $h \coloneqq \Delta+1-t$. Then
    \[ \odd^h(G) = \bigO{\frac{\Delta^2}{t}}.\]
\end{thm}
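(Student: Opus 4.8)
The plan is to produce the colouring at random and to control every neighbourhood with the Lovász Local Lemma, after first recasting the odd condition in a purely combinatorial form. Fix a proper colouring $\sigma$ and a vertex $v$. Since $G$ is $\Delta$-regular, $N(v)$ has exactly $\Delta$ vertices, partitioned into colour classes of sizes $c_1, c_2, \dots$ with $\sum_i c_i = \Delta$. Writing $O_v$ for the number of odd colours of $v$ (the number of odd $c_i$), a direct count gives $\Delta - O_v = \sum_i (c_i - \epsilon_i)$, where $\epsilon_i = 1$ if $c_i$ is odd and $0$ otherwise; since the terms with $c_i = 1$ vanish, this is at most $\sum_{c_i \ge 2} c_i =: R_v$, the number of \emph{repeated} neighbours of $v$ (those lying in a colour class of size $\ge 2$). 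Hence $R_v \le t - 1$ forces $O_v \ge \Delta + 1 - t = h$, so it suffices to find a proper colouring in which every vertex has at most $t-1$ repeated neighbours.

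Next I would colour each vertex independently and uniformly from a palette $[k]$ with $k \coloneqq C\Delta^2/t$ for a large absolute constant $C$, and set up two families of bad events: for each edge $uv$, the event $A_{uv}$ that $\sigma(u) = \sigma(v)$, with $\pr{A_{uv}} = 1/k$; and for each vertex $v$, the event $B_v$ that $R_v \ge t$. Now $A_{uv}$ depends only on the colours of $u,v$, while $B_v$ depends only on the colours in $N(v)$, so each $B_v$ is mutually independent of all bad events except those whose defining vertex set meets $N[v]$ — namely the $\bigO{\Delta^2}$ events $A_{xy}$ with an endpoint in $N(v)$ and $B_w$ with $N(w) \cap N(v) \ne \emptyset$ — whereas each $A_{uv}$ has only $\bigO{\Delta}$ dependencies. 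A (asymmetric) application of the Lovász Local Lemma, cf. \cref{lem:lll}, then produces a colouring avoiding all bad events; by the previous paragraph this is the desired $h$-odd colouring using $k = \bigO{\Delta^2/t}$ colours.

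The crux is the tail estimate for $B_v$, which must be exponentially small in $t$ \emph{despite} using only $\bigO{\Delta^2/t}$ colours. The naive route — bounding the number $M_v$ of monochromatic pairs in $N(v)$ and extracting vertex-disjoint pairs from it — is too weak, because a single heavily used colour of multiplicity $c$ produces $\binom{c}{2} \approx c^2/2$ pairs but only $\floor{c/2}$ \emph{disjoint} ones, so one recovers a matching of size only $\sqrt{M_v}$, forcing far too many colours. Counting repeated \emph{vertices} instead repairs this: inside each colour class of size $c_i \ge 2$ one greedily selects $\floor{c_i/2} \ge c_i/3$ disjoint monochromatic pairs, so $R_v \ge t$ guarantees a monochromatic matching of size $s \coloneqq \ceil{t/3}$ (the extremal case being classes of size three). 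A union bound over all matchings of size $s$, whose pairs are vertex-disjoint and hence have mutually independent colours, yields
\[
\pr{B_v} \;\le\; \frac{1}{s!}\pth{\frac{\Delta^2}{2}}^{s} k^{-s} \;\le\; \pth{\frac{e\Delta^2}{2ks}}^{s} \;=\; \pth{\frac{3e}{2C}}^{t/3}.
\]

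Finally, taking $C$ a sufficiently large absolute constant drives the base $3e/(2C)$ below any prescribed threshold, so that $\pr{B_v}\cdot e\cdot\bigO{\Delta^2} \le 1$ holds as soon as $t$ exceeds a quantity of order $\ln\Delta$; a careful accounting of the $\tfrac{1}{s!}$ factor via Stirling and of the exact dependency count sharpens this to the stated lower bound on $t$, while the small probability of the properness events $A_{uv}$ is absorbed trivially. The main obstacle, as indicated, is precisely this tail bound: everything hinges on measuring repeated vertices $R_v$ rather than monochromatic pairs $M_v$, which is what turns the matching exponent from $\bigOmega{\sqrt{t}}$ into $\bigOmega{t}$ and thereby keeps the palette at $\bigO{\Delta^2/t}$, matching the lower bound of the preceding proposition.
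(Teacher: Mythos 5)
This theorem is quoted in the Discussion section from the cited Dai--Ouyang--Pirot paper; the present paper contains no proof of it, so your proposal can only be judged on its own merits and against the paper's toolkit. Your combinatorial core is correct and is the right idea: the reduction $O_v \ge \Delta - R_v$ (classes of size $1$ contribute $0$ to $\sum_i (c_i - \epsilon_i)$), the extraction of $\lfloor c_i/2 \rfloor \ge c_i/3$ disjoint monochromatic pairs per class so that $R_v \ge t$ yields a monochromatic matching of size $\lceil t/3 \rceil$, and the union bound over matchings giving $\pr{B_v} \le \pth{3e/(2C)}^{t/3}$ are all sound, and this probability does beat the $\Theta(\Delta^2)$ dependency degree precisely when $t \gtrsim 2\log\Delta$, matching the stated threshold.

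The gap is the Local Lemma step. \cref{lem:lll}, which you cite, is the \emph{symmetric} version, and it cannot handle the two event families together: the maximum dependency degree is $\Theta(\Delta^2)$ (forced by the $B_v$'s), while the properness events have probability $1/k = t/(C\Delta^2)$, so $epd = \Theta(t/C)$, which exceeds $1$ as soon as $t$ exceeds a constant --- i.e.\ on essentially the whole range $t \le \Delta$. Your closing remark that the properness events are ``absorbed trivially'' is therefore exactly backwards: they are the one delicate point. The asymmetric (general) LLL, which you name but neither state nor verify, does rescue the argument, but the verification is not routine: each $B_v$ has $\sim \Delta^2$ properness neighbours, so with any reasonable weight $x_A = \Theta(1/k)$ on those events the product $\prod(1-x_A)$ appearing in the condition for $B_v$ is $e^{-\Theta(t/C)}$, which decays exponentially in $t$; one must therefore choose the weight of $B_v$ itself exponentially small in $t$, and the condition closes only because $\pr{B_v} \le \pth{3e/(2C)}^{t/3}$ decays at a strictly faster exponential rate once $C$ is large. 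You should either carry out this weighted verification explicitly, or sidestep properness altogether by sampling a uniformly random \emph{proper} colouring and applying the lopsided \cref{lem:llll}, as the paper does in \cref{partial-pcf-led} --- though in that setting the exact independence of disjoint pairs used in your union bound is lost and must be replaced, e.g.\ by the resampling device together with \cref{lem:cond-binom}.
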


One could wonder whether a similar upper bound holds for $\pcf^h(G)$ in that same regime. This is indeed the case, as it turns out that, when $h$ is sufficiently close to $\Delta$, then $\odd^h(G)$ and $\pcf^h(G)$ have a similar behaviour. This follows from the following observation.

\begin{prop}
    For every graph $G$ of maximum degree $\Delta$, and every integer $t < \frac{2\Delta}{3}$,
    \[ \pcf^{\Delta - 3t/2}(G) \le \odd^{\Delta-t}(G).\]
\end{prop}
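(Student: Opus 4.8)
The plan is to prove the inequality at the level of a single colouring: I will show that if $\sigma$ is \emph{any} proper colouring of $G$ that is $(\Delta - t)$-odd, then that very same colouring $\sigma$ is already $(\Delta - 3t/2)$-pcf. Taking $\sigma$ to be an optimal $(\Delta-t)$-odd colouring, which uses $\odd^{\Delta - t}(G)$ colours, then immediately yields $\pcf^{\Delta - 3t/2}(G) \le \odd^{\Delta - t}(G)$. The role of the hypothesis $t < 2\Delta/3$ is merely to ensure that the target threshold $\Delta - 3t/2$ is positive, so that the statement is meaningful.

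The argument is entirely local, carried out at a fixed vertex $v$ with $d \coloneqq \deg(v)$. For a colour $c$ appearing in $N(v)$, write $m_c$ for its multiplicity in $N(v)$, and partition the colours present in $N(v)$ into three groups: the \emph{solitary} ones with $m_c = 1$ (say $s$ of them), the odd ones with $m_c \ge 3$ (say $o_3$ of them), and the even ones with $m_c \ge 2$ (say $e$ of them). The number of colours appearing an odd number of times in $N(v)$ is then $o = s + o_3$, and it is precisely $o$ that is controlled by the odd-colouring hypothesis, namely $o \ge \min\{\Delta - t, d\}$.

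The key step is a counting inequality coming from $\sum_c m_c = d$. Since solitary colours contribute $1$ each, odd colours of multiplicity $\ge 3$ contribute at least $3$ each, and even colours at least $2$ each, substituting $o_3 = o - s$ gives
\[ d \;\ge\; s + 3 o_3 + 2e \;\ge\; s + 3(o - s) \;=\; 3o - 2s, \]
which rearranges to $s \ge (3o - d)/2$. This is the crux of the whole proof: although solitary colours form a subset of the odd colours (so the trivial bound $s \le o$ points in the wrong direction), the degree budget $d \le \Delta$ forces all but a controlled number of odd colours to be solitary.

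It then remains to feed in $o \ge \min\{\Delta - t, d\}$ and split into two cases. If $d \ge \Delta - t$, then $o \ge \Delta - t$, and combined with $d \le \Delta$ the displayed inequality yields $s \ge (3(\Delta - t) - \Delta)/2 = \Delta - 3t/2$, which equals $\min\{\Delta - 3t/2, d\}$ since $d \ge \Delta - t > \Delta - 3t/2$. If instead $d < \Delta - t$, the hypothesis reads $o \ge d$; as $o$ cannot exceed the number of distinct colours in $N(v)$, which is at most $d$, we must have $o = d$, which forces every colour of $N(v)$ to be solitary, so $s = d \ge \min\{\Delta - 3t/2, d\}$. In both cases $v$ has at least $\min\{\Delta - 3t/2, \deg(v)\}$ solitary colours, as required. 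I expect the only delicate point to be the counting inequality itself, precisely because the naive containment of solitary colours in odd colours is unhelpful and one must instead exploit the total-degree constraint; the subsequent case analysis is routine.
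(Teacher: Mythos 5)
Your proof is correct and takes essentially the same route as the paper's: your counting inequality $d \ge 3o - 2s$, i.e.\ $s \ge (3o-d)/2$, is exactly the paper's double count, which bounds the number $x = o - s$ of non-solitary odd colours by $t/2$ via $o \le \Delta - 2x$ and then concludes $s \ge (\Delta - t) - t/2 = \Delta - 3t/2$. The only difference is that you explicitly handle the low-degree case $\deg(v) < \Delta - t$, which the paper leaves implicit; that is a matter of completeness, not of method.
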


\begin{proof}
    Let $h \coloneqq \Delta-t$, and let $\sigma$ be any $h$-odd colouring of $G$. 
    Let $v\in V(G)$; we say that $c\in \sigma(N(v))$ is an \emph{odd colour of $v$ in $\sigma$} if it has an odd number of occurrences in $N(v)$. 
    Let $x\ge 0$ be the number of odd colours of $v$ that have $> 1$ occurrences in $N(v)$. There can be at most $\Delta-3x$ other (odd) colours in $N(v)$, hence $h\le \Delta - 2x$. 
    We conclude that $x \le \frac{\Delta-h}{2} = t/2$, and so $v$ has at least $\Delta - 3t/2$ solitary colours in $\sigma$. This holds for every $v\in V(G)$, so $\sigma$ is $(\Delta - 3t/2)$-conflict-free.
\end{proof}

\subsection{Algorithmic aspects}

We note that the proof of \Cref{lem:partial-pcf} is constructive with the description of a greedy algorithm that returns the desired solution. 
While the statements of \Cref{lem:pcf-precoloured} and \Cref{lem:pcf-precoloured2} are purely existential, the only non-constructive part in their respective proofs is the application of \Cref{lem:lll}. If one replaces the latter with the following algorithmic version of the Lov\'asz Local Lemma due to Moser and Tardos~\cite{MoTa10}, then both these results can be obtained constructively with a randomised algorithm of expected polynomial time.

\begin{lemma}[Algorithmic Lov\'{a}sz Local Lemma]\label{lem:alll}
Let $\mathcal{B} = \{B_1, \ldots, B_n\}$ be a finite set of random (bad) events that are all determined by a finite set of random independent variables $\mathcal{X}$, and all of which happen with probability at most $p$. Suppose that, for every $i\in [n]$, $B_i$ is independent from all but at most $d$ other events.

If $epd\le 1$, then there exists a valuation of the variables in $\mathcal{X}$ such that no bad event in $\mathcal{B}$ happens, and such a valuation can be constructed with a randomised algorithm of expected runtime polynomial in $n$ and $|\mathcal{X}|$.
\end{lemma}

Indeed, in the proofs of \Cref{lem:pcf-precoloured} and \Cref{lem:pcf-precoloured2}, the bad events are all determined by the set of independent random variables $\mathcal{X} \coloneqq \{ \mathds{1}_{v \in A_i} : i \in [m], v\in V(G)\}$.
We conclude that one can obtain the colouring promised by \Cref{thm:pcf-upper} in expected polynomial time. 

In contrast, the proof of \Cref{partial-pcf-led} relies on the lopsided version of LLL (\Cref{lem:llll}) which seems to be much harder to prove constructively (see \cite{HaVo20} for a partial result in that direction), and more crucially on a much more complex random object, namely a uniformly random proper colouring $\bsig$ of the graph $G$. Sampling $\bsig$ --- or a good enough approximation thereof --- in polynomial time is by itself a notoriously hard open problem. If one wishes to make that proof constructive, the most promising strategy in our opinion would be to apply directly the Entropy Compression Method used by Moser and Tardos in their proof of \Cref{lem:alll} on the following random algorithm, which we conjecture should return the desired colouring in expected polynomial time.

\medskip
\setlength{\interspacetitleruled}{0pt}%
\setlength{\algotitleheightrule}{0pt}%
\begin{algorithm}[H]
Fix an arbitrary ordering on $V(G)$\;
$k \gets \Delta + 3d$\;
$\sigma \gets$ any proper $k$-colouring of $G$ (constructed greedily)\;
\While{there is a vertex $v$ of degree at most $d$ with less than $2h$ witness in $\sigma$}{
    \ForEach{colour class $C$ in $N(v)$}{
        \ForEach{$u \in C \setminus \min C$}{
             $\sigma(u) \gets$ uniformly random colour from $[k] \setminus \sigma(N(u))$.
        }
    }
}
\Return{$\sigma$}
\end{algorithm}\SetNlSty{texttt}{(}{)}

\medskip
Since the algorithmic aspects of our proofs are not the focus of the paper, we leave that for future work.

\bibliographystyle{abbrv}
\bibliography{reference}

@article{HaVo20,
  title={An algorithmic proof of the Lov{\'a}sz local lemma via resampling oracles},
  author={Harvey, Nicholas JA and Vondr{\'a}k, Jan},
  journal={SIAM Journal on Computing},
  volume={49},
  number={2},
  pages={394--428},
  year={2020},
  publisher={SIAM}
}

@article{MoTa10,
  title={A constructive proof of the general {Lov{\'a}sz} local lemma},
  author={Moser, Robin A and Tardos, G{\'a}bor},
  journal={Journal of the ACM (JACM)},
  volume={57},
  number={2},
  pages={1--15},
  year={2010},
  publisher={ACM New York, NY, USA}
}

@article{LiRe24+,
  title={Peaceful Colourings},
  author={Liu, Chun-Hung and Reed, Bruce},
  journal={arXiv preprint arXiv:2402.09762},
  year={2024}
}

@article{CrLi24,
author = {Cranston, Daniel W. and Liu, Chun-Hung},
title = {Proper Conflict-Free Coloring of Graphs with Large Maximum Degree},
journal = {SIAM Journal on Discrete Mathematics},
volume = {38},
number = {4},
pages = {3004-3027},
year = {2024},
doi = {10.1137/23M1563281},

URL = {https://doi.org/10.1137/23M1563281}
}

@article{Ber19,
  title={The {Johansson-Molloy} theorem for {DP}-coloring},
  author={Bernshteyn, Anton},
  journal={Random Structures \& Algorithms},
  volume={54},
  number={4},
  pages={653--664},
  year={2019},
  publisher={Wiley Online Library}
}

@article{CPS22+,
  title={Remarks on odd colorings of graphs},
  author={Caro, Yair and Petru{\v{s}}evski, Mirko and {\v{S}}krekovski, Riste},
  journal={Discrete Applied Mathematics},
  volume={321},
  pages={392--401},
  year={2022},
  publisher={Elsevier}
}

@article{caro2023remarks,
  title={Remarks on proper conflict-free colorings of graphs},
  author={Caro, Yair and Petru{\v{s}}evski, Mirko and {\v{S}}krekovski, Riste},
  journal={Discrete mathematics},
  volume={346},
  number={2},
  pages={113221},
  year={2023},
  publisher={Elsevier}
}

@article{CCKP22+,
  title={Odd coloring of sparse graphs and planar graphs},
  author={Cho, Eun-Kyung and Choi, Ilkyoo and Kwon, Hyemin and Park, Boram},
  journal={Discrete Mathematics},
  volume={346},
  number={5},
  pages={113305},
  year={2023},
  publisher={Elsevier}
}

@article{fabrici2023proper,
  title={Proper conflict-free and unique-maximum colorings of planar graphs with respect to neighborhoods},
  author={Fabrici, Igor and Lu{\v{z}}ar, Borut and Rindo{\v{s}}ov{\'a}, Simona and Sot{\'a}k, Roman},
  journal={Discrete Applied Mathematics},
  volume={324},
  pages={80--92},
  year={2023},
  publisher={Elsevier}
}

@article{liu2024asymptotically,
  title={Asymptotically Optimal Proper Conflict-Free Coloring},
  author={Liu, Chun-Hung and Reed, Bruce},
  journal={Random Structures \& Algorithms},
  volume={66},
  number={3},
  pages={e21285},
  year={2025},
  publisher={Wiley Online Library}
}

@article{dai2023new,
  title={New bounds for odd colourings of graphs},
  author={Dai, Tianjiao and Ouyang, Qiancheng and Pirot, Fran{\c{c}}ois},
  journal={Electronic Journal of Combinatorics},
  volume={31},
  number={Issue 4},
  year={2024},
}

@article{kamyczura2024conflict,
  title={On conflict-free proper colourings of graphs without small degree vertices},
  author={Kamyczura, Mateusz and Przyby{\l}o, Jakub},
  journal={Discrete Mathematics},
  volume={347},
  number={1},
  pages={113712},
  year={2024},
  publisher={Elsevier}
}

@article{cho2025brooks,
  title={Brooks-type theorems for relaxations of square colorings},
  author={Cho, Eun-Kyung and Choi, Ilkyoo and Kwon, Hyemin and Park, Boram},
  journal={Discrete Mathematics},
  volume={348},
  number={1},
  pages={114233},
  year={2025},
  publisher={Elsevier}
}

@article{petruvsevski2022colorings,
  title={Colorings with neighborhood parity condition},
  author={Petru{\v{s}}evski, Mirko and {\v{S}}krekovski, Riste},
  journal={Discrete Applied Mathematics},
  volume={321},
  pages={385--391},
  year={2022},
  publisher={Elsevier}
}

@article{shearer1985problem,
  title={On a problem of {Spencer}},
  author={Shearer, James B.},
  journal={Combinatorica},
  volume={5},
  pages={241--245},
  year={1985},
  publisher={Springer}
}

\end{document}